\newtheorem{thm}{Theorem}[section]
\newtheorem{lem}[thm]{Lemma}
\newtheorem{prop}[thm]{Proposition}
\theoremstyle{definition}
\newtheorem{defi}{Definition}[section]
\newtheorem{re}{Remark}[section]
\numberwithin{equation}{section}
 \newcommand{\be}{\begin{equation}}
 \newcommand{\ee}{\end{equation}}
 \newcommand{\bes}{\begin{eqnarray}}
 \newcommand{\ees}{\end{eqnarray}}
 \newcommand{\bess}{\begin{eqnarray*}}
 \newcommand{\eess}{\end{eqnarray*}}
\begin{document}
\title[3D density-dependent incompressible Boussinesq system]
      {The decay and stability of solutions for the 3D density-dependent incompressible Boussinesq system}%
\author[Liu]{Xiaopan Liu}%
\address{Department of Mathematics, Henan Normal University, Xinxiang 453000,   PR China.}
\email{liuxiaopan112@126.com}

\author[Zhang]{Qingshan Zhang }%
\address{Department of Mathematics, Henan Institute of Science and Technology, Xinxiang 453003, PR China.}
\email{qingshan11@yeah.net}

\thanks{This work is supported by China Postdoctoral Science Foundation (No. 2018M630824) and key teacher in Colleges and Universities training plan Henan province (No. 2020GGJS164).}

\subjclass[2010]{35A01, 35B35, 35B40, 76D05.}%
\keywords{density-dependent, Boussinesq system, decay, stability.}


\begin{abstract}
This paper deals with stability and the large-time decay to any given global smooth solutions of the 3D density-dependent incompressible
Boussinesq system. The decay rate for solutions of the corresponding Cauchy problem is obtained in this work. With the aid of this decay rate,
it is shown that a small perturbation of initial data $(\overline{a}_0,\overline{\theta}_0, \overline{u}_0)$ still generates a global smooth
solution to the density-dependent Boussinesq system, and this solution keeps close to the reference solution.
\end{abstract}
\maketitle

\section{Introduction }
\setcounter{equation}{0}
\setlength{\baselineskip}{17pt}{\setlength\arraycolsep{2pt}
The Boussinesq system describes the movement of incompressible fluid under the influence of gravitational forces, we refer to \cite{P} for more applications in fluid mechanics. In the three-dimensional case, the inhomogeneous incompressible Boussinesq system is of the form
\begin{eqnarray}\label{sys:Boussinesq}
\left\{\begin{array}{lll}
 \medskip
\partial_t \rho + u\cdot\nabla \rho=0, \qquad (t,x)\in \mathbb{R^+}\times\mathbb{R}^3,\\
 \medskip
\rho\partial_t \theta+ \rho(u\cdot\nabla \theta)-\Delta\theta=0,\\
 \medskip
\rho\partial_t u+ \rho(u\cdot\nabla u)-\Delta u+ \nabla\Pi=\rho \theta e_3,\\
 \medskip
\mathrm{div} u=0,
\end{array}\right.
\end{eqnarray}
where the unknown $\rho=\rho(x,t), u=u(x,t)$ denote the density and velocity vector-field separately, and $\theta=\theta(x,t)$ is a
scalar quantity such as the concentration of a chemical substance or the temperature variation in a gravity field, in which case
$\rho\theta e_3$ with $e_3=(0,0,1)$ represents the buoyancy force and the gradient of the pressure $\nabla\Pi$ is the Lagrangian multiplier associated to the divergence free constraint over the velocity.

The main result of this paper concerns with the large-time decay and stability of large global smooth solutions for the inhomogeneous Boussinesq system (\ref{sys:Boussinesq}) under suitable small perturbations. As far as I know, there are only some result about homogeneous Boussinesq system, for example\cite{L-G,L-L} and the references therein. As a subsystem of (\ref{sys:Boussinesq}) the homogeneous Navier-Stokes equations have been investigated in the past few years, global stability of solutions can be founded in \cite{GIP,GZ,PRS}. For the inhomogeneous Navier-Stokes equations, when the initial density is close enough to a positive constant, Danchin in \cite{Dan3} proved that the system has a unique local-in-time solution. This result is improved by Abidi in \cite{A}, where he prove that the inhomogeneous Navier-Stokes system has a unique global solution under appropriate assumptions on the initial data. When the initial density is close enough or not to a positive constant, more about the global and uniqueness solutions for the inhomogeneous Navier-Stokes system can be found in \cite{AGZ1,AGZ2,AP,B,CPZ,Pa-Z,ZY}. The large-time decay and stability to any given global smooth solutions of the 3D incompressible inhomogeneous Navier-Stokes equations can be found in \cite{AGZ,AZ}.

The main motivation of studying the stability of global strong solutions for Boussinesq equations is that: When the global
existence problem of smooth solutions is not completely solved, but one has global existence of solutions presenting certain type of
symmetry \cite{Leonardi-Malek-Necas-Pokorny-1999,Shirota-Yanagisawa-1994,WWL}, this kind of stability becomes particularly interesting because it
provides a non-symmetric large strong global solution as a small perturbation of a symmetric one. This is the case of the Navier-Stokes equations that have global large strong solutions with axial, rotational, and helical symmetry, see for example \cite{AGZ,GZ,PRS}. Recently, there are some results concerning global well-posedness for the Boussinesq system (\ref{sys:Boussinesq}) with large axisymmetric data in three dimension, see \cite{FLZ,Hmidi-Rousset-2010,Hmidi-Rousset-2011}. It is natural to consider the stability of global strong solutions for the Boussinesq system (\ref{sys:Boussinesq}).

When the solution is small and $\rho=1$, there are some stability results. Ferreira and Villamizar-Roa give a class of stable steady solutions in
weak-$L^p$ spaces, in the sense that they only assume that the stable steady solution belongs to scaling invariant class $L^{(n,\infty)}_{\sigma}\times L^{(n,\infty)}$ \cite{FV1}. The authors in \cite{FF} investigated well-posedness of mild solution and existence of self-similar ones in the framework of Morrey space. We point out that these small global solutions in weak-$L^p$ and Morrey spaces may be large (even unbounded) in the classical norms $L^2,\ H^1$ and in Besov spaces with positive regularity. More results about the stability of small global solutions for the Boussinesq system can be found in \cite{H2,H3,M,QZ} and the references therein.

The goal of the present paper is to give the decay and stability of solutions for system (\ref{sys:Boussinesq}). Let $a=\frac{1}{\rho}-1$, system (\ref{sys:Boussinesq}) can be equivalently reformulated as
\begin{eqnarray}\label{sys:a-Boussinesq}
\left\{\begin{array}{lll}
 \medskip
\partial_t a + u\cdot\nabla a=0, \qquad (t,x)\in \mathbb{R}^+\times\mathbb{R}^3,\\
 \medskip
\partial_t \theta+ (u\cdot\nabla \theta)-(1+a)\Delta\theta=0,\\
 \medskip
\partial_t u+ (u\cdot\nabla u)-(1+a)(\Delta u-\nabla\Pi)=\theta e_3,\\
 \medskip
\mathrm{div} u=0.
\end{array}\right.
\end{eqnarray}
Assume $(a,\theta,u)$ is a smooth global solution of the above system, we have
$$
a\Delta \theta=\nabla(a\nabla\theta)-\nabla a\nabla\theta,
$$
then the inhomogeneous incompressible Boussinesq system (\ref{sys:a-Boussinesq}) can be rewritten as follows
\begin{eqnarray}\label{rewrite-boussinesq}
\left\{\begin{array}{lll}
 \medskip
\partial_t a + u\cdot\nabla a=0, \qquad (t,x)\in \mathbb{R^+}\times\mathbb{R}^3,\\
 \medskip
\partial_t \theta+ (u\cdot\nabla \theta)-\Delta\theta-\kappa\nabla(a\nabla\theta)+\nu\nabla a\nabla\theta=0,\\
 \medskip
\partial_t u+ (u\cdot\nabla u)-(1+a)\Delta u+ \nabla\Pi=\theta e_3,\\
 \medskip
\mathrm{div} u=0,
 \end{array}\right.
\end{eqnarray}
where $\kappa=\nu=1$. In what follows, we mainly prove the decay rate and stability of global solutions for (\ref{rewrite-boussinesq}) with $\kappa=1$,  $\nu=0$. Our main results describe the decay of the solution as follows. We assume that the solutions are global existence. We obtain the decay and  boundedness of the global solutions in the following theorem.

\begin{thm}\label{main-decay-bounded}
Let $m\leq a_0\leq M$, $(a_0,\theta_0, u_0)\in{B}_{2,1}^{\frac{3}{2}}\times L^2\times L^2_{\sigma}$, there exist absolute constants $\varepsilon_0>0$ and $M_1>0$ such that
$$
\|a_0\|_{\dot{B}_{2,1}^{\frac{3}{2}}}<\eta_0,
$$
and
$$
\int_0^\infty\|\nabla u\|_{\infty}d\tau<M_1\label{u-satisfy}.
$$
In addition, assume the initial data $(a_0,\theta_0,u_0)$ satisfy $(\theta_0,u_0)\in (H^1)^2$, $\theta_0\in L^1\cap L^1_1$ and $\int\theta_0dx=0$, there exists absolute constants $\varepsilon_0>0$ such that
\begin{eqnarray*}
\|\theta_0\|_1<\varepsilon_0.
\end{eqnarray*}
Then there hold:\\
{\rm{(a)}} The weak solution constructed in Proposition \ref{1.1} satisfies,
\begin{eqnarray*}
\|\theta(t)\|_2^2\leq C(1+t)^{-\frac{5}{2}},\quad \|\nabla\theta(t)\|_2^2\leq C(1+t)^{-\frac{7}{2}},
\end{eqnarray*}
and
\begin{eqnarray*}
 \|u(t)\|_2^2\leq C,\quad t>0
\end{eqnarray*}
for some $C>0$.
{\rm{(b)}} The above solution is uniformly bounded, and satisfy
\begin{eqnarray*}
&& a\in L^\infty([0,\infty); W^{1,3}(\mathbb{R}^3))\cap L^\infty([0,\infty); L^2(\mathbb{R}^3)),\\
&&\theta\in L^\infty([0,\infty); H^1(\mathbb{R}^3))\cap L_{loc}^2(\mathbb{R}^+{\dot{B}}^{2}_{2,2}(\mathbb{R}^3)),\\
&&u\in L^\infty([0,\infty);H^1(\mathbb{R}^3))\cap L_{loc}^2(\mathbb{R}^+; {\dot{B}}^{2}_{2,2}(\mathbb{R}^3)),\\
&&\nabla \Pi\in L_{loc}^2(\mathbb{R}^+;{\dot{B}}^{1}_{2,2}(\mathbb{R}^3)).
\end{eqnarray*}
{\rm{(c)}} Assume additionally that $\theta_0\in
\dot{B}^{-\frac{3}{2}}_{2,1}(\mathbb{R}^3)$ and there
exist a absolute constants  $M_2>0$ and a large time $T_0^*$ such that
$$
\int_0^{T_0^*}\|\nabla u\|_{\dot{B}_{2,1}^{\frac{3}{2}}}d\tau<M_2\label{u-satisfy-1},
$$
we have
\begin{eqnarray*}
&&a\in C_b([0,\infty); B^{\frac{3}{2}}_{2,1}(\mathbb{R}^3)),\\
&&\theta\in C_b([0,\infty); \dot{B}^{-\frac{3}{2}}_{2,1}(\mathbb{R}^3))\cap L_{loc}^1(\mathbb{R}^+;{\dot{B}}^{\frac{1}{2}}_{2,1}(\mathbb{R}^3)),\\
&&u\in C_b([0,\infty);{B}^{\frac{1}{2}}_{2,1}(\mathbb{R}^3))\cap L_{loc}^1(\mathbb{R}^+; {\dot{B}}^{\frac{5}{2}}_{2,1}(\mathbb{R}^3)),\\
&&\nabla\Pi\in L_{loc}^1(\mathbb{R}^+; {{B}}^{\frac{1}{2}}_{2,1}(\mathbb{R}^3)).
\end{eqnarray*}
{\rm{(d)}} Moreover, if $(a_0,\theta_0, u_0)\in B^{\frac{5}{2}}_{2,1}(\mathbb{R}^3)\times\dot{B}^{-\frac{1}{2}}_{2,1}(\mathbb{R}^3)\times
B^{\frac{3}{2}}_{2,1}(\mathbb{R}^3)$ besides the conditions in (c), we have
\begin{eqnarray*}
&& a\in C_b([0,\infty);B^{\frac{5}{2}}_{2,1}(\mathbb{R}^3)), \\
&&\theta\in C_b([0,\infty); \dot{B}^{-\frac{1}{2}}_{2,1}(\mathbb{R}^3))\cap L_{loc}^1(\mathbb{R}^+;{{B}}^{\frac{3}{2}}_{2,1}(\mathbb{R}^3)),\\
&&u\in C_b([0,\infty);{B}^{\frac{3}{2}}_{2,1}(\mathbb{R}^3))\cap L_{loc}^1(\mathbb{R}^+; {\dot{B}}^{\frac{7}{2}}_{2,1}(\mathbb{R}^3)),\\
&&\nabla\Pi\in L_{loc}^1(\mathbb{R}^+; {{B}}^{\frac{3}{2}}_{2,1}(\mathbb{R}^3)).
\end{eqnarray*}
\end{thm}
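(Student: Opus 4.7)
The approach is to establish the four parts in sequence, with each stage leveraging the previous ones. For part (a), I would start from the basic energy identities: the transport equation preserves every $L^p$ norm of $a$ (including the pointwise bounds $m\le a\le M$) since $\mathrm{div}\,u=0$; testing the $\theta$-equation with $\theta$ yields $\tfrac{1}{2}\tfrac{d}{dt}\|\theta\|_2^2+\int(1+a)|\nabla\theta|^2\,dx=0$; and testing the $u$-equation with $u$ yields $\tfrac{1}{2}\tfrac{d}{dt}\|u\|_2^2+\int(1+a)|\nabla u|^2\,dx=\int\theta u_3\,dx$. The smallness of $\|a_0\|_{\dot B^{3/2}_{2,1}}$ propagates to smallness of $\|a\|_\infty$ via the embedding $\dot B^{3/2}_{2,1}\hookrightarrow L^\infty$ in three dimensions and the transport preservation of $L^\infty$, so $1+a\ge 1/2$ and the standard heat-equation coercivity is available up to an acceptable perturbation.

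For the decay rate $\|\theta\|_2^2\lesssim(1+t)^{-5/2}$, I would apply the Schonbek Fourier-splitting method, writing $\theta$ via Duhamel against the heat semigroup and splitting frequencies at a ball of radius $g(t)\sim(1+t)^{-1/2}$. The zero-mean condition $\int\theta_0\,dx=0$ (preserved in time since the temperature equation is in divergence form $\partial_t\theta+\nabla\cdot(u\theta)=\nabla\cdot((1+a)\nabla\theta)$) together with $\theta_0\in L^1_1$ gives $|\widehat{\theta}_0(\xi)|\le C|\xi|$ near $\xi=0$, producing an additional factor $(1+t)^{-1/2}$ on top of the usual $(1+t)^{-3/4}$ heat decay. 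A bootstrap on the equation differentiated in space then yields $\|\nabla\theta\|_2^2\lesssim(1+t)^{-7/2}$. The uniform $L^2$ bound on $u$ follows from $|\int\theta u_3|\le\|\theta\|_2\|u\|_2$, which gives $\tfrac{d}{dt}\|u\|_2\le\|\theta\|_2\lesssim(1+t)^{-5/4}$, integrable in time.

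Part (b) is obtained by higher-order energy estimates. Differentiating the $a$-transport equation in space gives $\tfrac{d}{dt}\|\nabla a\|_3\le\|\nabla u\|_\infty\|\nabla a\|_3$ and Gronwall closes via the hypothesis $\int_0^\infty\|\nabla u\|_\infty\,d\tau<M_1$. Testing the $\theta$- and $u$-equations against $-\Delta\theta$ and $-\Delta u$ produces $\|\Delta\theta\|_2^2$- and $\|\Delta u\|_2^2$-coercive terms thanks to the smallness of $a$ in $L^\infty$, while the buoyancy term is controlled by $|\int\theta e_3\cdot(-\Delta u)|\le\|\theta\|_2\|\Delta u\|_2$ absorbed using the decay from (a). Parts (c) and (d) transplant the analysis to the Littlewood--Paley/Besov framework: for $a$ the transport estimate in critical Besov spaces gives
\[
\|a(t)\|_{B^{3/2}_{2,1}}\le\|a_0\|_{B^{3/2}_{2,1}}\exp\Big(C\int_0^t\|\nabla u\|_{B^{3/2}_{2,1}}\,d\tau\Big),
\]
bounded by the hypothesis $\int_0^{T_0^*}\|\nabla u\|_{\dot B^{3/2}_{2,1}}\,d\tau<M_2$; for $\theta$ and $u$ one applies Duhamel with maximal heat-kernel regularity in Besov norms, handling the nonlinear terms $u\cdot\nabla\theta$, $\nabla(a\nabla\theta)$, $u\cdot\nabla u$, and $a\Delta u$ by Bony paraproduct decomposition. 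Part (d) is the same machinery at one regularity level higher, reusing the bounds from (c) to linearize the composition and product estimates.

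The chief obstacle is in (a): coordinating the density-dependent coercivity with the Fourier-splitting bookkeeping and closing a self-consistent bootstrap in which the decay of $\theta$ feeds the uniform bound on $u$, which in turn controls the convective term $u\cdot\nabla\theta$. In particular, the nonlinear diffusion commutator $\nabla(a\nabla\theta)$ must be shown not to spoil the low-frequency cancellation exploited by Schonbek's argument, which requires writing it as a divergence and bounding its first moment through the $L^\infty$-smallness of $a$ together with the evolving bounds on $\nabla\theta$. Parts (b)--(d) are comparatively routine once (a) and the smallness hypotheses are established.
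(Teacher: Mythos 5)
Your plan for parts (a) and (b) is essentially the paper's: Schonbek's Fourier splitting applied to the mollified/Duhamel representation, the moment condition $\int\theta_0\,dx=0$, $\theta_0\in L^1_1$ giving the extra half power of decay through $\|e^{t\Delta}\theta_0\|_2^2\lesssim(1+t)^{-5/2}$, a bootstrap coupling the decay of $\theta$ to the uniform bound on $u$ via $\frac{d}{dt}\|\sqrt{\rho}\,u\|_2\le C\|\theta\|_2$, and $H^1$-level estimates for (b). Two caveats there: the energy identity you state for $u$ is not the correct one for this system, since $(1+a)\Delta u$ is not in divergence form and $\int a\,\nabla\Pi\cdot u$ does not vanish; the usable identity is the density-weighted one $\frac12\frac{d}{dt}\|u/\sqrt{1+a}\|_2^2+\|\nabla u\|_2^2=\int\frac{\theta u_3}{1+a}\,dx$, which is what the paper exploits. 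Also, the paper derives all decay and $H^1$ estimates on the retarded-mollifier approximations $(a^n,\theta^n,u^n)$ (with an induction in $n$ to get the uniform bound on $\|u^{n-1}\|_2$ and $\int\|\nabla u^{n-1}\|_2^2$) and then passes to the limit using weak lower semicontinuity; working formally on the weak solution itself, as you do, is not justified at this regularity level, and the smallness $\|\theta_0\|_1<\varepsilon_0$ enters precisely in that bootstrap to keep $\|\nabla u^{n-1}(t)\|_2$ small for $t\ge t_0$, which is what makes the $-\Delta u$ and $-\Delta\theta$ tests closable.

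The genuine gap is in your treatment of (c) (and hence (d)). The hypothesis is only $\int_0^{T_0^*}\|\nabla u\|_{\dot B^{3/2}_{2,1}}\,d\tau<M_2$ for one finite (large) time $T_0^*$, not a global-in-time bound, so the Gronwall estimate you invoke for $a$, and a fortiori the global $L^1_t(\dot B^{5/2}_{2,1})$ and $L^1_t(\dot B^{1/2}_{2,1})$ bounds for $u$, $\nabla\Pi$ and $\theta$, cannot be closed from the hypothesis alone; ``Duhamel with maximal regularity plus paraproducts'' gives no global-in-time integrability without a smallness mechanism. The paper's key step, which your proposal omits, is to convert the decay of part (a) into eventual smallness in the critical norms: interpolation gives $\|\theta\|_{\dot B^{1/2}_{2,1}}\le\|\theta\|_2^{1/2}\|\nabla\theta\|_2^{1/2}\lesssim(1+t)^{-3/2}$, hence $\|\theta\|_{L^1(\mathbb{R}^+;\dot B^{1/2}_{2,1})}\le C\varepsilon_0$, and $\int_0^\infty\|u\|_2^2\|\nabla u\|_2^2\,dt\le C$ produces a time $T_0\le T_0^*$ with $\|u(T_0)\|_{\dot B^{1/2}_{2,1}}<\epsilon$. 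On $[0,T_0]$ one uses the $M_2$ hypothesis; on $[T_0,\infty)$ one runs a small-data bootstrap for $Z(t)=\|a\|_{\tilde L^\infty_t(\dot B^{3/2}_{2,1})}+\|u\|_{\tilde L^\infty_t(\dot B^{1/2}_{2,1})}+\|u\|_{L^1_t(\dot B^{5/2}_{2,1})}+\|\nabla\Pi\|_{L^1_t(\dot B^{1/2}_{2,1})}$, obtaining $Z\le C(\eta_0+\epsilon+\varepsilon_0)+CZ^2$ and closing by continuity, with $a(\Delta u-\nabla\Pi)$ and $\nabla(a\nabla\theta)$ absorbed thanks to the smallness of $\|a\|_{\dot B^{3/2}_{2,1}}$ (the latter needing a paraproduct estimate of the type $\|\nabla(a\nabla\theta)\|_{\dot B^{-3/2}_{2,1}}\lesssim\|a\|_{\dot B^{3/2}_{2,1}}\|\theta\|_{\dot B^{1/2}_{2,1}}$, which involves negative regularity and is not a routine product law). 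Without this two-stage argument your (c) does not go through, and since (d) reuses the global bounds of (c) to propagate higher regularity, it inherits the same gap.
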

Next, we give our main results for the stability of the global solution constructed in Theorem \ref{main-decay-bounded}.
\begin{thm}\label{1.2}
{\rm{(a)}} Let $(\bar{a},\bar{\theta},\bar{u},\bar{\Pi})$ be a global solution of (\ref{rewrite-boussinesq}) constructed in Theorem \ref{main-decay-bounded} (c). If there exist two positive constants $\varepsilon_0$ and $\varepsilon_1$ such that $\|\bar{\theta}_0\|_1\leq \varepsilon_0$ and
\begin{eqnarray*}
\|\tilde{a}_0\|_{\dot{B}_{2,1}^{\frac{3}{2}}}
 +\|\tilde{\theta}_0\|_2+\|\tilde{\theta}_0\|_{\dot{B}_{2,1}^{-\frac{3}{2}}}
 +\|\tilde{u}_0\|_{{B}_{2,1}^{\frac{1}{2}}}<\varepsilon_1,
\end{eqnarray*}
then $(a_0,\theta_0,u_0)=(\bar{a}_0+\tilde{a}_0,\bar{\theta}_0+\tilde{\theta}_0, \bar{u}_0+\tilde{u}_0)$ generates a global solution $(a,\theta, u, \Pi)$ of (\ref{rewrite-boussinesq}) with $\kappa=1$, $\nu=0$, which satisfies the stability
estimate
\begin{eqnarray*}
&&\|a-\bar{a}\|_{\tilde{L}^\infty(\mathbb{R}^+;{B}^{\frac{3}{2}}_{2,1}(\mathbb{R}^3))}+
  \|u-\bar{u}\|_{\tilde{L}^\infty(\mathbb{R}^+;{B}^{\frac{1}{2}}_{2,1}(\mathbb{R}^3))}+
  \|u-\bar{u}\|_{L^1(\mathbb{R}^+;{\dot{B}}^{\frac{5}{2}}_{2,1}(\mathbb{R}^3))}\\
&&+\|\nabla\Pi-\nabla\bar{\Pi}\|_{L^1(\mathbb{R}^+;{\dot{B}}^{\frac{1}{2}}_{2,1}(\mathbb{R}^3))}
       +\|\theta-\bar{\theta}\|_{\tilde{L}^\infty(\mathbb{R}^+;\dot{B}^{-\frac{3}{2}}_{2,1}(\mathbb{R}^3))}
  +\|\theta-\bar{\theta}\|_{L^1(\mathbb{R}^+;{\dot{B}}^{\frac{1}{2}}_{2,1}(\mathbb{R}^3))}\\
&&+\|\theta-\bar{\theta}\|_2\leq C\varepsilon_1,
\end{eqnarray*}
for some positive constant C.

{\rm{(b)}} Let $(\bar{a},\bar{\theta},\bar{u},\bar{\Pi} )$ be a global solution of constructed in Theorem \ref{main-decay-bounded} (d).
If there exist two positive constants $\varepsilon_0^*$ and $\varepsilon_1^*$ satisfying
$\|\bar{\theta}_0\|_1\leq \varepsilon_0^*$ and
\begin{eqnarray*}
   \|\tilde{a}_0\|_{\dot{B}_{2,1}^{\frac{5}{2}}}
   +\|\tilde{\theta}_0\|_{\dot{B}_{2,1}^{-\frac{1}{2}}}
   +\|\tilde{u}_0\|_{{B}_{2,1}^{\frac{3}{2}}}<\varepsilon_1^*,
\end{eqnarray*}
then $(a_0,\theta_0,u_0)=(\bar{a}_0+\tilde{a}_0,\bar{\theta}_0+\tilde{\theta}_0,
\bar{u}_0+\tilde{u}_0)$ generates a global solution of (\ref{rewrite-boussinesq}) with $\kappa=1$,  $\nu=0$, which satisfies the stability estimate
\begin{eqnarray}\label{more}
&&\|a-\bar{a}\|_{\tilde{L}^\infty(\mathbb{R}^+;{B}^{\frac{5}{2}}_{2,1}(\mathbb{R}^3))}+\|u-\bar{u}\|_{\tilde{L}^\infty(\mathbb{R}^+;{B}^{\frac{3}{2}}_{2,1}(\mathbb{R}^3))}
  +\|u-\bar{u}\|_{L^1(\mathbb{R}^+;{\dot{B}}^{\frac{7}{2}}_{2,1}(\mathbb{R}^3))}
  \nonumber\\
&&+\|\nabla\Pi-\nabla\bar{\Pi}\|_{L^1(\mathbb{R}^+;{\dot{B}}^{\frac{3}{2}}_{2,1}(\mathbb{R}^3))}+\|\theta-\bar{\theta}\|_{\tilde{L}^\infty(\mathbb{R}^+;\dot{B}^{-\frac{1}{2}}_{2,1}(\mathbb{R}^3))}
  +\|\theta-\bar{\theta}\|_{L^1(\mathbb{R}^+;{\dot{B}}^{\frac{3}{2}}_{2,1}(\mathbb{R}^3))}
  \leq C\varepsilon_1^*,
\end{eqnarray}
for some positive constant $C$.
\end{thm}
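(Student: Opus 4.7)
The plan is to analyze the perturbation quadruple $(\tilde{a},\tilde{\theta},\tilde{u},\tilde{\Pi}):=(a-\bar{a},\,\theta-\bar{\theta},\,u-\bar{u},\,\Pi-\bar{\Pi})$ and close a small-data global bound by a continuity (bootstrap) argument in the Chemin--Lerner Besov spaces prescribed by the conclusion. Subtracting the equations of (\ref{rewrite-boussinesq}) (with $\kappa=1$, $\nu=0$) for $(a,\theta,u,\Pi)$ and for the reference solution $(\bar{a},\bar{\theta},\bar{u},\bar{\Pi})$ yields
\begin{eqnarray*}
\partial_t \tilde{a}+u\cdot\nabla\tilde{a} &=& -\tilde{u}\cdot\nabla\bar{a},\\
\partial_t \tilde{\theta}+u\cdot\nabla\tilde{\theta}-\Delta\tilde{\theta}-\nabla(a\nabla\tilde{\theta}) &=& \nabla(\tilde{a}\,\nabla\bar{\theta})-\tilde{u}\cdot\nabla\bar{\theta},\\
\partial_t \tilde{u}+u\cdot\nabla\tilde{u}-(1+a)\Delta\tilde{u}+\nabla\tilde{\Pi} &=& \tilde{a}\,\Delta\bar{u}-\tilde{u}\cdot\nabla\bar{u}+\tilde{\theta}\,e_3,\\
\mathrm{div}\,\tilde{u} &=& 0,
\end{eqnarray*}
with initial data of size $\varepsilon_1$ in the norms of the statement.

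For part (a), I would estimate each line in its natural space. A standard transport estimate in $B^{3/2}_{2,1}$ on the $\tilde{a}$-equation, combined with $\mathrm{div}\,u=0$ and the exponential gain $\exp\int_0^t\|\nabla u\|_{B^{3/2}_{2,1}}\,d\tau$ (finite by Theorem \ref{main-decay-bounded}(c)), controls $\|\tilde{a}\|_{\tilde{L}^\infty_t(B^{3/2}_{2,1})}$ by $\varepsilon_1$ plus $\int_0^t\|\tilde{u}\|_{\dot{B}^{5/2}_{2,1}}\|\bar{a}\|_{B^{3/2}_{2,1}}\,d\tau$. Next, splitting $(1+a)\Delta\tilde{u}=\Delta\tilde{u}+a\Delta\tilde{u}$ and similarly for the temperature equation, I apply dyadic maximal-regularity estimates for the heat and generalized Stokes operators in $\dot{B}^{-3/2}_{2,1}\to\dot{B}^{1/2}_{2,1}$ for $\tilde{\theta}$ and $\dot{B}^{1/2}_{2,1}\to\dot{B}^{5/2}_{2,1}$ for $(\tilde{u},\nabla\tilde{\Pi})$. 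The quasilinear remainders $a\Delta\tilde{u}$ and $\nabla(a\nabla\tilde{\theta})$ are absorbed on the left via the propagated smallness of $\|a\|_{\dot{B}^{3/2}_{2,1}}$ from Theorem \ref{main-decay-bounded}. The forcings $\tilde{a}\Delta\bar{u}$, $\tilde{u}\cdot\nabla\bar{u}$, $\tilde{u}\cdot\nabla\bar{\theta}$ and $\nabla(\tilde{a}\nabla\bar{\theta})$ are handled by product estimates in Besov spaces, using the integrability $\bar{u}\in L^1(\dot{B}^{5/2}_{2,1})$, $\bar{\theta}\in L^1(\dot{B}^{1/2}_{2,1})$ of Theorem \ref{main-decay-bounded}(c) and the decay of Theorem \ref{main-decay-bounded}(a). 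Finally, the $L^\infty_t L^2_x$ bound for $\tilde{\theta}$ comes from a direct energy estimate on the $\tilde{\theta}$-equation after multiplying by $\tilde{\theta}$ and using the smallness of $a$ together with the already-controlled transport source.

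Combining these into
$$X(t):=\|\tilde{a}\|_{\tilde{L}^\infty_t(B^{3/2}_{2,1})}+\|\tilde{u}\|_{\tilde{L}^\infty_t(B^{1/2}_{2,1})\cap L^1_t(\dot{B}^{5/2}_{2,1})}+\|\tilde{\theta}\|_{\tilde{L}^\infty_t(\dot{B}^{-3/2}_{2,1})\cap L^1_t(\dot{B}^{1/2}_{2,1})}+\|\tilde{\theta}\|_{L^\infty_t L^2}+\|\nabla\tilde{\Pi}\|_{L^1_t(\dot{B}^{1/2}_{2,1})},$$
the estimates should deliver an inequality of the form $X(t)\le C\varepsilon_1+C\bigl(\eta_0+\Phi(t)\bigr)X(t)+CX(t)^2$, where $\Phi(t)$ collects the (globally finite) time integrals of the reference-solution norms. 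A bootstrap argument on the largest $t$ for which $X(t)\le 2C\varepsilon_1$ then globalizes a locally constructed solution. Part (b) proceeds identically at one derivative higher: $B^{3/2}_{2,1}$ is replaced by $B^{5/2}_{2,1}$ for $\tilde{a}$, $\dot{B}^{-3/2}_{2,1}\to\dot{B}^{1/2}_{2,1}$ by $\dot{B}^{-1/2}_{2,1}\to\dot{B}^{3/2}_{2,1}$ for $\tilde{\theta}$, and $\dot{B}^{1/2}_{2,1}\to\dot{B}^{5/2}_{2,1}$ by $\dot{B}^{3/2}_{2,1}\to\dot{B}^{7/2}_{2,1}$ for $\tilde{u}$; the required integrability of the reference norms at this higher level is precisely what Theorem \ref{main-decay-bounded}(d) supplies.

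The main obstacle will be the quasilinear terms $a\Delta\tilde{u}$ and $\nabla(a\nabla\tilde{\theta})$: they carry the same differential order as the leading parabolic parts, so closure is possible only if $\|a\|_{\dot{B}^{3/2}_{2,1}}$ remains small \emph{for all times}. This is guaranteed by combining $\|a_0\|_{\dot{B}^{3/2}_{2,1}}<\eta_0$ with the transport character of the $a$-equation and the finiteness of $\int_0^\infty\|\nabla u\|_{B^{3/2}_{2,1}}\,d\tau$ inherited from Theorem \ref{main-decay-bounded}(c). A secondary difficulty is controlling the forcing $\tilde{u}\cdot\nabla\bar{\theta}$ globally in $L^1_t(\dot{B}^{1/2}_{2,1})$: it is exactly the sharp decay $\|\nabla\bar{\theta}(t)\|_2\lesssim(1+t)^{-7/4}$ of Theorem \ref{main-decay-bounded}(a) that converts the $L^\infty_t\dot{B}^{-3/2}_{2,1}$ control on $\bar{\theta}$ into the time-integrable bound that makes the global estimate close.
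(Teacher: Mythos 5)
Your overall architecture (perturbation system, transport and transport--diffusion estimates in critical Chemin--Lerner spaces, product laws, a bootstrap on a composite norm, then higher-order propagation for part (b)) coincides with the paper's. However, there is a genuine gap in your treatment of the density. You write the difference equation $\partial_t\tilde{a}+u\cdot\nabla\tilde{a}=-\tilde{u}\cdot\nabla\bar{a}$ and claim a transport estimate controls $\|\tilde{a}\|_{\tilde{L}^\infty_t(B^{3/2}_{2,1})}$ by $\varepsilon_1$ plus $\int_0^t\|\tilde{u}\|_{\dot{B}^{5/2}_{2,1}}\|\bar{a}\|_{B^{3/2}_{2,1}}\,d\tau$. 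That step needs the source $\tilde{u}\cdot\nabla\bar{a}$ in $L^1_t(\dot{B}^{3/2}_{2,1})$, i.e.\ a product law of the type $\dot{B}^{5/2}_{2,1}\cdot\dot{B}^{1/2}_{2,1}\subset\dot{B}^{3/2}_{2,1}$, which is false: since $\bar{a}$ has exactly the critical regularity $B^{3/2}_{2,1}$, the paraproduct $\dot{T}_{\tilde{u}}\nabla\bar{a}$ can only be estimated at the regularity of $\nabla\bar{a}$, namely $\dot{B}^{1/2}_{2,1}$ (Lemma \ref{T-estimate} gives a gain only when the low-frequency factor has negative regularity), and the hypotheses of Lemma \ref{3.1} are violated because $s_1=5/2>3/2$. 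In general the product of $\dot{B}^{5/2}_{2,1}$ and $\dot{B}^{1/2}_{2,1}$ lies only in $\dot{B}^{1/2}_{2,1}$, so your $\tilde{a}$-estimate loses one derivative and the scheme does not close at the level $B^{3/2}_{2,1}$; the same obstruction recurs one derivative higher in part (b), where you would need $\tilde{u}\cdot\nabla\bar{a}\in L^1_t(\dot{B}^{5/2}_{2,1})$ with $\bar{a}$ only in $B^{5/2}_{2,1}$.

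The paper avoids this precisely by never writing an equation for $\tilde{a}$: in (\ref{d15}) the full density unknown $a$ is kept, it solves the sourceless transport equation $\partial_t a+(\bar{u}+\tilde{u})\cdot\nabla a=0$, so Lemma \ref{prop:estimate tranport equation} propagates $\|a\|_{\tilde{L}^\infty_t(\dot{B}^{3/2}_{2,1})}$ with only a Gronwall factor $\exp\{C\int\|\nabla u\|_{\dot{B}^{3/2}_{2,1}}d\tau\}$; this quantity is taken as a component of the bootstrap functional $\tilde{Z}(t)$, and $a-\bar{a}$ is then controlled through the separate bounds on $a$ and on $\bar{a}$, whose smallness $c_1$ from (\ref{a-t-estimate}) enters the smallness condition of the bootstrap. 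You should adopt this device (or assume one extra derivative on $\bar{a}$, which is not available in case (a)). A secondary but related imprecision: the smallness of $\|a\|_{\dot{B}^{3/2}_{2,1}}$ that you invoke to absorb $a\Delta\tilde{u}$ and $\nabla(a\nabla\tilde{\theta})$ concerns the perturbed solution, not the reference one, so it cannot simply be quoted from Theorem \ref{main-decay-bounded}; it has to be produced inside the bootstrap, which is again exactly what the paper's choice of unknown accomplishes. The remaining ingredients of your plan (the estimates for $\tilde{u}$, $\nabla\tilde{\Pi}$, $\tilde{\theta}$, the use of the decay $\|\nabla\bar{\theta}\|_2\lesssim(1+t)^{-7/4}$, the $L^2$ energy estimate for $\tilde{\theta}$, and the propagation of higher regularity for (b)) are consistent with the paper's proof.
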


The paper is organized as follows. In Section 2, we recall some preliminaries of Besov spaces. In Section 3, we prove the existence and decay of weak solutions which is the main technique to prove stability for this system in critical Besov space. In Section 4, we give the proof of Theorem \ref{main-decay-bounded}. Finally,  stability of global solutions are proved in the Besov space in Section 5.

\section{Some preliminaries of Besov spaces}
\setcounter{equation}{0}
We first give some notations. Denote by $C_0^\infty$ the space of smooth
functions in $\mathbb{R}^3$ with compact support. The $L^p$ will be denoted by
$\|\cdot\|_p$. Let $\mathcal{V}=\{\phi\in C_0^\infty|\nabla\cdot\phi=0\}$.
Denote by $L^p_{\sigma}$ the completion of $\mathcal{V}$ under the norm
$\|\cdot\|_p$. We denote $$\|f\|_{L_r^p}=(\int |f(x)|^p(1+|x|^{pr}dx)^{1/p}.$$

For $a\preceq b$, we mean that there is a uniform constant $C$,
which may be different on different lines, such that $a \leq C b$.
We shall denote by $(a|b)$ (or $(a|b)_{L^2}(\mathbb{R}^3)$) the
$L^2(\mathbb{R}^3)$ inner product of $a$ and $b$. For $X$ a Banach
space and $I$ an interval of $\mathbb{R}$, we denote by $C(I;X)$ the
set of continuous functions on $I$ with values in $X$, and by
$C_b(I;X)$  the subset of bounded functions of $C(I;X)$. For $q\in
[1,\infty]$, the notation $L^q(I;X)$ stands for the set of
measurable functions on $I$ with values in $X$ such that $t\mapsto
\|f(t)\|_X$ belongs to $L^q(I)$. For $k
\in N$, we denote by $W^{k,p}$ the set of $L^p$ functions with
derivatives up to order $k$ in $L^p$.

Next, we recall some tools from the theories of the Besov spaces, for details see \cite{AGZ,BCD}.
\begin{defi}\label{2.1}
Let $u\in \mathcal{S}'(\mathbb{R}^N)$ such that $\lim_{j\rightarrow -\infty}\dot{S}_ju=0$, $s\mathbb{R}$ and $1 \leq
p,\ r \leq \infty$. We set
\begin{eqnarray*}
   \|u\|_{\dot{B}_{p,r}^s}=(\sum_{q\in \mathbb{Z}} 2^{qsr}\|\dot{\triangle}_q u\|_{L^p}^r)^\frac{1}{r} \ \ \ \mbox{if}\ \ \ r<\infty\ \ \ \mbox{and}
  \ \ \ \|u\|_{\dot{B}_{p,\infty}^s}=\sup_{q\in \mathbb{Z}} 2^{qs}\|\dot{\triangle}_q u\|_{L^p}.
\end{eqnarray*}
For $s<\frac{N}{p}\ (\mbox{or} \ s=\frac{N}{p} \ \mbox{if}\ r=1)$,
we define
\begin{eqnarray*}
   \dot{B}_{p,r}^s=\{u\in \mathcal{S}';\ \|u\|_{\dot{B}_{p,r}^s} <\infty\}.
\end{eqnarray*}
If $k\in \mathbb{N}$ and $\frac{N}{p}+k \leq s < \frac{N}{p}+k+1$
(or $s=\frac{N}{p}+k+1 $  if $r=1$), then $\dot{B}_{p,r}^s$ is
defined as the subset of distributions $u\in \mathcal{S}'(\mathbb{R}^N)$ such
that $\partial^\beta u\in \dot{B}_{p,r}^{s-k}$ whenever $|\beta|=k$.
\end{defi}

\begin{re} \label{2.2}
{\rm{(1)}} We point out that if $s > 0$, then $B^s_{p,r}= \dot{B}^s_{p,r}\cap
L^p$ and
\begin{eqnarray*}
   \|u\|_{B^s_{p,r}}\approx \|u\|_{\dot{B}^s_{p,r}}+\|u\|_{L^p}.
\end{eqnarray*}
If $s < 0$, then $ \dot{B}^s_{p,r}\hookrightarrow B^s_{p,r}$ and
\begin{eqnarray*}
   \|u\|_{B^s_{p,r}}\leq \frac{C}{-s}\|u\|_{\dot{B}^s_{p,r}}
\end{eqnarray*}
with $B^s_{p,r}$ being the nonhomogeneous Besov space.

{\rm{(2)}}(i) If $r_1\leq r_2$, for all $s\in \mathbb{R}$, we have
\begin{eqnarray*}
  \dot{B}_{p,r_1}^s\hookrightarrow \dot{B}_{p,r_2}^s.
\end{eqnarray*}

(ii) If $1\leq p_1\leq p_2\leq \infty,\ 1\leq r_1\leq r_2\leq
\infty$, for all $s\in \mathbb{R}$,
\begin{eqnarray*}
  \dot{B}_{p_1,r_1}^s\hookrightarrow \dot{B}_{p_2,r_2}^{s-N(\frac{1}{p_1}-\frac{1}{p_2})}.
\end{eqnarray*}

(iii) If $u\in \dot{B}_{p,r}^{s_1} \bigcap \dot{B}_{p,r}^{s_2}$,
then $u\in \dot{B}_{p,r}^{s}$ with $s=\alpha s_1+(1-\alpha)s_2$ for all $\alpha\in(0,1)$,
\begin{eqnarray*}
  \|u\|_{\dot{B}_{p,r}^{s}}\preceq \|u\|_{\dot{B}_{p,r}^{s_1}}^\alpha\|u\|_{\dot{B}_{p,r}^{s_2}}^{1-\alpha}.
\end{eqnarray*}

{\rm{(3)}} Let $s\in \mathbb{R}$, $1 \leq p,r\leq \infty$, and $u\in
S'(\mathbb{R}^3)$. Then $u$ belongs to
$\dot{B}^s_{p,r}(\mathbb{R}^3)$ if and only if there exists
$\{c_{j,r}\}_{j\in \mathbb{Z}} $ such that $\|c_{j,r}\|_{l^r}=1$ and we have the inequality
\begin{eqnarray*}
  \|\dot{\triangle}_j u\|_{L^p}\preceq c_{j,r}2^{-js}\|u\|_{\dot{B}^s_{p,r}}\ \ \ for\ all\ j\in \mathbb{Z}.
\end{eqnarray*}
\end{re}

We shall use frequently the product estimate in homogeneous space, here we give the definition of product,
$$uv=\sum_{j',j}\dot{\triangle}_{j'}u\dot{\triangle}_{j}v=\dot{T}_u v+\dot{T}_v u+\dot{R}(u,v),$$ where
$$\dot{T}_u v=\sum_j \dot{S}_{j-1} u\dot{\triangle}_{j}v, \ \dot{R}(u,v)=\sum_{|k-j|\leq 1}\dot{\triangle}_{k}u\dot{\triangle}_{j}v$$.

\begin{lem}\label{T-estimate}(\cite{BCD}{Theorem 2.47})
There exists a constant C such that for any real number $s$ and any $(p,r)$  in $[1,\infty]^2$, we have, for any $(u,v)\in L^\infty \times\dot{B}_{p,r}^{s},$
$$ \|\dot{T}_u v\|_{\dot{B}_{p,r}^{s}}\leq C^{1+|s|}\|u\|_{L^\infty}\|v\|_{\dot{B}_{p,r}^{s}}.$$
Moreover, for any $(s,t)$ in $\mathbb{R}\times (-\infty,0)$ and any $(p,r_1,r_2)$ in $[1,\infty]^3$, we have, for any $(u,v)\in \dot{B}_{\infty,r_1}^{t}\times \dot{B}_{p,r_2}^{s}$,
\begin{eqnarray*}
\|\dot{T}_u v\|_{\dot{B}_{p,r}^{s+t}}\leq C^{1+|s+t|}\|u\|_{\dot{B}_{\infty,r_1}^{t}}\|v\|_{\dot{B}_{p,r_2}^{s}}, \frac{1}{r}=\min\{1,\frac{1}{r_1}+\frac{1}{r_2}\}.
\end{eqnarray*}
\end{lem}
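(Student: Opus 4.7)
The plan is to follow the standard Littlewood--Paley decomposition argument for the paraproduct, exploiting the key \emph{spectral localization} of each summand $\dot{S}_{j-1}u\,\dot{\triangle}_j v$. A direct Fourier support computation shows that this product is spectrally supported in an annulus of radius comparable to $2^j$, so there is an absolute integer $N_0$ (independent of $u,v,s,t$) such that $\dot{\triangle}_q(\dot{S}_{j-1}u\,\dot{\triangle}_j v)=0$ unless $|q-j|\leq N_0$. Consequently
\[
\dot{\triangle}_q(\dot{T}_u v)=\sum_{|j-q|\leq N_0}\dot{\triangle}_q(\dot{S}_{j-1}u\,\dot{\triangle}_j v),
\]
reducing the proof to summing finitely many blocks near $j=q$.

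For the first estimate, I would bound each block in $L^p$ by H\"older's inequality plus the $L^p$-boundedness of $\dot{\triangle}_q$ (a convolution with an $L^1$ kernel of mass independent of $q$):
\[
\|\dot{\triangle}_q(\dot{S}_{j-1}u\,\dot{\triangle}_j v)\|_{L^p}\lesssim \|\dot{S}_{j-1}u\|_{L^\infty}\|\dot{\triangle}_j v\|_{L^p}\lesssim \|u\|_{L^\infty}\|\dot{\triangle}_j v\|_{L^p},
\]
using that $\dot{S}_{j-1}$ is also a uniformly bounded convolution operator on $L^\infty$. Multiplying by $2^{qs}$, using $2^{qs}\leq 2^{|s|N_0}2^{js}$ from $|j-q|\leq N_0$, and taking the $\ell^r$ norm in $q$ yields the claim; the factor $2^{|s|N_0}$ is absorbed into $C^{1+|s|}$ for $C$ large enough.

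For the second estimate the gain comes from replacing $L^\infty$ by $\dot{B}^t_{\infty,r_1}$ in the control of $\dot{S}_{j-1}u$. Using Remark \ref{2.2}(3) with the sequence $c_{j',r_1}$ satisfying $\|c_{j',r_1}\|_{\ell^{r_1}}=1$, I would write
\[
\|\dot{S}_{j-1}u\|_{L^\infty}\leq \sum_{j'\leq j-2}\|\dot{\triangle}_{j'}u\|_{L^\infty}\lesssim \sum_{j'\leq j-2}c_{j',r_1}\,2^{-j't}\,\|u\|_{\dot{B}^t_{\infty,r_1}}.
\]
Since $t<0$, a discrete H\"older inequality and the convergent geometric sum $\sum_{j'\leq j-2}2^{-j't r_1'}\lesssim (-t)^{-1}2^{-jt r_1'}$ give $\|\dot{S}_{j-1}u\|_{L^\infty}\lesssim 2^{-jt}\,d_j\,\|u\|_{\dot{B}^t_{\infty,r_1}}$ with $\{d_j\}\in \ell^{r_1}$ of unit norm. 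Inserting this into the block estimate, multiplying by $2^{q(s+t)}$, writing $\|\dot{\triangle}_j v\|_{L^p}\lesssim e_j 2^{-js}\|v\|_{\dot{B}^s_{p,r_2}}$ with $\{e_j\}\in \ell^{r_2}$ unit, and restricting to $|q-j|\leq N_0$, I obtain a pointwise estimate whose right-hand side is a finite-range discrete convolution of the unit $\ell^{r_1}$ and $\ell^{r_2}$ sequences. Taking $\ell^r$ in $q$ and applying Young's inequality for discrete convolutions produces precisely the index relation $\tfrac{1}{r}=\min\{1,\tfrac{1}{r_1}+\tfrac{1}{r_2}\}$.

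The main obstacle is purely bookkeeping rather than conceptual: first, controlling the dependence of the constant on $s$ (respectively $s+t$) so that it takes the asserted form $C^{1+|s|}$ (respectively $C^{1+|s+t|}$), which is achieved by isolating the factor $2^{|s|N_0}$ produced by the band $|j-q|\leq N_0$ and absorbing all fixed multiplicative constants into the base $C$; and second, identifying the correct summation index $r$, which requires a careful application of Young's inequality to the finite-range convolution. Both steps are carried out in detail in \cite[Theorem 2.47]{BCD}, which I would follow verbatim to complete the proof.
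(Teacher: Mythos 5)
Your outline is correct and is exactly the standard Littlewood--Paley argument of \cite[Theorem 2.47]{BCD}, which is all the paper itself does here: the lemma is quoted without proof, with that reference. The spectral localization of $\dot{S}_{j-1}u\,\dot{\triangle}_j v$, the block estimate by H\"older, the geometric summation using $t<0$, and the H\"older/Young bookkeeping giving $\frac{1}{r}=\min\{1,\frac{1}{r_1}+\frac{1}{r_2}\}$ all match the cited source, so nothing further is needed.
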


\begin{lem}\label{R-estimate}(\cite{BCD}{Theorem 2.52})
Let $(s_1,s_2)$ be in $\mathbb{R}^2$ and $(p_1,p_2,r_1,r_2)$ be in $[1,\infty]^4$. Assume that
$$ \frac{1}{p}=\frac{1}{p_1}+\frac{1}{p_2}\leq 1$$ and $$ \frac{1}{r}=\frac{1}{r_1}+\frac{1}{r_2}\leq 1.$$
If $s_1+s_2$ is positive, then we have, for any $(u,v)$
in $\dot{B}_{p_1,r_1}^{s_1}\times \dot{B}_{p_2,r_2}^{s_2}$,
$$
\|\dot{R}(u,v)\|_{\dot{B}_{p,r}^{s_1+s_2}}\leq\frac{C^{|s_1+s_2|+1}}{s_1+s_2}\|u\|_{\dot{B}_{p_1,r_1}^{s_1}}\|v\|_{\dot{B}_{p_2,r_2}^{s_2}}.
$$
When $r=1$ and $s_1+s_2\geq 0$, we have, for any $(u,v)$ in $\dot{B}_{p_1,r_1}^{s_1}\times \dot{B}_{p_2,r_2}^{s_2}$,
$$
\|\dot{R}(u,v)\|_{\dot{B}_{p,1}^{s_1+s_2}}\leq C^{|s_1+s_2|+1}\|u\|_{\dot{B}_{p_1,r_1}^{s_1}}\|v\|_{\dot{B}_{p_2,r_2}^{s_2}}.
$$
\end{lem}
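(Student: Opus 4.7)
The plan is to prove the claimed remainder estimate by reducing it, via the Littlewood–Paley characterization of Besov norms, to a convolution inequality for nonnegative sequences. I will proceed in three conceptual steps: a spectral localization step, a dyadic Hölder estimate, and a Young-type summation.

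First, I expand $\dot R(u,v)=\sum_j\sum_{|k-j|\le 1}\dot\Delta_k u\,\dot\Delta_j v$. Because $\dot\Delta_k u$ and $\dot\Delta_j v$ are frequency-localized in annuli of size $2^k$ and $2^j$, respectively, and $|k-j|\le 1$, the product $\dot\Delta_k u\,\dot\Delta_j v$ is Fourier-supported in a ball of radius $\lesssim 2^j$. Consequently there is a fixed integer $N_0$ (depending only on the dyadic partition) such that $\dot\Delta_q(\dot\Delta_k u\,\dot\Delta_j v)=0$ whenever $j<q-N_0$. This restricts the sum that contributes to $\dot\Delta_q\dot R(u,v)$ to indices $j\ge q-N_0$.

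Next, applying Hölder's inequality in $L^p$ with $1/p=1/p_1+1/p_2$, and then invoking Remark \ref{2.2}(3) to write
\begin{eqnarray*}
\|\dot\Delta_k u\|_{L^{p_1}}\le c_k\,2^{-ks_1}\|u\|_{\dot B^{s_1}_{p_1,r_1}},\qquad \|\dot\Delta_j v\|_{L^{p_2}}\le d_j\,2^{-js_2}\|v\|_{\dot B^{s_2}_{p_2,r_2}},
\end{eqnarray*}
with $\{c_k\}\in\ell^{r_1}$ and $\{d_j\}\in\ell^{r_2}$ of unit norm, I obtain
\begin{eqnarray*}
2^{q(s_1+s_2)}\|\dot\Delta_q \dot R(u,v)\|_{L^p}\le C\sum_{j\ge q-N_0}2^{(q-j)(s_1+s_2)}\,e_j\,\|u\|_{\dot B^{s_1}_{p_1,r_1}}\|v\|_{\dot B^{s_2}_{p_2,r_2}},
\end{eqnarray*}
where $e_j$ is built from $c_{j-1},c_j,c_{j+1}$ and $d_j$; by Hölder's inequality on sequences, $\{e_j\}$ lies in $\ell^{\bar r}$ with $1/\bar r=1/r_1+1/r_2$ and norm $\le 1$.

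Finally, the $\ell^r$-norm in $q$ of the right-hand side is a discrete convolution of the geometric sequence $\{2^{-j(s_1+s_2)}\mathbf{1}_{j\ge -N_0}\}$ with $\{e_j\}$. When $s_1+s_2>0$ the geometric sequence is summable, with $\ell^1$-norm bounded by $C^{|s_1+s_2|+1}/(s_1+s_2)$ (this is where the claimed singular factor $1/(s_1+s_2)$ appears, via $\sum_{n\ge 0}2^{-n(s_1+s_2)}=1/(1-2^{-(s_1+s_2)})$); Young's convolution inequality then delivers the $\ell^r$ bound with $1/r=1/r_1+1/r_2$, which is exactly the first stated estimate. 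For the second statement, $r=1$ forces $r_1=r_2=1$, so after the sequence Hölder one has $\{e_j\}\in\ell^1$; the triangle inequality in $\ell^1$ together with the summability of $\{2^{-j(s_1+s_2)}\mathbf 1_{j\ge -N_0}\}$, valid as soon as $s_1+s_2\ge 0$ (bounded by $C^{|s_1+s_2|+1}$ uniformly), yields the sharper bound without the $1/(s_1+s_2)$ factor.

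The only delicate point is the bookkeeping of the constant $C^{|s_1+s_2|+1}$ and the appearance of $1/(s_1+s_2)$: one must ensure that the dyadic cutoff $N_0$ is absorbed into a universal constant and that the geometric sum is estimated with the correct exponential-in-$|s_1+s_2|$ dependence. Apart from that, the argument is a direct application of spectral localization, Hölder, and Young.
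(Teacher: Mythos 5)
Your proof of the first inequality is correct and is essentially the argument of \cite{BCD} (which the paper simply quotes, offering no proof of its own): the support property of the remainder restricts the contributing indices to $j\geq q-N_0$, H\"older in $L^p$ combined with H\"older on the sequences $\{c_k\},\{d_j\}$ produces a unit sequence in $\ell^{r}$ with $\frac{1}{r}=\frac{1}{r_1}+\frac{1}{r_2}$, and Young's convolution inequality with the geometric sequence of $\ell^1$-norm comparable to $C^{|s_1+s_2|+1}/(s_1+s_2)$ gives the stated bound. (One small slip: $r=1$, i.e.\ $\frac{1}{r_1}+\frac{1}{r_2}=1$, does not force $r_1=r_2=1$ — take $r_1=r_2=2$ — but H\"older still gives $\{e_j\}\in\ell^1$, so this is harmless.)

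The second part of your argument has a genuine gap. You assert that the sequence $\{2^{-n(s_1+s_2)}\mathbf 1_{n\geq -N_0}\}$ is summable with $\ell^1$-norm bounded by $C^{|s_1+s_2|+1}$ uniformly ``as soon as $s_1+s_2\geq 0$''. This is false: at $s_1+s_2=0$ the sequence equals $1$ for all $n\geq -N_0$ and its $\ell^1$-norm is infinite, and for small positive $s_1+s_2$ the norm is of size $1/(s_1+s_2)$ — precisely the singular factor you are trying to eliminate, so it cannot be absorbed into a uniform constant. From your own intermediate bound
\begin{equation*}
2^{q(s_1+s_2)}\|\dot{\triangle}_q \dot R(u,v)\|_{L^p}\leq C\sum_{j\geq q-N_0}2^{(q-j)(s_1+s_2)}e_j,\qquad \{e_j\}\in\ell^1,
\end{equation*}
the only conclusion available at the endpoint $s_1+s_2=0$ is the supremum in $q$, i.e.\ membership in $\dot B^{0}_{p,\infty}$; summing in $q$ in $\ell^1$ would count each $e_j$ infinitely often. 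This is not a defect of your method but of the statement being proved: as transcribed in the paper, the second estimate (target $\dot B^{s_1+s_2}_{p,1}$, $s_1+s_2\geq 0$, no $1/(s_1+s_2)$ factor) is a misquotation of \cite{BCD}, Theorem 2.52, whose endpoint case asserts only that if $\frac{1}{r_1}+\frac{1}{r_2}=1$ and $s_1+s_2=0$ then $\|\dot R(u,v)\|_{\dot B^{0}_{p,\infty}}\leq C\|u\|_{\dot B^{s_1}_{p_1,r_1}}\|v\|_{\dot B^{s_2}_{p_2,r_2}}$. Your computation in fact proves exactly that weaker (and correct) statement; note also that the paper only ever applies the lemma with $s_1+s_2>0$, where your first argument suffices.
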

As $s_1+s_2>0$ is a special case of Lemma \ref{T-estimate} and Lemma \ref{R-estimate}, the following lemma is given without proof.

\begin{lem}\label{3.1}
For all $s_1,\ s_2\in \mathbb{R},\ 1\leq p,\ p_1,\ p_2,\ r_1,\ r_2\leq
\infty$ satisfying
\begin{eqnarray*}
  s_1+s_2>0,\  s_j<\frac{N}{p_j}\   or \ s_j=\frac{N}{p_j}\  and \ r_j=1,\  j=1,2, \  p\geq \max(p_1,p_2),
\end{eqnarray*}
we have the product estimate
\begin{eqnarray*}
  \|uv\|_{\dot{B}_{p,r}^s}
  \leq C(s_1+s_2,\frac{N}{p_1}-s_1,\frac{N}{p_2}-s_2)\|u\|_{\dot{B}_{p_1,r_1}^{s_1}}\|v\|_{\dot{B}_{p_2,r_2}^{s_2}},
\end{eqnarray*}
where
\begin{eqnarray*}
  s-\frac{N}{p}=(s_1-\frac{N}{p_1})+(s_2-\frac{N}{p_2}),\ \ \frac{1}{r}=\min\{1,\frac{1}{r_1}+\frac{1}{r_2}\}.
\end{eqnarray*}
\end{lem}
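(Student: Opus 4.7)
The strategy is the standard Bony paraproduct approach, writing
$$uv = \dot{T}_u v + \dot{T}_v u + \dot{R}(u,v),$$
and estimating each piece in $\dot{B}_{p,r}^{s}$ using Lemmas \ref{T-estimate} and \ref{R-estimate}, combined with the Besov embeddings of Remark \ref{2.2}. The hypothesis $s - N/p = (s_1 - N/p_1) + (s_2 - N/p_2)$ is exactly what makes the scalings match after embedding to a common Lebesgue index.

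First I would treat the paraproduct $\dot{T}_u v$. Since $p \geq p_2$, the embedding $\dot{B}_{p_2,r_2}^{s_2} \hookrightarrow \dot{B}_{p,r_2}^{s_2 + N/p - N/p_2}$ of Remark \ref{2.2}(2)(ii) puts $v$ on the target Lebesgue scale. For $u$ there are two sub-cases: if $s_1 < N/p_1$, apply $\dot{B}_{p_1,r_1}^{s_1} \hookrightarrow \dot{B}_{\infty,r_1}^{s_1 - N/p_1}$ and use the second (negative-regularity) part of Lemma \ref{T-estimate} with $t = s_1 - N/p_1 < 0$; if $s_1 = N/p_1$ with $r_1 = 1$, then $\dot{B}_{p_1,1}^{N/p_1} \hookrightarrow L^\infty$ and the first part of Lemma \ref{T-estimate} applies. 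In both cases the arithmetic $(s_2 + N/p - N/p_2) + (s_1 - N/p_1) = s$ yields the correct target index. The term $\dot{T}_v u$ is treated symmetrically.

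For the remainder $\dot{R}(u,v)$ I would split by the value of $1/r_1 + 1/r_2$. When $1/r_1 + 1/r_2 \leq 1$, set $1/p^* = 1/p_1 + 1/p_2$ (so $p^* \leq \min(p_1,p_2) \leq p$) and $1/r^* = 1/r_1 + 1/r_2$, and apply the first assertion of Lemma \ref{R-estimate}, which is available because $s_1 + s_2 > 0$; then embed $\dot{B}_{p^*,r^*}^{s_1+s_2} \hookrightarrow \dot{B}_{p,r^*}^{s}$ via Remark \ref{2.2}(2)(ii), using again the scaling relation. When $1/r_1 + 1/r_2 > 1$ the target index is $r = 1$, and the second assertion of Lemma \ref{R-estimate} gives the bound directly with $r^* = 1$, which is compatible with the embedding step.

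The main nuisance, I expect, is the bookkeeping of endpoint and index cases: tracking the $r$-index through the embeddings so that the prescribed $r = \min\{1, 1/r_1 + 1/r_2\}$ emerges, and handling the boundaries $s_j = N/p_j$, $r_j = 1$ separately (where the $L^\infty$ embedding forces the use of the first rather than the second part of Lemma \ref{T-estimate}). Once these branches are dispatched, summing the three contributions by the triangle inequality and collecting the constants $C^{1+|s+t|}$ from Lemma \ref{T-estimate} and $(s_1+s_2)^{-1} C^{|s_1+s_2|+1}$ from Lemma \ref{R-estimate} yields the explicit constant $C(s_1+s_2,\, N/p_1 - s_1,\, N/p_2 - s_2)$ stated in the lemma.
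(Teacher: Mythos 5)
The paper offers no proof of this lemma --- it states explicitly that the result ``is given without proof'' as a special case of Lemmas \ref{T-estimate} and \ref{R-estimate} --- and your Bony-decomposition argument ($uv=\dot{T}_uv+\dot{T}_vu+\dot{R}(u,v)$, embeddings via Remark \ref{2.2}(2)(ii) to put each factor on the target Lebesgue scale, then the two cited lemmas, with the scaling relation $s-\frac{N}{p}=(s_1-\frac{N}{p_1})+(s_2-\frac{N}{p_2})$ making the indices match) is exactly the intended route and is carried out correctly. The one caveat is the endpoint subcase $s_1=\frac{N}{p_1}$, $r_1=1$, $r_2>1$: there the first part of Lemma \ref{T-estimate} only places $\dot{T}_uv$ in $\dot{B}^{s}_{p,r_2}$, whereas the formula $\frac{1}{r}=\min\{1,\frac{1}{r_1}+\frac{1}{r_2}\}$ would demand $r=1$; this mismatch is a defect of the lemma's statement rather than of your argument, and is immaterial here since the paper only ever invokes the lemma with $r_1=r_2=1$.
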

The following lemma is an application of $s_1+s_2<0$ which frequently emerge in the proof of stability.
\begin{lem}\label{a-theta-product-estimate}
Assume $a\in \dot{B}_{2,1}^{\frac{3}{2}},\ \theta\in
\dot{B}_{2,1}^{\frac{1}{2}}$, then we have
\begin{eqnarray*}\|\nabla(
a\cdot\nabla\theta)\|_{\dot{B}_{2,1}^{-\frac{3}{2}}}\preceq
\|\theta\|_{\dot{B}_{2,1}^{\frac{1}{2}}}\|a\|_{\dot{B}_{2,1}^{\frac{3}{2}}}.\end{eqnarray*}
\end{lem}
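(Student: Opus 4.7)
The plan is to reduce the estimate to a product bound in a negative Besov space, then treat the product by Bony's paraproduct decomposition. Since applying $\nabla$ to a function shifts the regularity index of each dyadic block down by one, Definition \ref{2.1} gives
$$\|\nabla(a\cdot\nabla\theta)\|_{\dot{B}_{2,1}^{-\frac{3}{2}}}\preceq \|a\cdot\nabla\theta\|_{\dot{B}_{2,1}^{-\frac{1}{2}}}.$$
So it suffices to estimate $\|a\cdot\nabla\theta\|_{\dot{B}_{2,1}^{-1/2}}$, noting that $\nabla\theta\in\dot{B}_{2,1}^{-1/2}$ whenever $\theta\in\dot{B}_{2,1}^{1/2}$.

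The main computation is the Bony decomposition
$$a\cdot\nabla\theta=\dot{T}_a\nabla\theta+\dot{T}_{\nabla\theta}a+\dot{R}(a,\nabla\theta),$$
and the goal is to bound each of the three pieces in $\dot{B}_{2,1}^{-1/2}$ by $\|a\|_{\dot{B}_{2,1}^{3/2}}\|\theta\|_{\dot{B}_{2,1}^{1/2}}$. For the first piece I would use the critical embedding $\dot{B}_{2,1}^{3/2}\hookrightarrow L^\infty$ (a consequence of Remark \ref{2.2}(2)(ii) together with $\dot{B}_{\infty,1}^0\hookrightarrow L^\infty$) and then the first half of Lemma \ref{T-estimate} with $s=-1/2$, $p=r=1$, obtaining $\|\dot{T}_a\nabla\theta\|_{\dot{B}_{2,1}^{-1/2}}\preceq \|a\|_{L^\infty}\|\nabla\theta\|_{\dot{B}_{2,1}^{-1/2}}$. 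For the remainder, I would use Lemma \ref{R-estimate} in the case $r=1$ with $(s_1,s_2)=(3/2,-1/2)$, $p_1=p_2=2$, giving $\|\dot{R}(a,\nabla\theta)\|_{\dot{B}_{1,1}^{1}}\preceq \|a\|_{\dot{B}_{2,1}^{3/2}}\|\nabla\theta\|_{\dot{B}_{2,1}^{-1/2}}$, and then absorb the loss in integrability via the embedding $\dot{B}_{1,1}^{1}\hookrightarrow \dot{B}_{2,1}^{-1/2}$ (Remark \ref{2.2}(2)(ii), since $1-3(1-\tfrac12)=-\tfrac12$).

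The hard part will be the second paraproduct $\dot{T}_{\nabla\theta}a$, where $\nabla\theta$ has negative regularity, so the first inequality in Lemma \ref{T-estimate} (which requires $L^\infty$) is not available. The trick is to first use the Sobolev-type embedding $\dot{B}_{2,1}^{-1/2}(\mathbb{R}^3)\hookrightarrow \dot{B}_{\infty,1}^{-2}(\mathbb{R}^3)$ from Remark \ref{2.2}(2)(ii), and then invoke the second inequality of Lemma \ref{T-estimate} with the choice $t=-2<0$ and $s=3/2$, producing output in $\dot{B}_{2,1}^{s+t}=\dot{B}_{2,1}^{-1/2}$; this is precisely the regime $s_1+s_2<0$ alluded to in the paragraph above the statement.

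Summing the three pieces yields $\|a\cdot\nabla\theta\|_{\dot{B}_{2,1}^{-1/2}}\preceq \|a\|_{\dot{B}_{2,1}^{3/2}}\|\theta\|_{\dot{B}_{2,1}^{1/2}}$, and the reduction step closes the proof. The bookkeeping to verify — apart from the use of the negative-$t$ paraproduct bound — is the scaling of the various embeddings, which should match because all of $3/2$, $1/2$, $-1/2$, $-3/2$ differ by integer shifts and the total $L^2$-scaling exponent is preserved.
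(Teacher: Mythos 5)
Your argument is correct and follows essentially the same route as the paper: reduce to bounding $\|a\cdot\nabla\theta\|_{\dot{B}_{2,1}^{-1/2}}$, then apply Bony's decomposition, treating $\dot{T}_a\nabla\theta$ via $\dot{B}_{2,1}^{3/2}\hookrightarrow L^\infty$ and the first part of Lemma \ref{T-estimate}, $\dot{T}_{\nabla\theta}a$ via the embedding into $\dot{B}_{\infty,\cdot}^{-2}$ and the negative-regularity paraproduct bound, and the remainder via Lemma \ref{R-estimate} plus the embedding $\dot{B}_{1,1}^{1}\hookrightarrow\dot{B}_{2,1}^{-1/2}$ (the paper's choices of third indices differ trivially, and your "$p=r=1$" for the first paraproduct is a harmless slip for $p=2$, $r=1$, as your displayed inequality shows).
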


\begin{proof}
From Remark \ref{2.2} (3), we have
\begin{eqnarray*}\|\nabla(
a\cdot\nabla\theta)\|_{\dot{B}_{2,1}^{-\frac{3}{2}}}\preceq\|
a\cdot\nabla\theta\|_{\dot{B}_{2,1}^{-\frac{1}{2}}}.
\end{eqnarray*}
Using Remark \ref{2.2}, Lemma \ref{T-estimate} and Lemma \ref{R-estimate}, we get \begin{eqnarray*} \|T_{ a}
\nabla\theta\|_{\dot{B}_{2,1}^{-\frac{1}{2}}}\leq
C\|a\|_{\infty}\|\nabla\theta\|_{\dot{B}_{2,1}^{-\frac{1}{2}}} \leq
C\|a\|_{\dot{B}_{2,1}^{\frac{3}{2}}}\|\nabla\theta\|_{\dot{B}_{2,1}^{-\frac{1}{2}}},
\end{eqnarray*}
\begin{eqnarray*}
\|T_{\nabla \theta} a\|_{\dot{B}_{2,1}^{-\frac{1}{2}}}\leq C\|\nabla \theta\|_{\dot{B}_{\infty,2}^{-2}}\| a\|_{\dot{B}_{2,2}^{\frac{3}{2}}}
\leq C\| \nabla\theta\|_{\dot{B}_{2,1}^{-\frac{1}{2}}}\|a\|_{\dot{B}_{2,1}^{\frac{3}{2}}}
\end{eqnarray*}
and
\begin{eqnarray*}
\|R(a,\nabla\theta)\|_{\dot{B}_{2,1}^{-\frac{1}{2}}}&\leq & C\|R( a,\nabla\theta)\|_{\dot{B}_{1,2}^{1}}\\
&\leq & C\|a\|_{\dot{B}_{2,2}^{\frac{3}{2}}}\|\nabla \theta\|_{\dot{B}_{2,2}^{-\frac{1}{2}}}\\
&\leq & C\|a\|_{\dot{B}_{2,1}^{\frac{3}{2}}}\|\theta\|_{\dot{B}_{2,1}^{\frac{1}{2}}}.
\end{eqnarray*}
Combining the above three inequalities, we can easily get
\begin{eqnarray*}
\|\nabla (a \nabla \theta)\|_{\dot{B}_{2,1}^{-\frac{3}{2}}}
\leq \|a \nabla \theta\|_{\dot{B}_{2,1}^{-\frac{1}{2}}}\preceq \| \theta\|_{\dot{B}_{2,1}^{\frac{1}{2}}}\|a\|_{\dot{B}_{2,1}^{\frac{3}{2}}}.
\end{eqnarray*}
\end{proof}

We give the definition of the Chemin-Lerner type spaces $\tilde{L}_T^\rho(\dot{B}^s_{p,r})$.
\begin{defi}\label{2.3}
For $s\in \mathbb{R}$, $(r,\rho,p)\in[1,+\infty]^3$ , and $T\in(0,+\infty]$, we define
$\tilde{L}_T^\rho(\dot{B}^s_{p,r})$  as the
completion of $C([0,T], S(\mathbb{R}^3))$ under the norm
\begin{eqnarray*}
  \|f\|_{\tilde{L}_T^\rho(\dot{B}^s_{p,r})}
    =\left(\sum_{q\in \mathbb{Z}}2^{qrs}\left(\int_0^T\|\dot{\triangle}_q f(t)\|_{L^p}^\rho dt\right)^{\frac{r}{\rho}}\right)^{\frac{1}{r}}
    <\infty,
\end{eqnarray*}
with the usual change if $r = \infty$.
\end{defi}

\begin{re}\label{2.8}
(i) The space may be linked with the more classical spaces $L_T^\rho(\dot{B}_{p,r}^s)=L^\rho([0.T];\dot{B}_{p,r}^s)$  via the Minkowski's inequality. We have
\begin{eqnarray*}
  \|f\|_{\tilde{L}_T^\rho(\dot{B}_{p,r}^s)}
 \preceq\|f\|_{L_T^\rho(\dot{B}_{p,r}^s)}     \ \  \mbox{if} \ \  \rho\leq r
\end{eqnarray*}
and
\begin{eqnarray*}
  \|f\|_{L_T^\rho(\dot{B}_{p,r}^s)}\preceq \|f\|_{\tilde{L}_T^\rho(\dot{B}_{p,r}^s)}      \ \  \mbox{if} \ \  r\leq \rho.
\end{eqnarray*}

(ii) All the properties of continuity for the product, composition,
remainder and paraproduct of $L_T^\rho(\dot{B}_{p,r}^s)$ may be
easily generalized to the spaces
$\tilde{L}_T^\rho(\dot{B}_{p,r}^s)$. The general principle is that
the time exponent $\rho$ behaves according to H\"{o}lder inequality.
\end{re}

The estimate in $\tilde{L}_T^\rho(\dot{B}^s_{p,r})$ of the following transport equation
\begin{eqnarray}\label{t8}
\left\{
\begin{array}{lll}
   \partial_t f+ v\cdot\nabla f-\Delta f+\lambda\nabla\Pi=g,\\
   f|_{t=0}=f_0
\end{array}
\right.
\end{eqnarray}
will be used in the proof of main results. To derive the estimate for the transport equation (\ref{t8}), we need the following commutator estimate.
\begin{lem}\label{3.2}
Let $r\in [1,\infty], \ f\in \dot{B}_{2,r}^s(\mathbb{R}^3)$, and
$v\in\dot{B}_{2,r}^{\frac{5}{2}}(\mathbb{R}^3)$ with $\mathrm{div}
v=0$. Then there exists a sequence $\{c_{q,r}\}\subset l^r(\mathbb{Z})$ satisfying
$\|c_{q,r}\|_{l^r}=1$ and

(i) If $-\frac{5}{2}<s<\frac{5}{2}$ (or $s=\frac{5}{2}$ with $r=1$),
\begin{eqnarray*}
  \|[\dot{\triangle}_q, v\cdot \nabla]f\|_2\preceq c_{q,r}2^{-sq}\|v\|_{\dot{B}_{2,r}^{\frac{5}{2}}}\|f\|_{\dot{B}_{2,r}^s}.
\end{eqnarray*}

(ii) If $s>-1$,
\begin{eqnarray*}
  \|[\dot{\triangle}_q, v\cdot \nabla]v\|_2\preceq c_{q,r}2^{-sq}\|\nabla v\|_{L^\infty}\|v\|_{\dot{B}_{2,r}^s}.
\end{eqnarray*}
\end{lem}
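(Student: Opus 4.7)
The plan is to apply the standard commutator technique in homogeneous Besov spaces (cf.\ Lemma~2.100 of \cite{BCD}) based on Bony's paraproduct decomposition. Since $\mathrm{div}\,v=0$, I rewrite $v\cdot\nabla f=\partial_j(v^j f)$ and decompose
\[ v\cdot\nabla f=\dot{T}_{v^j}\partial_j f+\dot{T}_{\partial_j f}v^j+\partial_j\dot{R}(v^j,f), \]
where the last identity relies on the divergence-free condition to transfer a derivative onto the remainder. Commuting $\dot{\triangle}_q$ through produces a splitting
\[ [\dot{\triangle}_q,v\cdot\nabla]f=I_q+II_q+III_q, \]
with $I_q=\sum_{|q'-q|\le 4}[\dot{\triangle}_q,\dot{S}_{q'-1}v^j]\partial_j\dot{\triangle}_{q'}f$, $II_q$ gathering the low-high paraproduct terms $\dot{\triangle}_q\dot{T}_{\partial_j f}v^j-\dot{T}_{\partial_j\dot{\triangle}_q f}v^j$, and $III_q=\partial_j\dot{\triangle}_q\dot{R}(v^j,f)-\dot{R}(v^j,\partial_j\dot{\triangle}_q f)$.

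For $I_q$, I apply the classical commutator trick: writing $\dot{\triangle}_q u=2^{3q}h(2^q\cdot)\ast u$ for a fixed Schwartz function $h$ and Taylor-expanding $\dot{S}_{q'-1}v^j$ to first order, one gains a factor $2^{-q}\|\nabla\dot{S}_{q'-1}v\|_\infty$, so that
\[ \|I_q\|_2\lesssim\sum_{|q'-q|\le 4}\|\nabla\dot{S}_{q'-1}v\|_\infty\,\|\dot{\triangle}_{q'}f\|_2. \]
For part (i) I control $\|\nabla\dot{S}_{q'-1}v\|_\infty$ using Bernstein's inequality at each low dyadic block together with Remark~\ref{2.2}(3), producing a bound $\lesssim\|v\|_{\dot{B}^{5/2}_{2,r}}$; for part (ii) the bound $\|\nabla v\|_\infty$ is available directly. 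The term $II_q$ is treated in the same spirit, using the spectral localization of the low-high paraproduct and Bernstein. For $III_q$ the derivative gained from $\mathrm{div}\,v=0$ is indispensable: estimating $\|\dot{\triangle}_k v\|_\infty$ either by $2^{-3k/2}\|v\|_{\dot{B}^{5/2}_{2,r}}$ (case (i)) or by $2^{-k}\|\nabla v\|_\infty$ (case (ii)), the remainder sum $\sum_{k\geq q-3}$ converges precisely when $s+5/2>0$ in (i) and $s+1>0$ in (ii), producing the claimed lower thresholds on $s$.

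Multiplying by $2^{qs}$ and taking the $\ell^r$ norm in $q$, each convolution-type sum over $q'$ is absorbed via discrete Young's inequality, and the dyadic coefficients of $v$ and $f$ are grouped into a single normalized sequence $c_{q,r}$ with $\|c_{q,r}\|_{\ell^r}=1$, yielding the stated estimates. The main obstacle is the book-keeping around the sharp restrictions on $s$: the lower bounds $s>-5/2$ in (i) and $s>-1$ in (ii) both originate from $III_q$ and depend crucially on the divergence-free property, while the upper endpoint $s=5/2$, $r=1$ of (i) requires a careful appeal to the definition of the $\dot{B}^{5/2}_{2,1}$-norm because the usual $\ell^1$ summation in the high-frequency tail is lost there.
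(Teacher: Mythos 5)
The paper itself offers no proof of Lemma \ref{3.2}: it is quoted as a known commutator estimate (essentially Lemma 2.100 of \cite{BCD} and its analogues in \cite{AGZ,L-L}), so your proposal can only be judged on its own terms; the route you choose (Bony decomposition with the divergence-free condition used to write the remainder as $\partial_j\dot R(v^j,f)$, first-order Taylor expansion for the paraproduct commutator, Bernstein for the rest) is indeed the standard one.

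There is, however, a genuine gap in your treatment of the remainder term $III_q$ in case (i). Your stated block estimate is off: in $\mathbb{R}^3$, Bernstein gives $\|\dot\triangle_k v\|_{\infty}\lesssim 2^{3k/2}\|\dot\triangle_k v\|_2\lesssim c_{k,r}2^{-k}\|v\|_{\dot B^{5/2}_{2,r}}$, not $2^{-3k/2}\|v\|_{\dot B^{5/2}_{2,r}}$. More importantly, any estimate that puts the $v$-blocks in $L^\infty$ and the $f$-blocks in $L^2$ yields, after multiplying by $2^{qs+q}$, a tail of the form $\sum_{k\ge q-3}2^{(q-k)(s+1)}(\cdots)$, which converges only for $s>-1$ (or $s>-3/2$ with your exponent); it cannot produce the claimed range $s>-\frac{5}{2}$. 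To reach $-\frac{5}{2}$ you must estimate the remainder in $L^1$, keeping both factors in $L^2$, i.e. $\|\partial_j\dot\triangle_q\dot R(v^j,f)\|_2\lesssim 2^{q}\,2^{3q/2}\sum_{k\ge q-3}\|\dot\triangle_k v\|_2\|\tilde{\dot\triangle}_k f\|_2$, where the extra $2^{3q/2}$ comes from Bernstein $L^1\to L^2$ on the output block; this gives the convolution kernel $2^{(q-k)(s+5/2)}$ and hence exactly the threshold $s>-\frac52$. A secondary caveat: your bound $\|\nabla\dot S_{q'-1}v\|_\infty\lesssim\|v\|_{\dot B^{5/2}_{2,r}}$ in $I_q$ (and likewise the low-frequency sum in $II_q$ at the endpoint $s=\frac52$) only follows by summing the coefficients $c_{k,r}$, which is legitimate for $r=1$ but not for $r>1$; this looseness is inherited from the statement of the lemma itself, and is harmless for the paper since the lemma is only applied with $r=1$, but you should either restrict to $r=1$ or keep the low-frequency sums and absorb them by discrete Young's inequality at the $\ell^r$ stage rather than blockwise.
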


We give the estimates for the transport equation (\ref{t8}).
\begin{lem}\label{prop:estimate tranport equation}\cite{L-L}
Assume that $s\in[-5/2,5/2]$. Let $v\in
L_T^1(\dot{B}_{2,1}^{5/2}) $ be a divergence-free vector field.
If $\lambda\neq 0$, we also assume that $\mathrm{div} f=0$. For $f_0\in\dot{B}_{2,1}^{s}$ and
$g\in L_T^1(\dot{B}_{2,1}^{s})$, let $f\in
L_T^1(\dot{B}_{2,1}^{s+2})$  and $\nabla\Pi\in
L_T^1(\dot{B}_{2,1}^{s})$ solve (\ref{t8}). Then we have, for $t\in (0,T)$
\begin{eqnarray} \label{t9}
  &&\|f\|_{\tilde{L}_t^\infty(\dot{B}_{2,1}^{s})}+\|f\|_{L_t^1(\dot{B}_{2,1}^{s+2})}
     +\lambda\|\nabla\Pi\|_{L_t^1(\dot{B}_{2,1}^{s})}\nonumber\\
  &&\leq \|f_0\|_{\dot{B}_{2,1}^s}+C(\int_0^t
    \| v(t')\|_{\dot{B}_{2,1}^{\frac{5}{2}}}\|f\|_{\tilde{L}^\infty_{t'}(\dot{B}_{2,1}^{s})}dt')
    +\|g\|_{L^1_t(\dot{B}_{2,1}^s)}.
\end{eqnarray}
In the case $f=v$ and $-1<s<5/2$, we have, for $t\in (0,T)$
\begin{eqnarray}\label{t10}
  && \|f\|_{\tilde{L}_t^\infty(\dot{B}_{2,1}^s)}+\|f\|_{L_t^1(\dot{B}_{2,1}^{s+2})}+\|\nabla\Pi\|_{L_t^1(\dot{B}_{2,1}^{s})}\nonumber\\
  && \leq\|f_0\|_{\dot{B}_{2,1}^s}+C(\int_0^t\|\nabla
     v(t')\|_{L^\infty}\|f\|_{\tilde{L}^\infty_{t'}(\dot{B}_{2,1}^{s})}dt')+\|g\|_{L^1_t(\dot{B}_{2,1}^s)}.
\end{eqnarray}
\end{lem}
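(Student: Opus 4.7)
The plan is to perform a dyadic Littlewood--Paley analysis on (\ref{t8}), deriving an $L^2$ energy estimate at each dyadic block, employing the commutator estimates of Lemma \ref{3.2} to tame the transport term, and handling the pressure (when $\lambda\neq 0$) by an elliptic relation coming from the divergence-free constraint on $f$. Summing the dyadic estimates against the weights $2^{qs}$ that define the $\dot{B}_{2,1}^s$ norm and closing with a Gronwall-type argument in the Chemin--Lerner framework will yield both (\ref{t9}) and (\ref{t10}).

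First I would apply $\dot{\triangle}_q$ to (\ref{t8}) to obtain $\partial_t f_q + v\cdot\nabla f_q - \Delta f_q + \lambda\nabla\dot{\triangle}_q\Pi = \dot{\triangle}_q g + R_q$, where $f_q:=\dot{\triangle}_q f$ and $R_q:=[v\cdot\nabla,\dot{\triangle}_q]f$. Testing against $f_q$ in $L^2$, the transport term vanishes because $\mathrm{div}\,v=0$, while Bernstein's inequality converts the diffusion into a damping term of order $c\,2^{2q}\|f_q\|_2^2$. After absorbing the $L^2$ pairing and dividing by $\|f_q\|_2$, one obtains a scalar inequality that, upon integration in time, multiplication by $2^{qs}$, and summation in $\ell^1(\mathbb{Z})$ over $q$, gives the bulk of (\ref{t9}); here part (i) of Lemma \ref{3.2} controls the commutator piece by $\int_0^t\|v(t')\|_{\dot{B}_{2,1}^{5/2}}\|f\|_{\tilde{L}^\infty_{t'}(\dot{B}_{2,1}^{s})}\,dt'$ on the range $-5/2<s\leq 5/2$, with the missing endpoint $s=-5/2$ recovered by a density or duality argument.

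For the pressure, when $\lambda\neq 0$ the hypothesis $\mathrm{div}\,f=0$ allows me to take the divergence of (\ref{t8}) and obtain the Poisson relation $\lambda\Delta\Pi=\mathrm{div}\,g-\mathrm{div}(v\cdot\nabla f)$, so that $\nabla\Pi$ is recovered from $g$ and the quadratic term $v\cdot\nabla f$ via Riesz transforms, which act continuously on every $\dot{B}_{2,1}^s$. The quadratic term is controlled in $\dot{B}_{2,1}^s$ via Lemma \ref{3.1} by $\|v\|_{\dot{B}_{2,1}^{5/2}}\|f\|_{\dot{B}_{2,1}^s}$ on the relevant range, and this contribution is absorbed into the commutator estimate; integrated in time it produces the $\lambda\|\nabla\Pi\|_{L^1_t(\dot{B}_{2,1}^s)}$ term on the left-hand side of (\ref{t9}).

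For the refined estimate (\ref{t10}) in the special case $f=v$, the only change is to invoke part (ii) of Lemma \ref{3.2}, which requires $s>-1$ but gives the sharper commutator bound $\|\nabla v\|_{L^\infty}\|v\|_{\dot{B}_{2,1}^s}$ in place of $\|v\|_{\dot{B}_{2,1}^{5/2}}\|v\|_{\dot{B}_{2,1}^s}$; all other steps are unchanged. I expect the main technical obstacle to be the uniform-in-$q$ bookkeeping in the final $\ell^1$ summation: one must ensure that the summable sequences $\{c_{q,r}\}$ produced by Lemmas \ref{3.2} and \ref{3.1} combine so that the resulting constant does not blow up at the endpoints $s=\pm 5/2$ (respectively $s=5/2$), and this is precisely where the $\tilde{L}^\infty_t$ norm of Chemin--Lerner becomes essential, since it avoids the loss one would otherwise incur through Minkowski's inequality in the classical $L^\infty_t(\dot{B}_{2,1}^s)$ setting.
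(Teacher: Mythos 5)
The paper itself does not prove this lemma --- it is quoted from \cite{L-L} --- so your sketch has to be measured against the standard argument in that reference: dyadic $L^2$ estimates on \eqref{t8}, the commutator bounds of Lemma \ref{3.2}, elliptic recovery of the pressure, and a Gronwall argument in the Chemin--Lerner framework. Your treatment of the transport term, of the dissipation via Bernstein, and of \eqref{t10} through Lemma \ref{3.2}(ii) follows exactly that scheme.

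The genuine gap is in the pressure step. The bound you invoke, $\|v\cdot\nabla f\|_{\dot{B}_{2,1}^{s}}\lesssim\|v\|_{\dot{B}_{2,1}^{5/2}}\|f\|_{\dot{B}_{2,1}^{s}}$, is not a consequence of Lemma \ref{3.1} (which requires $s_1\leq \frac{N}{p_1}=\frac32$, whereas you would need $s_1=\frac52$) and is in fact false: in the paraproduct $\dot{T}_v\nabla f$ one cannot avoid paying a full derivative on $f$, and the factor $\|v\|_{L^\infty}$ (or $\|v\|_{\dot{B}_{2,1}^{3/2}}$) that would replace it is not controlled by the homogeneous norm $\|v\|_{\dot{B}_{2,1}^{5/2}}$. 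The one-derivative gain that lets \eqref{t9} close is obtained by using \emph{both} divergence-free conditions before inverting the Laplacian: since $\operatorname{div}f=\operatorname{div}v=0$, one has $\lambda\Delta\Pi=\operatorname{div}g-\partial_i v^j\partial_j f^i$, hence $\lambda\|\nabla\Pi\|_{\dot{B}_{2,1}^{s}}\lesssim\|\partial_i v^j\partial_j f^i\|_{\dot{B}_{2,1}^{s-1}}+\|g\|_{\dot{B}_{2,1}^{s}}$, and the product of $\nabla v\in\dot{B}_{2,1}^{3/2}$ with $\nabla f\in\dot{B}_{2,1}^{s-1}$ (paraproduct plus remainder, using $\dot{B}_{2,1}^{3/2}\hookrightarrow L^\infty$ for $\nabla v$) is what yields the admissible bound $\|v\|_{\dot{B}_{2,1}^{5/2}}\|f\|_{\dot{B}_{2,1}^{s}}$; in the case $f=v$ the same identity gives the $\|\nabla v\|_{L^\infty}$ bound of \eqref{t10} and is one source of the restriction $s>-1$. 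Without this structural use of the divergence-free conditions your estimate of $\nabla\Pi$ does not close in terms of $\|f\|_{\dot{B}_{2,1}^{s}}$ alone. A secondary point: the endpoint $s=-\frac52$ lies outside Lemma \ref{3.2}(i), and ``a density or duality argument'' does not recover it --- the obstruction is the remainder term, which needs $s+\frac52>0$; this endpoint would require a separate (e.g.\ $r=\infty$ or logarithmic) variant, though the paper only ever applies the lemma for $s\geq-\frac32$.
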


\section{Existence and decay of weak solutions}
\setcounter{equation}{0}

In this section we construct the weak solution for (\ref{rewrite-boussinesq}) with $\kappa=1$,  $\nu=0$. We begin by introducing a mollified Boussinesq system constructed as in \cite{BS}. We first recall the definition of the ``retarded mollifier". Let $\psi(x,t)\in C^\infty(\mathbb{R}^3\times\mathbb{R})$ such that
\begin{eqnarray*}
\psi\geq 0,\quad\int_0^\infty\int \psi dxdt=1,\quad {\rm supp} \psi \subset \{(x,t): |x|^2<t, 1<t<2\}.
\end{eqnarray*}
For $T>0$ and $u\in L^2(0,T; L^2_{\boldsymbol{\sigma}})$, let
$\bar{u}:\mathbb{R}^3\times\mathbb{R}\rightarrow\mathbb{R}^3$ be
\begin{eqnarray*}
    \bar{u}=
    \left\{
    \begin{array}{lll}
    u(x,t)\  \  \  \  && \mathrm{if} \ (x,t)\in \mathbb{R}^3\times (0,T),\\
    0 \ \ \    \ && \mathrm{otherwise}.
    \end{array}
    \right.
\end{eqnarray*}
Let $\delta=T/n$. We set
\begin{eqnarray*}
    \Psi_\delta(u)(x,t)=\delta^{-4}\int_{\mathbb{R}^4}\psi\left(\frac{y}{\delta},\frac{\tau}{\delta}\right)\bar{u}(x-y,t-\tau)dyd\tau.
\end{eqnarray*}
Consider, for $n = 1, 2, \cdots$ and $\delta = T/n$, the mollified
Cauchy problem
\begin{eqnarray} \left\{\begin{array}{lll}
 \medskip
\partial_t a^n + \Psi_\delta (u^{n-1})\cdot\nabla a^n=0, \qquad (t,x)\in \mathbb{R^+}\times\mathbb{R}^3,\\
 \medskip
\partial_t \theta^n+ \Psi_\delta (u^{n-1})\cdot\nabla \theta^n-\Delta\theta^n+\nabla (a^n\nabla\theta^n)=0,\\
 \medskip
\partial_t u^{n}+ \nabla(\Psi_\delta (u^{n})\otimes u^n)-(1+a^n) \Delta u^n+ (1+a^n)\nabla\Pi^n=\theta^n e_3,\\
\medskip
\mathrm{div}u^n=0
 \end{array}\right.\label{mollified-boussinesq}
 \end{eqnarray}
with initial data
\begin{eqnarray*}
  a^n|_{t=0}=a_0,\ \ \ \theta^n|_{t=0}=\theta_0\ \  \ \ and \ \  \ \ u^n|_{t=0}=u_0.
\end{eqnarray*}
The iteration scheme starts with $u^0 = 0$. Note that since ${\rm
div} u = 0$, we also have $\mathrm{div}(\Psi_\delta(u^{n})) = 0$,
for $t\in \mathbb{R}^+$. At each step $n$, one solves recursively
$n+1$ linear equations: first one solves the transport-diffusion
equation (with smooth convective velocity) for the density and
temperature; after $a^n, \theta^n$ is computed, solving the third part
of (\ref{mollified-boussinesq}) amounts to solving a linear equation
on each strip $\mathbb{R}^3 \times (m\delta, (m+1)\delta)$, for $m
= 0, 1, \cdots , n-1$. So we can get the existence of the mollified
Boussinesq system (\ref{mollified-boussinesq}) like the Proposition
3.1 in \cite{BS}. Then there is no additional difficulty in the
passage to the limit in the non-linear terms arises in the equation
of the temperature other than those already existing for the
Navier-Stokes equations. Hence, the distributional limit $(a,\theta,
u, \Pi)$ of a convergent subsequence of $(a^n, \theta^n, u^n,
\Pi^n)$ is a weak solution of the Boussinesq system.

As a consequence of the above arguments, we obtain the existence of weak solution to system (\ref{rewrite-boussinesq}) with $\kappa=1$, $\nu=0$. The results of following Proposition are established for (\ref{mollified-boussinesq}). For the sake of convenience, we simply denote the solutions $(a^n; \theta^n; u^n; \Pi^n)$ by $(a; \theta; u; \Pi)$.
\begin{prop}\label{1.1}
Let $m\leq a_0, \ (a_0,\theta_0, u_0)\in{B}_{2,1}^{\frac{3}{2}}\times L^2\times L^2_{\sigma}$, and there exist two absolute constants $\eta_0>0, \ C_1>0$  such that if
\begin{eqnarray}
\|a_0\|_{\dot{B}_{2,1}^{\frac{3}{2}}}<\eta_0
\label{a-satisfy}\end{eqnarray}
and
\begin{eqnarray}\label{u-satisfy}
 \int_0^\infty\|\nabla u\|_{\infty}d\tau<M_1,
\end{eqnarray}
then there exists a weak solution $(a, \theta, u)$ of the Boussinesq system
(\ref{rewrite-boussinesq}) for $\kappa=1,\nu=0$ with initial data
$(a_0, \theta_0, u_0)$, such that for any $T>0$,
\begin{eqnarray*}
&&a\in L^\infty(0,T;L^\infty)\cap L^\infty(0,T;W^{1,3})\cap L^\infty(0,T;L^2),\\
&&\theta\in L^2(0,T; H^1)\cap L^\infty(0,T;L^2_{\sigma}),\\
&&u\in L^2(0,T; V)\cap L^\infty(0,T;L^2_{\sigma}).
\end{eqnarray*}
Moreover, such solution satisfies, for all $t \in [0, T]$,  the energy inequalities
\begin{eqnarray*}
&&m\leq a \leq M,\ \|a\|_{2}\leq C\|a_0\|_{2};\\
&&\|a(t)\|_{W^{1,3}}\leq C(\eta_0+\eta_0^{\frac{1}{3}}\|a_0\|_2^{\frac{2}{3}});\\
&&\|\theta(t)\|_2^2+2c_0\int_0^t\|\nabla\theta(s)\|_2^2 \mathrm{d}s\leq\|\theta_0\|_2^2;\\
&&\|u(t)\|_2^2+{2(1+M)}\int_0^t\|\nabla u(s)\|_2^2\mathrm{d}s\leq C(\|u_0\|_2^2+t^2\|\theta_0\|_2^2),
\end{eqnarray*}
for all $t\geq 0$ and some constant $ c_0,\ C>0,\ M=C\eta_0$.
\end{prop}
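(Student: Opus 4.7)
\medskip
\noindent\textbf{Proof proposal for Proposition \ref{1.1}.} The existence of solutions to the linearised mollified system \eqref{mollified-boussinesq} at each iteration level follows from the retarded-mollifier construction sketched in the paragraph preceding the statement, in the manner of Beir\~ao--Silvestre. The real content is therefore the a~priori estimates, which I plan to derive uniformly in $n$ on the iterates $(a^n,\theta^n,u^n,\Pi^n)$ (written $(a,\theta,u,\Pi)$ below) and then pass to the limit.

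\smallskip
\noindent\emph{Step 1: Transport estimates for $a$.} The density solves a pure transport equation driven by the divergence-free field $\Psi_\delta(u^{n-1})$, so $L^p$ norms are preserved along the flow; this gives $m\leq a\leq M$, $\|a(t)\|_2=\|a_0\|_2$, and the maximum principle. Next I would use the standard transport estimate in the critical Besov space to obtain
\begin{equation*}
\|a(t)\|_{\dot{B}_{2,1}^{3/2}}\leq \|a_0\|_{\dot{B}_{2,1}^{3/2}}\exp\Bigl(C\!\int_0^t\|\nabla u\|_\infty\,d\tau\Bigr)\leq C\eta_0 e^{CM_1}=:C\eta_0,
\end{equation*}
where hypotheses \eqref{a-satisfy}, \eqref{u-satisfy} are both used. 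The embedding $\dot{B}^{3/2}_{2,1}\hookrightarrow L^\infty$ then fixes $M=C\eta_0$. For the $W^{1,3}$ bound I would combine the embedding $\dot{B}^{3/2}_{2,1}\hookrightarrow \dot{B}^{1}_{3,1}$ (which controls $\|\nabla a\|_3$ by $C\eta_0$) with the interpolation $\|a\|_3\leq C\|a\|_2^{2/3}\|a\|_\infty^{1/3}\leq C\eta_0^{1/3}\|a_0\|_2^{2/3}$.

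\smallskip
\noindent\emph{Step 2: Energy estimate for $\theta$.} I would test the $\theta$-equation against $\theta$ itself. Because $\mathrm{div}\,\Psi_\delta(u^{n-1})=0$, the transport term drops out. Integration by parts on the variable-coefficient diffusion yields
\begin{equation*}
\tfrac{1}{2}\tfrac{d}{dt}\|\theta\|_2^2+\int_{\mathbb{R}^3}(1-a)|\nabla\theta|^2\,dx=0,
\end{equation*}
and choosing $\eta_0$ small enough that $1-M\geq c_0>0$ closes the estimate and produces the claimed energy inequality.

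\smallskip
\noindent\emph{Step 3: Energy estimate for $u$.} Testing the $u$-equation against $u$, the convection term $\int\nabla(\Psi_\delta(u)\otimes u)\cdot u\,dx$ vanishes thanks to $\mathrm{div}\,\Psi_\delta(u)=0$. For the viscous term I would integrate by parts to get $\int(1+a)|\nabla u|^2\,dx+\int(\nabla a\cdot\nabla u)\cdot u\,dx$; the cross term is controlled by $\|\nabla a\|_3\|\nabla u\|_2\|u\|_6\leq C\eta_0\|\nabla u\|_2^2$ via Sobolev embedding and is absorbed into the good term once $\eta_0$ is small, giving a lower bound $(1+M)^{-1}$ essentially (after also handling the pressure contribution $\int(1+a)\nabla\Pi\cdot u\,dx$, which is the main obstacle). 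The source $\int\theta e_3\cdot u\,dx\leq \|\theta\|_2\|u\|_2\leq\|\theta_0\|_2\|u\|_2$ combined with Gronwall produces
\begin{equation*}
\|u(t)\|_2\leq C(\|u_0\|_2+t\|\theta_0\|_2),
\end{equation*}
which, squared, is the claim. The accompanying $L^2_t H^1_x$ bound follows by integrating the differential inequality.

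\smallskip
\noindent\emph{Main obstacle and Step 4.} The subtle point is the pressure term $\int(1+a)\nabla\Pi\cdot u\,dx$: since $(1+a)u$ is not divergence-free, the usual vanishing argument fails and one must instead derive an elliptic equation for $\Pi$ by taking divergence of the momentum equation, bound $\nabla\Pi$ in $L^2_t L^2_x$ via the smallness of $\|a\|_{W^{1,3}}$ (perturbative ellipticity), and only then close the $u$-estimate. Once all uniform bounds are in hand, passage to the limit in $n$ is straightforward: the $L^\infty_t L^2_x\cap L^2_t H^1_x$ controls on $(\theta^n,u^n)$ together with Aubin--Lions compactness (using time derivative estimates coming back from the equations) allow strong convergence in the nonlinear terms, and the $W^{1,3}$ bound on $a^n$ permits the limit in $\nabla(a^n\nabla\theta^n)$ and $(1+a^n)\Delta u^n$. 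The limit $(a,\theta,u,\Pi)$ satisfies \eqref{rewrite-boussinesq} with $\kappa=1,\nu=0$ in the weak sense and inherits all the stated energy inequalities.
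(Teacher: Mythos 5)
The central gap is in your Steps 3--4. The paper never needs an elliptic estimate for $\Pi$ at the energy level: it multiplies the momentum equation by $\rho=\frac{1}{1+a}$ (equivalently, works with the weighted energy $\bigl\|\frac{u}{\sqrt{1+a}}\bigr\|_2^2$, using the transport equation for $a$), so that the viscous term contributes exactly $\|\nabla u\|_2^2$ and the pressure contributes $\int\nabla\Pi\cdot u\,dx=0$ because $\mathrm{div}\,u=0$; the inequality $\|u(t)\|_2^2+2(1+M)\int_0^t\|\nabla u\|_2^2\,ds\leq C(\|u_0\|_2^2+t^2\|\theta_0\|_2^2)$ then follows from $m\leq a\leq M$ and $\|\theta\|_2\leq\|\theta_0\|_2$. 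Your substitute --- testing the equation with $u$ directly and controlling $\int(1+a)\nabla\Pi\cdot u\,dx$ by ``taking the divergence of the momentum equation and bounding $\nabla\Pi$ in $L^2_tL^2_x$ perturbatively'' --- does not close as sketched: the divergence of the momentum equation produces terms like $\nabla a\cdot\Delta u$ (and the paper's own pressure bound \eqref{pressure} uses $\partial_t u$ and $\Delta u$ in $L^2$), i.e.\ quantities that are only controlled by the second-level estimates of Lemma \ref{u-v-tidu}, which themselves presuppose the basic energy bound you are trying to prove. So the ``main obstacle'' you correctly identify is left unresolved rather than resolved; the missing idea is precisely the weighted (density-form) energy.

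A secondary problem is in Step 1: you claim $\|a(t)\|_{\dot{B}_{2,1}^{3/2}}\leq\|a_0\|_{\dot{B}_{2,1}^{3/2}}\exp\bigl(C\int_0^t\|\nabla u\|_\infty d\tau\bigr)$. Propagation of Besov regularity of index $3/2>1$ is not available with only $\nabla u\in L^1_tL^\infty$: the relevant commutator estimates (cf.\ Lemma \ref{3.2}) require $\nabla u$ in a space of regularity $s-1$, and indeed the paper only propagates this norm later, in Lemma \ref{4.2}, under the extra hypothesis \eqref{u-satisfy-1}. Fortunately Proposition \ref{1.1} does not need it: as in the paper, $\|a\|_\infty$ and $\|a\|_2$ are conserved along the divergence-free flow, and $\|\nabla a\|_3$ obeys $\frac{d}{dt}\|\nabla a\|_3\leq\|\nabla u\|_\infty\|\nabla a\|_3$, so Gronwall with \eqref{u-satisfy} suffices, the embedding $\dot{B}_{2,1}^{3/2}\hookrightarrow\dot{B}_{3,1}^{1}\hookrightarrow L^\infty$ being applied only to the initial data. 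Your Step 2 for $\theta$ is essentially the paper's argument and is fine, but the velocity step must be redone along the lines above before the proof can be considered complete.
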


\begin{proof}
We only prove the energy inequalities. According to the transport equation in
(\ref{rewrite-boussinesq}) and $\mathrm{div}u=0$, we have
\begin{eqnarray} \|a\|_2\leq C \|a_0\|_2;\ m\leq a\leq\|a\|_\infty \leq C\|a_0\|_\infty\leq C\|
a_0\|_{\dot{B}_{2,1}^{\frac{3}{2}}}\leq C\eta_0, \label{a1}\end{eqnarray}
\begin{eqnarray*}
\|\nabla a\|_{3}\leq \|
\nabla a_0\|_{3}+C\int_0^t \|\nabla
u\|_{\infty}\|
\nabla a\|_{3}d\tau. \end{eqnarray*}
 Applying the Gronwall
inequality and (\ref{u-satisfy}) to the above inequality implies
\begin{eqnarray}\|\nabla a\|_{3}\leq C\|\nabla a_0\|_3\exp\{\int_0^\infty\|\nabla u\|_\infty dt\}\leq C\|
 a_0\|_{\dot{B}_{2,1}^{\frac{3}{2}}}\leq C\eta_0, \label{a}\end{eqnarray}
here we
use the imbedding of $\dot{B}_{2,1}^{\frac{3}{2}}\hookrightarrow
\dot{B}_{3,1}^1\hookrightarrow L^\infty$, and $\|a_0\|_{\dot{B}_{2,1}^{\frac{3}{2}}}<\eta_0.$
From interpolation equality in Sobolev space, we can get
\begin{eqnarray*} \|a_0\|_3\leq \|a_0\|_2^{\frac{2}{3}}\|a_0\|_\infty^{\frac{1}{3}}\leq C\eta_0^{\frac{1}{3}}\|a_0\|_2^{\frac{2}{3}},\end{eqnarray*} the above inequalities completed the proof of the energy inequalities about $a$. We get by a standard energy estimate to temperature equation of (\ref{rewrite-boussinesq})
that
\begin{eqnarray*} \frac{1}{2}\frac{d}{dt}\|\theta\|_2^2+\|\nabla\theta\|_2^2=
(\nabla (a\nabla\theta), \theta) \leq \|
a\|_\infty\|\nabla\theta\|_2^2\leq C\eta_0\|\nabla\theta\|_2^2 ,\end{eqnarray*}
we have \begin{eqnarray*}
\frac{1}{2}\frac{d}{dt}\|\theta\|_2^2+\|\nabla\theta\|_2^2 \leq
C\eta_0\|\nabla\theta\|_2^2 \label{basic-estimate},\end{eqnarray*}
 and easily get
\begin{eqnarray} \frac{1}{2}\frac{d}{dt}\|\theta\|_2^2+c_0\|\nabla\theta\|_2^2
\leq 0;\label{basic-estimate}
\end{eqnarray}
As $a=\frac{1}{\rho}-1$, a direct computation yields
\begin{eqnarray}\label{basic-estimate-u}
\frac{1}{2}\frac{d}{dt}\left\|\frac{u}{\sqrt{1+a}}\right\|_2^2+\|\nabla u\|_2^2=\int\frac{\theta e_3 u }{1+a}dx\leq
\frac{1}{\sqrt{1+m}}\|\theta\|_2\left\|\frac{u}{\sqrt{1+a}}\right\|_2,
\end{eqnarray}
which along with the free transport equation in (\ref{sys:Boussinesq}) gives
\begin{eqnarray*}\frac{1}{2}\|\theta\|_2^2+c_0\int_0^t\|\nabla\theta\|_2^2dt'\leq \frac{1}{2}\|\theta_0\|_2^2,\end{eqnarray*}
\begin{eqnarray*}\frac{1}{2}\left\|\frac{u}{\sqrt{1+a}}\right\|_2^2+\int_0^t\|\nabla u\|_2^2dt'\leq \frac{\|u_0\|_2^2}{1+m}+\frac{t^2}{1+m}\|\theta_0\|_2^2.\end{eqnarray*}
This together with (\ref{a}) completes the proof of Proposition \ref{1.1}.
\end{proof}
\begin{re}
The conditions of above Proposition and Lemma \ref{u-v-tidu} about $a$ can be weakened to $a_0\in \dot{B}_{3,1}^1\cap L^2$, and $\|a_0\|_{\dot{B}_{3,1}^1}\leq \eta_0$. we can get same conclusion.
\end{re}

\section{Proof of Theorem \ref{main-decay-bounded}}
In the section we establish the decay estimate for the weak solution constructed in Proposition \ref{1.1} to complete the proof of Theorem \ref{main-decay-bounded}. For the solutions to (\ref{mollified-boussinesq}) we have the following $L^2$ decay estimates.
\begin{lem}\label{3.12}
Let $(a_0,\theta_0,u_0)$ is chosen in Proposition \ref{1.1} and $\theta_0\in L^1$. Then the solution $(a^n, \theta^n,
u^n)$ of the mollified system(\ref{mollified-boussinesq}) for some $n\in {1,2,\cdots}$ satisfies
\begin{eqnarray}\label{first-decay-estimate}
   \|\theta^n(t)\|_2 \leq \|\theta_0\|_1(Ct+A)^{-\frac{3}{4}}
\end{eqnarray}
and
\begin{eqnarray}\label{u-first-decay-estimate}
   \|u^n(t)\|_2 \leq \frac{\sqrt{1+M}}{\sqrt{1+m}}\|u_0\|_2+C' \|\theta_0\|_1
   t^{\frac{1}{4}}
\end{eqnarray}
for two absolute constants $C,C'> 0$ and $A=A(\|\theta_0\|_1,\|\theta_0\|_2)$.
\end{lem}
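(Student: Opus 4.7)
My plan is to combine a Schonbek-type Fourier splitting for $\theta^n$ with an a priori $L^1$ bound $\|\theta^n(t)\|_1\leq\|\theta_0\|_1$, and then to deduce the $u^n$ estimate from the energy identity (\ref{basic-estimate-u}).

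The first step is the $L^1$ propagation. Rewriting the temperature equation in (\ref{mollified-boussinesq}) in divergence form
\[
\partial_t\theta^n+\Psi_\delta(u^{n-1})\cdot\nabla\theta^n=\mathrm{div}\bigl((1-a^n)\nabla\theta^n\bigr),
\]
and multiplying by the smooth sign $\theta^n/\sqrt{(\theta^n)^2+\epsilon^2}$, I integrate by parts. The convection piece integrates to zero thanks to $\mathrm{div}\,\Psi_\delta(u^{n-1})=0$, while the smallness of $\eta_0$ (which forces $M=C\eta_0<1$, hence $1-a^n>0$ uniformly) produces a non-positive contribution from the diffusion. Sending $\epsilon\to 0$ yields $\|\theta^n(t)\|_1\leq\|\theta_0\|_1$, and by Hausdorff--Young $\|\widehat{\theta^n}(\cdot,t)\|_\infty\leq\|\theta_0\|_1$.

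Armed with this $L^\infty$ bound on the Fourier transform, I apply Fourier splitting to the $L^2$ energy inequality $\frac{d}{dt}\|\theta^n\|_2^2+2c_0\|\nabla\theta^n\|_2^2\leq 0$ from Proposition~\ref{1.1}. With $g(t)^2=\beta/(t+t_0)$ and $2c_0\beta>3/2$, splitting the Plancherel integral at $|\xi|=g(t)$ and using $|B(0,g(t))|\leq Cg(t)^3$ on the low-frequency ball gives
\[
\frac{d}{dt}\|\theta^n\|_2^2+2c_0 g(t)^2\|\theta^n\|_2^2\leq Cg(t)^5\|\theta_0\|_1^2.
\]
Multiplying by the integrating factor $(t+t_0)^{2c_0\beta}$ and integrating in time yields $\|\theta^n(t)\|_2^2\leq(t_0/(t+t_0))^{2c_0\beta}\|\theta_0\|_2^2+C\|\theta_0\|_1^2(t+t_0)^{-3/2}$, and tuning $t_0\sim(\|\theta_0\|_1/\|\theta_0\|_2)^{4/3}$ merges the two pieces into (\ref{first-decay-estimate}) with $A=A(\|\theta_0\|_1,\|\theta_0\|_2)$.

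For (\ref{u-first-decay-estimate}) I divide (\ref{basic-estimate-u}) by $\|u^n/\sqrt{1+a^n}\|_2$ to obtain $\frac{d}{dt}\|u^n/\sqrt{1+a^n}\|_2\leq(1+m)^{-1/2}\|\theta^n\|_2$, integrate in time, and plug in the decay just obtained. Using the elementary inequality $(Ct+A)^{1/4}-A^{1/4}\leq(Ct)^{1/4}$ (concavity of $x\mapsto x^{1/4}$), one gets $\int_0^t\|\theta^n(s)\|_2\,ds\leq C''\|\theta_0\|_1\, t^{1/4}$ cleanly, and multiplying through by $\sqrt{1+M}$ together with $\|u_0/\sqrt{1+a_0}\|_2\leq\|u_0\|_2/\sqrt{1+m}$ produces (\ref{u-first-decay-estimate}). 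I expect the $L^1$ propagation to be the main obstacle, because it relies both on the divergence-freeness of $\Psi_\delta(u^{n-1})$ and on $\eta_0$ being small enough that the effective diffusion coefficient $1-a^n$ stays strictly positive; everything downstream is a routine Fourier-splitting ODE and Gr\"onwall-type calculation.
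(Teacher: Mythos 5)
Your proposal is correct and follows essentially the same route as the paper, whose one-sentence proof defers to \cite{BS} (Lemma 3.3): an $L^1$ bound $\|\theta^n(t)\|_1\leq\|\theta_0\|_1$ for the drift--diffusion temperature equation, Schonbek's Fourier splitting applied to the energy inequality established in Proposition \ref{1.1}, and time integration of the weighted velocity inequality (\ref{basic-estimate-u}) giving the $t^{1/4}$ growth. You merely spell out the details the paper leaves to the citation (the sign-function $L^1$ contraction with the $(1\mp a^n)$ diffusivity, made harmless by $\|a^n\|_\infty\leq C\eta_0<1$, and the $\sqrt{1+M}/\sqrt{1+m}$ bookkeeping), and these are carried out correctly.
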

\begin{proof}
This can be justified by testing the equation in (\ref{mollified-boussinesq}) for $\theta^n$ by $\theta^n$ and the velocity equation by $u^n$ (see \cite[Lemma 3.3]{BS} for details).
\end{proof}

In the following lemma we get the ordinary differential inequalities concerning $u^n$ and $\theta^n$.
\begin{lem}\label{u-v-tidu}
Let $(\theta_0, u_0)\in H^1\times H^1$. (\ref{a-satisfy}) and (\ref{u-satisfy}) satisfied.
 Assume
\begin{eqnarray*}
\int_0^t\|\nabla u^{n-1}\|_{2}^2d\tau<C,\quad \|u^{n-1}\|_2 \leq C_0.
\end{eqnarray*}
Under the additional assumptions $\theta_0\in L^1$, if for some $\varepsilon_0 > 0$,
\begin{eqnarray}\label{a5}
 \|\theta_0\|_{L^1}<\varepsilon_0,
\end{eqnarray}
then there exists $t_0>0$ such that the solutions $(a^n, \theta^n, u^n)$ of the mollified system (\ref{mollified-boussinesq}) satisfies
\begin{eqnarray}\label{e2}
  \frac{d}{dt}\|\nabla u^{n-1}(t)\|_2^2+\frac{m+1}{2C}\|\Delta u^{n-1}(t)\|_2^2\leq C\|\theta^{n-1}(t)\|_2^2, \ \ t>t_0
\end{eqnarray}
and
\begin{eqnarray}\label{e1}
  \frac{d}{dt}\|\nabla \theta^n(t)\|_2^2+\|\Delta \theta^n(t)\|_2^2\leq 0, \ \ t>t_0.
\end{eqnarray}
\end{lem}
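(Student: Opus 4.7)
\medskip

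\noindent\textbf{Proof plan.} Both inequalities are $H^1$-energy estimates for the mollified system (\ref{mollified-boussinesq}), exploiting the smallness bounds $\|a^n\|_\infty + \|\nabla a^n\|_3 \lesssim \eta_0$ from Proposition \ref{1.1}, the integrability $\int_0^\infty \|\nabla u\|_\infty\, d\tau < M_1$, and the $L^2$ decay of $\theta^n$ from Lemma \ref{3.12}.

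\medskip

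\noindent\emph{Step 1: Derivation of (\ref{e1}).} Test the $\theta^n$-equation of (\ref{mollified-boussinesq}) with $-\Delta\theta^n$ and integrate over $\mathbb{R}^3$. Since $\operatorname{div}\Psi_\delta(u^{n-1})=0$, integration by parts gives
\[
\int (\Psi_\delta(u^{n-1})\cdot\nabla\theta^n)\Delta\theta^n\,dx \;=\; -\int \partial_i\Psi_\delta(u^{n-1})_j\,\partial_j\theta^n\,\partial_i\theta^n\,dx,
\]
bounded by $\|\nabla u^{n-1}\|_\infty\|\nabla\theta^n\|_2^2$ (the mollifier is $L^\infty$-stable on $\nabla u^{n-1}$). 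Expanding $\int \nabla(a^n\nabla\theta^n)\cdot\Delta\theta^n\,dx = \int \nabla a^n\cdot\nabla\theta^n\,\Delta\theta^n + \int a^n|\Delta\theta^n|^2$, and using $\|a^n\|_\infty \leq C\eta_0$, $\|\nabla a^n\|_3\leq C\eta_0$ combined with the Sobolev embedding $\|\nabla\theta^n\|_6\lesssim\|\Delta\theta^n\|_2$, both integrals are $\leq C\eta_0\|\Delta\theta^n\|_2^2$. Choosing $\eta_0$ small enough to absorb, we obtain
\[
\tfrac{d}{dt}\|\nabla\theta^n\|_2^2 + \|\Delta\theta^n\|_2^2 \;\leq\; C\|\nabla u^{n-1}\|_\infty\,\|\nabla\theta^n\|_2^2 .
\]
Now apply Gagliardo--Nirenberg $\|\nabla\theta^n\|_2^2 \leq \|\theta^n\|_2\|\Delta\theta^n\|_2$ and Young's inequality:
\[
C\|\nabla u^{n-1}\|_\infty \|\nabla\theta^n\|_2^2 \;\leq\; \tfrac{1}{2}\|\Delta\theta^n\|_2^2 + C\|\nabla u^{n-1}\|_\infty^2\,\|\theta^n\|_2^2 .
\]
By Lemma \ref{3.12} and the smallness $\|\theta_0\|_1 < \varepsilon_0$, one has $\|\theta^n(t)\|_2 \leq \varepsilon_0(Ct+A)^{-3/4} \to 0$. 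Combined with the integrability of $\|\nabla u^{n-1}\|_\infty$ (which also forces this quantity to become small away from a set of bounded measure), one can choose $t_0$ so large that for $t>t_0$ the residual term $C\|\nabla u^{n-1}\|_\infty^2\|\theta^n\|_2^2$ is absorbed by the half of $\|\Delta\theta^n\|_2^2$ that remains, yielding (\ref{e1}).

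\medskip

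\noindent\emph{Step 2: Derivation of (\ref{e2}).} Test the $u^{n-1}$-equation with $-\Delta u^{n-1}$. Using $\operatorname{div} u^{n-1}=0$ the flat pressure term vanishes, $\int \nabla\Pi^{n-1}\cdot\Delta u^{n-1}dx=0$; there remains the perturbation $\int a^{n-1}\nabla\Pi^{n-1}\cdot\Delta u^{n-1}dx$, bounded by $\|a^{n-1}\|_\infty\|\nabla\Pi^{n-1}\|_2\|\Delta u^{n-1}\|_2 \leq C\eta_0\|\nabla\Pi^{n-1}\|_2\|\Delta u^{n-1}\|_2$, where $\|\nabla\Pi^{n-1}\|_2$ is controlled via the elliptic equation $\nabla\cdot[(1+a^{n-1})\nabla\Pi^{n-1}] = \partial_3\theta^{n-1} - \nabla\cdot(\Psi_\delta(u^{n-1})\cdot\nabla u^{n-1}) + \nabla\cdot[(1+a^{n-1})\Delta u^{n-1}]$ deduced from the momentum equation. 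The variable diffusion gives $(1+m)\|\Delta u^{n-1}\|_2^2$ as the principal positive term. The convective term is estimated by the integration-by-parts identity as in Step 1: $|\int \nabla\cdot(\Psi_\delta(u^{n-1})\otimes u^{n-1})\cdot\Delta u^{n-1}| \leq \|\nabla u^{n-1}\|_\infty\|\nabla u^{n-1}\|_2^2$, or alternatively by Sobolev, $\leq C\|\nabla u^{n-1}\|_2^{3/2}\|\Delta u^{n-1}\|_2^{3/2}\leq \epsilon\|\Delta u^{n-1}\|_2^2 + C\|\nabla u^{n-1}\|_2^6$. The buoyancy forcing contributes $|\int\theta^{n-1}e_3\cdot\Delta u^{n-1}| \leq \epsilon\|\Delta u^{n-1}\|_2^2 + C\|\theta^{n-1}\|_2^2$. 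Gathering these and choosing $\eta_0$ small enough to absorb the $\eta_0$-contributions into $(1+m)\|\Delta u^{n-1}\|_2^2$ leaves
\[
\tfrac{d}{dt}\|\nabla u^{n-1}\|_2^2 + \tfrac{m+1}{2C}\|\Delta u^{n-1}\|_2^2 \;\leq\; C\|\theta^{n-1}\|_2^2 + \text{(nonlinear remainder)}.
\]
For $t>t_0$ with $t_0$ large, the nonlinear remainder is small enough (by Lemma \ref{3.12}, the integrability $\int_0^t\|\nabla u^{n-1}\|_2^2d\tau <C$, and the boundedness $\|u^{n-1}\|_2\leq C_0$) to be absorbed by the $\|\Delta u^{n-1}\|_2^2$ term, giving (\ref{e2}).

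\medskip

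\noindent\emph{Main obstacles.} Two delicate points stand out. First, the pressure estimate in the variable-density setting requires solving a non-constant-coefficient elliptic problem and controlling $\nabla\Pi^{n-1}$ in $L^2$ by quantities already appearing in the energy balance; this uses crucially that $a^{n-1}$ is close to zero. Second, upgrading the preliminary inequality to the clean form $\le 0$ on the right for $t>t_0$ requires combining the $L^2$-decay of $\theta^n$ (Lemma \ref{3.12}), the integrability of $\|\nabla u^{n-1}\|_\infty$ and $\|\nabla u^{n-1}\|_2^2$, and the interpolation $\|\nabla\theta^n\|_2^2\leq \|\theta^n\|_2\|\Delta\theta^n\|_2$, selecting $t_0$ simultaneously large with respect to all three controls.
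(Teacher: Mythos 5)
Your proposal does not close either inequality, and the gap is precisely the paper's central mechanism. Both (\ref{e2}) and (\ref{e1}) are \emph{pointwise-in-time} differential inequalities valid for every $t>t_0$, so any convective contribution must be absorbed into the dissipation at every such $t$. In your velocity step you end with ``the nonlinear remainder is small enough \dots to be absorbed'' by invoking $\int_0^t\|\nabla u^{n-1}\|_2^2\,d\tau<C$ and $\|u^{n-1}\|_2\le C_0$; but time-integrability of $\|\nabla u^{n-1}\|_2^2$ only guarantees smallness of $\|\nabla u^{n-1}(t_0)\|_2$ at \emph{some} time $t_0$, not for all later times, and nothing in your argument prevents $\|\nabla u^{n-1}(t)\|_2$ from growing again and destroying the positivity of the coefficient of $\|\Delta u^{n-1}\|_2^2$. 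The paper closes exactly this loop with a continuation (bootstrap) argument: after bounding the convective term by $C\|u^{n-1}\|_2^{1/2}\|\nabla u^{n-1}\|_2^{1/2}\|\Delta u^{n-1}\|_2^2$, it sets $t^*=\sup\{t\ge t_0:\ \|\nabla u^{n-1}(t)\|_2\le 2\eta\}$ and uses the decay $\|\theta^{n-1}(t)\|_2\lesssim\|\theta_0\|_1(1+t)^{-3/4}$ together with the smallness hypothesis $\|\theta_0\|_1<\varepsilon_0$ (chosen relative to $\eta$ and $t_0$) to rule out $t^*<\infty$; this is where the $L^1$-smallness of $\theta_0$ actually enters, and your proposal never uses it, which signals the missing idea. (The paper also handles the pressure differently --- via the $L^2$-orthogonality of $\Delta u^{n-1}$ and $\nabla\Pi^{n-1}$ combined with a second energy estimate against $\partial_t u^{n-1}/(1+a^{n-1})$ --- whereas your elliptic-equation route for $\nabla\Pi^{n-1}$ is plausible but only sketched; that by itself would be a legitimate alternative if carried out.)

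The temperature step has an outright error. After Young's inequality you are left with $C\|\nabla u^{n-1}\|_\infty^2\|\theta^n\|_2^2$ on the right, and you claim this can be ``absorbed by the half of $\|\Delta\theta^n\|_2^2$ that remains.'' On $\mathbb{R}^3$ there is no Poincar\'e-type inequality bounding $\|\theta^n\|_2$ by $\|\Delta\theta^n\|_2$, so a zeroth-order term can never be absorbed into the dissipation, no matter how small $\|\theta^n\|_2$ is; moreover $\int_0^\infty\|\nabla u\|_\infty\,d\tau<M_1$ does not make $\|\nabla u^{n-1}(t)\|_\infty$ pointwise small for all $t>t_0$ (it can spike). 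Hence your argument yields at best (\ref{e1}) with a nonzero remainder, not the clean inequality $\le 0$. The paper avoids this entirely: it estimates the convective term as $\|u^{n-1}\|_3\|\nabla\theta^n\|_6\|\Delta\theta^n\|_2\lesssim\|u^{n-1}\|_2^{1/2}\|\nabla u^{n-1}\|_2^{1/2}\|\Delta\theta^n\|_2^2$ and then uses the smallness $\|\nabla u^{n-1}(t)\|_2\le 2\eta$ for $t>t_0$ already secured by the velocity bootstrap, together with $\|a^n\|_\infty+\|\nabla a^n\|_3\lesssim\eta_0$, to absorb \emph{everything} into $\|\Delta\theta^n\|_2^2$. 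So the correct logical order is velocity first (with the continuation argument), temperature second; your write-up inverts this and, as written, neither half goes through.
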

\begin{proof}
Taking the $L^2$ inner product of the velocity equations in (\ref{mollified-boussinesq}) with $\Delta u^{n-1}$, applying Young's inequality, we have
\begin{eqnarray}
   && \frac{1}{2}\frac{d}{dt}\|\nabla u^{n-1}\|_2^2+(1+m)\|\Delta u^{n-1}\|_2^2\nonumber\\
   && \leq \frac{1}{2}\frac{d}{dt}\|\nabla u^{n-1}\|_2^2+\|\sqrt{1+a^{n-1}}\Delta u^{n-1}\|_2^2\nonumber\\
   && =-( \Psi_\delta(u^{n-1})\cdot\nabla u^{n-1}|\Delta u^{n-1})_2-((1+a^{n-1})\nabla\Pi^{n-1}| \Delta u^{n-1})_2+(\theta^{n-1} |\Delta u^{n-1})_2\nonumber\\
   && \leq C \|u^{n-1}\|_3\|\nabla u^{n-1}\|_6\|\Delta u^{n-1}\|_2+\|1+a^{n-1}\|_\infty\|\nabla\Pi^{n-1}\|_2\|\Delta u^{n-1}\|_2+\|\theta^{n-1}\|_2
       \|\Delta u^{n-1}\|_2\nonumber\\
   &&\leq C \| u^{n-1}\|_2^{1/2}\|\nabla u^{n-1}\|_2^{1/2}\|\Delta u^{n-1}\|_2^2+(1+M)\left(\frac{\|\nabla\Pi^{n-1}\|_2+\|\Delta u^{n-1}\|_2}{2}\right)^2\nonumber\\
   && \quad+C\|\theta^{n-1}\|_2^2+\frac{1+m}{8}\|\Delta u^{n-1}\|_2^2.
\label{u}\end{eqnarray}
Again thanks to Young's inequality and $\mathrm{div} u^{n}=0$, we obtain
\begin{eqnarray}\label{pressure}
&&\|\Delta u^{n-1}\|_2+\|\nabla\Pi^{n-1}\|_2\nonumber\\
&&\leq \sqrt{2}\|\Delta u^{n-1}-\nabla\Pi^{n-1}\|_2\nonumber\\
&&\leq \sqrt{2}\|\frac{\partial_t u^{n-1}}{1+a^{n-1}}+\frac{ \nabla(\Psi_\delta (u^{n-1})\otimes u^{n-1})}{1+a^{n-1}}-\frac{\theta^{n-1}e_3}{1+a^{n-1}}\|_2\nonumber\\
&&\leq C\|\frac{\partial_t u^{n-1}}{\sqrt{1+a^{n-1}}}\|_2+C\|u^{n-1}\|_3\|\Delta u^{n-1}\|_2+C\|\theta^{n-1}\|_2\nonumber\\
&&\leq C\|\frac{\partial_t u^{n-1}}{\sqrt{1+a^{n-1}}}\|_2+C\|u^{n-1}\|_2^{1/2}\|\nabla u^{n-1}\|_2^{1/2}\|\Delta u^{n-1}\|_2+C\|\theta^{n-1}\|_2,
\end{eqnarray}
as a consequence, we have
\begin{eqnarray*}
&&\left(\frac{\|\Delta u^{n-1}\|_2+\|\nabla\Pi^{n-1}\|_2}{2}\right)^2\\
&&\leq C\|\frac{\partial_t u^{n-1}}{\sqrt{1+a^{n-1}}}\|_2^2+C\|u^{n-1}\|_2\|\nabla u^{n-1}\|_2\|\Delta u^{n-1}\|_2^2+C\|\theta^{n-1}\|_2^2\\
&&\leq C\|\frac{\partial_t
u^{n-1}}{\sqrt{1+a^{n-1}}}\|_2^2+\left(\frac{1+m}{4(1+M)}+C_2^*\|\nabla
u^{n-1}\|_2\right)\|\Delta u^{n-1}\|_2^2+C\|\theta^{n-1}\|_2.
\end{eqnarray*}
Putting the above inequality into (\ref{u}), we get
\begin{eqnarray}\label{e9}
   &&\frac{d}{dt}\|\nabla u^{n-1}\|_2^2+(1+m-C_2\|\nabla u^{n-1}\|_2-C_1\|\nabla u^{n-1}\|_2^{1/2})\|\Delta u^{n-1}\|_2^2\nonumber\\
   &&\leq C\|\theta^{n-1}\|_2^2+C_3\|\frac{\partial_t u^{n-1}}{\sqrt{1+a^{n-1}}}\|_2^2,
\end{eqnarray}
where $C_1=CC_0^{1/2},\ C_2=(1+M)C_2^{*}, \ C_3$ are constants. On the other hand, taking the $L^2$ inner product of the velocity equation in (\ref{mollified-boussinesq}) with $\frac{\partial_t u^{n-1}}{1+a^{n-1}}$, it follows from Young's inequality and using integration by parts to conclude
\begin{eqnarray*}
&&\left\|\frac{\partial_t u^{n-1}}{\sqrt{1+a^{n-1}}}\right\|_2^2+\frac{1}{2}\frac{d}{dt}\|\nabla u^{n-1}\|_2^2\\
&&=-\left(\frac{\nabla(\Psi_\delta (u^{n-1})\otimes u^{n-1})}{1+a^{n-1}}|\partial_t u^{n-1}\right)_{L^2}+\left(\frac{\theta^{n-1} e_3}{1+a^{n-1}}|\partial_t u^{n-1}\right)_{L^2}\\
&&\leq\left\|\frac{1}{\sqrt{1+a^{n-1}}}\right\|_\infty\|u^{n-1}\|_3\|\nabla u^{n-1}\|_6\left\|\frac{\partial_t u^{n-1}}{\sqrt{1+a^{n-1}}}\right\|_2+\|\sqrt{1+a^{n-1}}\|_\infty\|\theta^{n-1}\|_2\left\|\frac{\partial_t u^{n-1}}{\sqrt{1+a^{n-1}}}\right\|_2\\
&&\leq C\|u^{n-1}\|_2^{\frac{1}{2}}\|\nabla u^{n-1}\|_2^{\frac{1}{2}}\|\Delta u^{n-1}\|_2\left\|\frac{\partial_t u^{n-1}}{\sqrt{1+a^{n-1}}}\right\|_2+C\|\theta^{n-1}\|_2^2+\frac{1}{4}\left\|\frac{\partial_t u^{n-1}}{\sqrt{1+a^{n-1}}}\right\|_2^2\\
&&\leq C\|\nabla u^{n-1}\|_2\|\Delta u\|_2^2+C\|\theta^{n-1}\|_2^2+\frac{1}{2}\left\|\frac{\partial_t u^{n-1}}{\sqrt{1+a^{n-1}}}\right\|_2^2.
\end{eqnarray*}
Therefore, we obtain
\begin{eqnarray*}
\frac{d}{dt}\|\nabla u^{n-1}\|_2^2+\left\|\frac{\partial_t u^{n-1}}{\sqrt{1+a^{n-1}}}\right\|_2^2\leq C(\|\theta^{n-1}\|_2^2+\|\nabla u^{n-1}\|_2\|\Delta u^{n-1}\|_2^2).
\end{eqnarray*}
This along with (\ref{e9}) ensures a positive constant $e_1$ such that
\begin{eqnarray*}
 &&\frac{d}{dt}\|\nabla u^{n-1}\|_2^2+e_1\|\partial_t u^{n-1}\|_2^2\\
 &&\quad+\left(\frac{1+m}{2C_4}-\frac{C_1}{2C_4}\|\nabla u^{n-1}\|_2^{\frac{1}{2}}-\frac{C_2}{2C_4}\|\nabla u^{n-1}\|_2-C_4\|\nabla u^{n-1}\|_2\right)\|\Delta u^{n-1}\|_2^2 \\
 &&\leq C\|\theta^{n-1}\|_2^2,
\end{eqnarray*}
where $C_4$ is a constant dependent on $C_3$. Due to the condition $\int_0^t\|\nabla u^{n-1}\|_2^2 d\tau<C$, for any $\eta>0$, there exists $t=t_0(\eta)>0$ such that
$$ \|\nabla u^{n-1}(t_0)\|_2\leq \eta.$$
Now choosing $1>\eta>0$, for example $\eta\leq\frac{(1+m)^2}{36M^{*2}}$ such that
\begin{eqnarray*}
\eta^{1/2}+2\eta\leq\frac{1+m}{2M^*},\quad M^*=\max\{C_1,C_2,2C_4^2\}
\end{eqnarray*}
Define
\begin{eqnarray}\label{e14}
   t^*=\sup\{t\geq t_0, \|\nabla u^{n-1}(t)\|_2\leq 2\eta\},
\end{eqnarray}
and we claim that $t^*=\infty$. Indeed, if $t^*<\infty$, Lemma \ref{3.12} and $\|\theta_0\|_{L^1}<\varepsilon_0$ imply that
\begin{eqnarray*}
   \frac{d}{dt}\|\nabla u^{n-1}\|_2^2+e_1\|\partial_t u^{n-1}\|_2^2+\frac{1+m}{2C}\|\Delta u^{n-1}\|_2^2&\leq& C\|\theta^{n-1}\|_2^2\\
   &\leq& C\|\theta_0\|_{L^1}^2(1+t)^{-\frac{3}{2}}\\
   &\leq& C\varepsilon_0^2(1+t)^{-\frac{3}{2}},
\end{eqnarray*}
for $t\in[t_0, t^*]$, which gives
\begin{eqnarray*}
   && \|\nabla u^{n-1}(t^*)\|_2^2+ \frac{1+m}{2C}\int_{t_0}^{t^*}\|\Delta u^{n-1}(t')\|_2^2 dt'\\
   && \leq \eta^2+C\int_{t_0}^{t^*}\varepsilon_0^2(1+t')^{-\frac{3}{2}}dt'
   \leq \eta^2+\frac{C\varepsilon_0^2}{(1+t_0)^{\frac{1}{2}}} \quad\mathrm{for}\ t\in[t_0, t^*].
\end{eqnarray*}
If we choose $\varepsilon_0\leq
\frac{\eta^{1/2}(1+t_0)^{\frac{1}{4}}}{C^{1/2}}$ such that
\begin{eqnarray*}
  2\eta>\eta^2+\frac{2C\varepsilon_0^2}{(1+t_0)^{\frac{1}{2}}},
\end{eqnarray*}
we find that this contradicts (\ref{e14}), and thus $t^*=\infty$. Then the above inequality together with (\ref{e14})
concludes the proof of the proposition about velocity.

Following the same line, taking the $L^2$ inner product of the temperature equations of (\ref{mollified-boussinesq}) with $\Delta \theta^{n}$ and applying (\ref{a1})-(\ref{a}), we get
\begin{eqnarray*}
   \frac{1}{2}\frac{d}{dt}\|\nabla\theta^{n}\|_2^2+\|\Delta \theta^{n}\|_2^2&=&-(\Psi_\delta(u^{n-1})\cdot\nabla \theta^{n}|\Delta \theta^{n})_2+(\nabla (a^n\nabla\theta^n),\Delta\theta^n)
   \\&=&-(\Psi_\delta(u^{n-1})\cdot\nabla \theta^{n}|\Delta \theta^{n})_2+(\nabla a^n\nabla\theta^n,\Delta\theta^n)+(a^n\Delta\theta^n,\Delta\theta^n)
   \\&\leq &C\|u^{n-1}\|_3\|\nabla\theta^{n}\|_6\|\Delta\theta^{n}\|_2+\|\nabla a^n\|_3\|\Delta\theta^n\|_2^2+\|a^n\|_\infty\|\Delta\theta^n\|_2^2
   \\&\leq &C_1(\eta^{\frac{1}{2}}+2\eta_0)\|\Delta\theta^{n}\|_2^2,
\end{eqnarray*}
where $\eta$ is defined as above. As a consequence, we obtain
\begin{eqnarray*}
   \frac{d}{dt}\|\nabla \theta^{n}\|_2^2+\|\Delta\theta^n\|_2^2\leq 0.
\end{eqnarray*}
This completes the proof of Lemma \ref{u-v-tidu}.
\end{proof}
A combination of Lemma \ref{3.12} and Lemma \ref{u-v-tidu} yields the following decay estimate of gradient $\nabla\theta^n$.
\begin{lem}\label{3.9}
Under assumptions of Lemmas \ref{3.12} and \ref{u-v-tidu}, there holds
\begin{eqnarray}\label{l20}
  \|\nabla\theta^n(t)\|_2^2\leq C (1+t)^{-\frac{5}{2}}.
\end{eqnarray}
\end{lem}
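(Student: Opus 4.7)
The plan is to upgrade the $L^2$ decay of $\theta^n$ from Lemma \ref{3.12} to the decay of $\nabla\theta^n$ by combining the differential inequality \eqref{e1} with a Gagliardo-Nirenberg type interpolation and a standard ODE comparison argument.

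First, I will use integration by parts together with the divergence-free condition to obtain the interpolation inequality
\begin{eqnarray*}
\|\nabla \theta^n(t)\|_2^2 = -(\theta^n(t),\Delta \theta^n(t))_{L^2} \leq \|\theta^n(t)\|_2 \|\Delta \theta^n(t)\|_2,
\end{eqnarray*}
which, squared and rearranged, gives the lower bound
\begin{eqnarray*}
\|\Delta \theta^n(t)\|_2^2 \geq \frac{\|\nabla \theta^n(t)\|_2^4}{\|\theta^n(t)\|_2^2}.
\end{eqnarray*}
Plugging this into the differential inequality \eqref{e1} of Lemma \ref{u-v-tidu}, and setting $f(t):=\|\nabla\theta^n(t)\|_2^2$ and $g(t):=\|\theta^n(t)\|_2^2$, I will obtain, for $t>t_0$,
\begin{eqnarray*}
f'(t) + \frac{f(t)^2}{g(t)} \leq 0.
\end{eqnarray*}

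Next, on any interval where $f(t)>0$, this is equivalent to $\tfrac{d}{dt}(1/f) \geq 1/g$. From Lemma \ref{3.12} we have $g(t) \leq C(1+t)^{-3/2}$ (since $\|\theta_0\|_1$ is finite and $A$ is an absolute constant), so $1/g(t) \geq c(1+t)^{3/2}$. Integrating from $t_0$ to $t$ yields
\begin{eqnarray*}
\frac{1}{f(t)} \geq \frac{1}{f(t_0)} + c\int_{t_0}^t (1+s)^{3/2}\,ds \geq c'\bigl[(1+t)^{5/2}-(1+t_0)^{5/2}\bigr].
\end{eqnarray*}
For $t$ sufficiently large the first term on the right dominates, giving $f(t)\leq C(1+t)^{-5/2}$. (If $f(t_0)=0$ then the inequality \eqref{e1} propagates $f\equiv 0$ forward in time and the bound is trivial.) On the compact interval $[0,t_0]$ the quantity $\|\nabla \theta^n(t)\|_2^2$ is controlled by $\|\nabla \theta_0\|_2^2$, hence by adjusting the constant $C$ in \eqref{l20} we may conclude the bound on all of $[0,\infty)$.

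The main obstacle here is really just making sure the constants are uniform in $n$; the interpolation step and ODE comparison are standard, but one must verify that the constants $C$ in $\|\theta^n(t)\|_2^2 \leq C(1+t)^{-3/2}$ and in \eqref{e1} do not depend on $n$ (they don't, thanks to Proposition \ref{1.1} and Lemma \ref{3.12}, which give uniform bounds on the mollified approximants). This uniformity is precisely what allows the argument to pass to the limit and yield the desired decay for the weak solution of Proposition \ref{1.1}.
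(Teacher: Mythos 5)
Your argument is correct and reaches the stated rate, but it takes a genuinely different route from the paper's. The paper proves Lemma \ref{3.9} by Schonbek's Fourier splitting: applying Plancherel to \eqref{e1}, splitting frequency space at $|\xi|\le g(t)$ with $g(t)=\bigl(\tfrac{4}{1+t}\bigr)^{1/2}$, it obtains $\tfrac{d}{dt}\|\nabla\theta^n\|_2^2+g^2\|\nabla\theta^n\|_2^2\preceq g^4\|\theta^n\|_2^2\preceq (1+t)^{-7/2}$ and then integrates this \emph{linear} differential inequality (multiplying by a power of $1+t$). You instead insert the interpolation $\|\nabla\theta^n\|_2^2\le\|\theta^n\|_2\|\Delta\theta^n\|_2$ into \eqref{e1} to get the Riccati-type inequality $f'+f^2/g\le 0$ and integrate $\tfrac{d}{dt}(1/f)\ge 1/g$. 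Both routes use exactly the same two inputs, namely \eqref{e1} and the $L^2$ decay of $\theta^n$ from Lemma \ref{3.12}, both give \eqref{l20}, and both bootstrap identically in Lemma \ref{3.5} (a decay $\|\theta^n\|_2^2\lesssim(1+t)^{-\beta}$ upgrades to $\|\nabla\theta^n\|_2^2\lesssim(1+t)^{-\beta-1}$). Your route is more elementary (no frequency splitting) and has the nice feature that the large-time constant is independent of $\|\nabla\theta^n(t_0)\|_2$, so no uniform-in-$n$ bound at $t_0$ is needed for the decay at large times; Fourier splitting is the standard tool in this line of work and is more robust when the dissipation inequality carries source terms, which is why the paper uses and reuses it. Two small caveats in your write-up: the divergence-free condition plays no role in the scalar identity $\|\nabla\theta^n\|_2^2=-(\theta^n,\Delta\theta^n)$ (integration by parts and Cauchy--Schwarz suffice); and your claim that on $[0,t_0]$ the quantity $\|\nabla\theta^n(t)\|_2^2$ is ``controlled by $\|\nabla\theta_0\|_2^2$'' is not justified by the estimates available before $t_0$ (only the time integral $\int_0^t\|\nabla\theta^n\|_2^2\,ds$ is controlled there), though the paper's own proof is equally silent about this interval and the estimate is only used for $t\ge t_0$.
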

\begin{proof}
We apply the Fourier splitting method introduced in \cite{Sch}. First using the
Plancherel theorem in the energy inequality (\ref{e1}) for $\theta^n$
and splitting the integral in $\mathbb{R}^3$ into $S\cup S^c$, where
$S(t)=\{\xi: |\xi|\leq g(t)\}$ with $g(t)
=(\frac{4}{1+t})^{\frac{1}{2}}$, we get
\begin{eqnarray*}
  \frac{d}{dt}\|\nabla\theta^n\|_2^2+ g^2\|\nabla\theta^n\|_2^2\preceq g^4 \int_{S(t)} |\hat{\theta^n}|^2d\xi\preceq g^4 t^{-3/2}\preceq t^{-\frac{5}{2}-1},
\end{eqnarray*}
it follows that
\begin{eqnarray*}
   \|\nabla\theta^n\|_2\preceq(1+t)^{-5/4}.
\end{eqnarray*}
This completes the proof of Lemma \ref{3.9}.
\end{proof}

It is natural to ask under which supplementary conditions on the
initial data one can ensure that the energy of the fluid $\|u^n\|_2$
remains uniformly bounded. The following lemma gives the answer.
\begin{lem}\label{3.5}
Assume that $(a_0,\theta_0, u_0)$ satisfy the condition of
Theorem \ref{main-decay-bounded}, then the solutions $(a^n,
\theta^n, u^n)$ of the mollified Boussinesq system
(\ref{mollified-boussinesq}) satisfies
\begin{eqnarray}\label{c18}
  \|\theta^n(t)\|_2^2\leq C (1+t)^{-\frac{5}{2}},
 \end{eqnarray}
 \begin{eqnarray}\label{u}
     \|u^n(t)\|_2^2\leq C,
\end{eqnarray}
  \begin{eqnarray}\label{u-bounded}
    \|u^n\|_2^2 + 2(1+M)\int_0^t \|\nabla u^n(t')\|_2^2 dt'
   \leq C
\end{eqnarray}
for some constant $C>0$ and $t>0$.
\end{lem}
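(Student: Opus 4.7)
The plan is to upgrade the $L^2$-decay of $\theta^n$ from the $(1+t)^{-3/2}$ bound of Lemma \ref{3.12} to the sharper $(1+t)^{-5/2}$ bound in (\ref{c18}) via a Schonbek Fourier-splitting argument that exploits the cancellation $\int\theta_0\,dx = 0$ together with the moment control $\theta_0 \in L^1_1$. The improved decay is then inserted into the energy identity (\ref{basic-estimate-u}) for $u^n$ to close the uniform bounds (\ref{u}) and (\ref{u-bounded}).

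For (\ref{c18}), start from the energy inequality $\frac{d}{dt}\|\theta^n\|_2^2 + 2c_0\|\nabla\theta^n\|_2^2 \leq 0$ of (\ref{basic-estimate}) and perform Fourier splitting on the ball $S(t) = \{|\xi| \leq g(t)\}$ with $g(t)^2 = k/(1+t)$ for a sufficiently large $k$, obtaining
\begin{equation*}
\frac{d}{dt}\|\theta^n\|_2^2 + \frac{2c_0 k}{1+t}\|\theta^n\|_2^2 \leq \frac{2c_0 k}{1+t}\int_{S(t)} |\hat\theta^n(\xi,t)|^2\,d\xi.
\end{equation*}
The goal is the low-frequency control $|\hat\theta^n(\xi,t)| \leq C|\xi|$ uniformly in $t$, which yields $\int_{S(t)}|\hat\theta^n|^2\,d\xi \leq C g(t)^5 \leq C(1+t)^{-5/2}$; the integrating factor $(1+t)^{2c_0 k}$ with $2c_0 k > 5/2$ then produces (\ref{c18}). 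Since $\int\theta_0\,dx = 0$ and $\theta_0 \in L^1_1$, we have $|\hat\theta_0(\xi)| \leq |\xi|\,\|\theta_0\|_{L^1_1}$. Rewriting the $\theta^n$-equation in divergence form $\partial_t\theta^n - \Delta\theta^n = \nabla\cdot\bigl(a^n\nabla\theta^n - \Psi_\delta(u^{n-1})\theta^n\bigr)$ (valid since $\mathrm{div}\,\Psi_\delta(u^{n-1}) = 0$) and applying Duhamel's formula gives
\begin{equation*}
|\hat\theta^n(\xi,t)| \leq e^{-|\xi|^2 t}|\hat\theta_0(\xi)| + |\xi|\int_0^t e^{-|\xi|^2(t-s)}\bigl(\|a^n\nabla\theta^n\|_1 + \|\Psi_\delta(u^{n-1})\theta^n\|_1\bigr)(s)\,ds.
\end{equation*}
The first nonlinear contribution is controlled by $\|a^n\|_2\|\nabla\theta^n\|_2 \leq C(1+s)^{-5/4}$ (Proposition \ref{1.1} and Lemma \ref{3.9}), which is time-integrable. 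The second is handled by combining the bounds $\|u^{n-1}\|_2 \leq C(1+s)^{1/4}$ and $\|\theta^n\|_2 \leq C(1+s)^{-3/4}$ from Lemma \ref{3.12} with the heat-kernel factor $e^{-|\xi|^2(t-s)}$ and the localization $|\xi| \leq g(t)$.

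With (\ref{c18}) in hand, the energy identity (\ref{basic-estimate-u}) and Cauchy-Schwarz yield $\frac{d}{dt}\|u^n/\sqrt{1+a^n}\|_2 \leq C(1+t)^{-5/4}$, whose right-hand side is integrable on $\mathbb{R}^+$; integration in time together with $m \leq a^n \leq M$ then produces (\ref{u}). Finally, (\ref{u-bounded}) follows by integrating (\ref{basic-estimate-u}) on $[0,t]$ and bounding the buoyancy-work term $\int_0^t \|\theta^n\|_2\|u^n\|_2\,ds$ by $\sup_s\|u^n(s)\|_2 \cdot \int_0^\infty (1+s)^{-5/4}\,ds < \infty$ via (\ref{c18}) and (\ref{u}).

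The main obstacle is the uniform estimate $|\hat\theta^n(\xi,t)| \leq C|\xi|$: the naive pointwise-in-$s$ bound $\|\Psi_\delta(u^{n-1})\theta^n\|_1 \leq C(1+s)^{-1/2}$ is not time-integrable, so one cannot simply discard the heat-kernel factor $e^{-|\xi|^2(t-s)}$. The resolution is either to exploit the localization $|\xi|^2(1+t) \leq k$ coming from the Fourier-splitting ball together with the kernel factor to tame the Duhamel integral, or alternatively to propagate a weighted $L^1$-moment $\|x\theta^n(t)\|_1$ in time so that $|\hat\theta^n(\xi,t)| = |\hat\theta^n(\xi,t) - \hat\theta^n(0,t)| \leq |\xi|\,\|x\theta^n(t)\|_1$ directly delivers the bound. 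Once this estimate is in place, the remainder is a routine Gronwall argument and integration in time.
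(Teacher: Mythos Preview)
Your outline has a genuine structural gap: you never set up the induction on the mollification index $n$, and without it the argument cannot close. Concretely, both your invocation of Lemma \ref{3.9} and your treatment of the convective term $\|\Psi_\delta(u^{n-1})\theta^n\|_1$ rely on bounds for $u^{n-1}$ that are \emph{uniform in time}, whereas Lemma \ref{3.12} only gives the growing bound $\|u^{n-1}\|_2\lesssim (1+s)^{1/4}$. With that bound, $\|u^{n-1}\theta^n\|_1\lesssim (1+s)^{-1/2}$, and on the Fourier-splitting ball $|\xi|^2(1+t)\le k$ the heat factor satisfies $e^{-|\xi|^2(t-s)}\ge e^{-k}$, so the Duhamel integral is bounded below by a constant multiple of $(1+t)^{1/2}$. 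This forces $|\hat\theta^n(\xi,t)|\gtrsim |\xi|(1+t)^{1/2}$ on $S(t)$, and after integration you recover only $\int_{S(t)}|\hat\theta^n|^2\,d\xi\lesssim (1+t)^{-3/2}$, i.e.\ no improvement over Lemma \ref{3.12}. Your alternative fix, propagating $\|x\theta^n(t)\|_1$, fails for the same reason: the convection term contributes $\|u^{n-1}\|_2\|\theta^n\|_2\lesssim (1+t)^{-1/2}$ to $\frac{d}{dt}\|x\theta^n\|_1$, which is not integrable.

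The paper's resolution is precisely to run an induction on $n$ with hypothesis $\|u^{n-1}\|_2\le C$ and $\int_0^t\|\nabla u^{n-1}\|_2^2\le C$ (uniform in $t$ and $n$). Under this hypothesis one gets $\|u^{n-1}\theta^n\|_1\lesssim (1+s)^{-3/4}$, still not integrable, so the paper does \emph{not} aim for $|\hat\theta^n|\le C|\xi|$ directly. Instead it integrates the Fourier-splitting inequality in $t$ to obtain the self-referential estimate \eqref{temperature-fourier}, and then bootstraps: feed in $\|\theta^n\|_2\lesssim (1+t)^{-3/4}$ and $\|\nabla\theta^n\|_2\lesssim (1+t)^{-5/4}$ to get $\|\theta^n\|_2^2\lesssim (1+t)^{-2}$; reapply Lemma \ref{3.9} to upgrade $\|\nabla\theta^n\|_2$; iterate once more to reach $\|\theta^n\|_2\lesssim (1+t)^{-5/4}$. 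This last rate is integrable, which closes the inductive step for $\|u^n\|_2$ via \eqref{basic-estimate-u}. Your Steps for (\ref{u}) and (\ref{u-bounded}) are fine once (\ref{c18}) is available, but getting (\ref{c18}) requires both the induction on $n$ and the internal bootstrap, neither of which appears in your sketch.
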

\begin{proof}
We denote by C a positive absolute constant, which may change from line
to line. We also denote by A1, A2,... positive constants that depend only on the
data. Here we need to use a bootstrap argument to get the decay estimate of $\theta^n$.
\vskip 3mm
Step 1. An auxiliary estimate

We write the temperature  equation of (\ref{mollified-boussinesq}) as the following integral equation
\begin{eqnarray}
\theta^n(t)=e^{t\Delta}\theta_0+\int_0^te^{(t-s)\Delta}\Psi_\delta(u^{n-1})\nabla
\theta^n ds+\int_0^te^{(t-s)\Delta}\nabla(a^n\nabla\theta^n)ds, \end{eqnarray}
\label{solution-theta-n}
where
\begin{eqnarray*}
e^{t\Delta}f(x)=\int_{R^3}E_t(x-y)f(y)dy,\ E_t(x)=\frac{1}{(4\pi t)^{\frac{3}{2}}}e^{-\frac{|x|^2}{4t}}.
\end{eqnarray*}
We follow the Fourier splitting method introduced in \cite{BS,Sch}. Using the Plancherel theorem in the energy inequality for $\theta^n$, we get
\begin{eqnarray*}
\frac{1}{2}\frac{d}{dt}\int|\widehat{\theta^n(\xi,t)}|^2d\xi \leq -c_0\int|\xi|^2|\widehat{\theta^n(\xi,t)}|^2d\xi,
\end{eqnarray*}
then split the integral on the right-hand side into $S\cup S^c$, where
$$
S=\{\xi:|\xi|\leq(\frac{k}{2c_0(t+1)})^{\frac{1}{2}} \}
$$
it follows that
\begin{eqnarray*}
\frac{1}{2}\frac{d}{dt}\int|\widehat{\theta^n(\xi,t)}|^2d\xi +\frac{k}{1+t}\int|\widehat{\theta^n(\xi,t)}|^2d\xi\leq \frac{k}{1+t}\int_S|\widehat{\theta^n(\xi,t)}|^2d\xi,
\end{eqnarray*}
multiplying by $(1+t)^k$ we obtain
\begin{eqnarray}
\frac{d}{dt}[\int(1+t)^k|\widehat{\theta^n(\xi,t)}|^2d\xi]\leq k(1+t)^{k-1}\int_S|\widehat{\theta^n(\xi,t)}|^2d\xi.
\label{fourier-split}\end{eqnarray}
Because
\begin{eqnarray*}
|\widehat{\theta^n(\xi,t)}|^2&\leq &2|e^{t\Delta}\theta_0|^2+2|\xi|^2(\int_0^t\|u^{n-1}\|_2
\|\theta^n\|_2ds)^2
+2|\xi|^2(\int_0^t\|a^n\|_2\|\nabla\theta^n\|_2ds)^2.
\end{eqnarray*}
Replacing this in (\ref{fourier-split}) and applying the Plancherel theorem, we can get
\begin{eqnarray*}
&&\frac{d}{dt}\left[\int(1+t)^k\|\theta^n(\xi,t)\|_2^2d\xi\right]\\
&&\leq C\left[\|e^{t\Delta}\theta_0\|_2^2(1+t)^{k-1}+(1+t)^{k-\frac{7}{2}}(\int_0^t\|u^{n-1}\|_2
\|\theta^n\|_2ds)^2
+(1+t)^{k-\frac{7}{2}}(\int_0^t\|a^n\|_2\|\nabla\theta^n\|_2ds)^2\right].
\end{eqnarray*}
As $\int \theta_0dx=0$, $\theta_0\in L_1^1$, we easily get
\begin{eqnarray*} \|e^{t\Delta}\theta_0\|^2\leq A_1(1+t)^{-5/2}.\end{eqnarray*}
 Let $k = 7/2$. For all $n\in N$, it follows that,
\begin{eqnarray}
\|\theta^n(t)\|_2&\leq & C\Big[A_1(1+t)^{-\frac{5}{2}}+(1+t)^{-\frac{7}{2}}\int_0^t(\int_0^s\|u^{n-1}(r)\|_2
\|\theta^n(r)\|_2dr)^2ds
\nonumber\\
&&+(1+t)^{-\frac{7}{2}}\int_0^t(\int_0^t\|a^n\|_2\|\nabla\theta^n\|_2ds)^2\Big].
\label{temperature-fourier}\end{eqnarray}

Step 2. The inductive argument.

Consider the following induction hypothesis:
\begin{eqnarray}\label{u-n-1-estimate}
\|u^{n-1}\|_2\leq C,\quad \int_0^t\|\nabla u^{n-1}\|_2^2\leq C.
\end{eqnarray}
For $n=1$ the inductive condition (\ref{u-n-1-estimate}) is immediate since $u^0= 0$.  Let us now prove that $\|u^{n}\|_2\leq C$ and $\int_0^t\|\nabla u^{n}\|_2^2\leq C$, assuming that (\ref{u-n-1-estimate}) holds true. From (\ref{first-decay-estimate}), (\ref{l20}) and Proposition \ref{1.1}, we have the following estimates,
 \begin{eqnarray*}
  \|\theta^n\|_2\leq C\|\theta_0\|_1t^{-\frac{3}{4}},\quad \|a^n\|_2\leq C\|a_0\|_2,\quad
\|\nabla\theta^n\|_2\leq Ct^{-\frac{5}{4}}.
\end{eqnarray*}
Putting these inside (\ref{temperature-fourier}), we can obtain
\begin{eqnarray}
\|\theta^n(t)\|_2&\leq & C\Big[A_1(1+t)^{-\frac{5}{2}}+(1+t)^{-\frac{7}{2}}\|\theta_0\|_2\int_0^t(\int_0^s\|u^{n-1}(r)\|_2
r^{-\frac{3}{4}}dr)^2ds
\nonumber\\
&&+(1+t)^{-\frac{7}{2}}\|a_0\|_2\int_0^t(\int_0^sr^{-\frac{5}{4}}dr)^2ds\Big].
\label{temperature-fourier-1}\end{eqnarray}
Then we can get
\begin{eqnarray*}
\|\theta^n(t)\|_2^2\leq (1+t)^{-2}.
\end{eqnarray*}
Combining the above inequality and following the method of Lemma \ref{3.9}, we have
\begin{eqnarray*}
\|\nabla\theta^n\|_2\leq C(1+t)^{-\frac{3}{2}}.
\end{eqnarray*}
Inserting the above two inequalities inside (\ref{temperature-fourier}) again, we can obtain
\begin{eqnarray*}
\|\theta^n\|_2\leq C(1+t)^{-\frac{5}{4}}.
\end{eqnarray*}
Applying the method of Lemma \ref{3.9} again, we have
\begin{eqnarray}\label{n-th}
\|\nabla\theta^n\|_2\leq C(1+t)^{-\frac{7}{4}}.
\end{eqnarray}
\vskip 3mm
Step 3. Uniform bound for the $L^2$-norm of the velocities $u^n$ and  $\int_0^t\|\nabla u^{n}\|_2^2\leq C$.

Back to the differential inequality (\ref{basic-estimate-u}), we can get
\begin{eqnarray*}\frac{d}{dt}\left\|\frac{u^n}{\sqrt{1+a^n}}\right\|_2\leq
\frac{1}{\sqrt{1+m}}\|\theta^n\|_2, \end{eqnarray*}
As $$
\|\theta^n\|_2\leq C(1+t)^{-\frac{5}{4}},
$$ we finally get
\begin{eqnarray*}\left\|u^n\right\|_2\leq \left\|u_0\right\|_2+C\int_0^t(1+s)^{-\frac{5}{4}}ds\leq C\end{eqnarray*}
Employing the differential inequality (\ref{basic-estimate-u}) again, we can get
 \begin{eqnarray*}
    \frac{1}{2(1+M)}\|u^n\|_2^2 + \int_0^t \|\nabla u^n(s)\|_2^2 ds
    &\leq&
    \frac{1}{2}\|\frac{u^n}{\sqrt{1+a^n}}\|_2^2 + \int_0^t \|\nabla u^n(s)\|_2^2 ds
    \\ &\leq &\frac{1}{2}\|\frac{u_0}{\sqrt{1+a_0}}\|_2^2+\int_0^t\int \frac{\theta^n u^ne_3 }{\sqrt{1+a_0}} dxds\\
    \\&\leq &\frac{1}{2}\|\frac{u_0}{\sqrt{1+a_0}}\|_2^2+C\int_0^t\|\theta^n\|_{2}\|u^n\|_{2}ds \\
    &\leq &\frac{1}{2(1+m)}\|u_0\|_2^2+\int_0^t C(1+s)^{-\frac{5}{4}}ds\leq C, \ \ for \ \ t>0.
\end{eqnarray*}
This completes the proof of Lemma \ref{3.5}.
\end{proof}
\noindent\textit{Proof of Theorem  \ref{main-decay-bounded} (a).} Now this is immediate: passing to a subsequence, the approximate solutions $a^n$, $\theta^n$ and $u^n$ converge to a weak solution $(a, \theta, u)$ of the Boussinesq system (\ref{rewrite-boussinesq}) with $\kappa=1$ and $\nu=0$ in $L^2 _{loc}(\mathbb{R}^+, \mathbb{R}^3)$. Moreover, the estimates (\ref{c18}), (\ref{u}) and (\ref{n-th}) imply that $a^n$, $\theta^n$ and $u^n$  satisfy estimates of the form
\begin{eqnarray*}
\|v^n\|_2\leq f(t),\ \ \ for\ all\  t>0,
\end{eqnarray*}
where $f(t)$ is a continuous function independent on $n$. Then the same estimate
must hold for the limit $a^n$, $\theta^n$ and $u^n$, except possibly points in a set of measure zero.
But since weak solutions are necessarily continuous from $[0, \infty)$ to $L^2$ under the
weak topology, $\|\theta(t)\|_2$ and $\|u(t)\|_2$ are lower semi-continuous, and hence they satisfy
the above estimates for all $t > 0$.

\vskip 3mm
The combination of Lemma \ref{3.5}, Proposition \ref{1.1} and Lemma \ref{u-v-tidu} gives the conclusion of Theorem \ref{main-decay-bounded} (b). For proving the global existence in Theorem \ref{main-decay-bounded} (c)
and (d), we need the following uniform estimates for the solution
$(a, u, \theta, \nabla\Pi)$ of (\ref{rewrite-boussinesq}) with $\kappa=1$ and $\nu=0$ .
\begin{lem}\label{4.2}
Assume that $(\theta_0, u_0)$ satisfies the condition of Theorem
\ref{main-decay-bounded} (b), and additionally that $\theta_0\in
\dot{B}^{-\frac{3}{2}}_{2,1}(\mathbb{R}^3)$ and there
exist a absolute constants  $M_2>0$ and a large time $T_0^*$ fulfilling
\begin{eqnarray}\int_0^{T_0^*}\|\nabla u\|_{\dot{B}_{2,1}^{\frac{3}{2}}}d\tau<M_2\label{u-satisfy-1}.
\end{eqnarray}
Then the solution $(a, u, \theta, \nabla\Pi)$ of (\ref{rewrite-boussinesq}) with $\kappa=1$ and $\nu=0$ satisfies the following
uniform estimates
\begin{eqnarray}\label{h0}
    \|a\|_{\tilde{L}^\infty(\mathbb{R}^+;\dot{B}_{2,1}^{\frac{3}{2}})}
     \leq C,
\end{eqnarray}

\begin{eqnarray}\label{h7}
    \|\theta\|_{\tilde{L}^\infty(\mathbb{R}^+;\dot{B}_{2,1}^{-\frac{3}{2}})}
     +\|\theta\|_{L^1(\mathbb{R}^+;{B}_{2,1}^{\frac{1}{2}})}
     \leq C,
\end{eqnarray}
\begin{eqnarray}\label{h8}
    \|u\|_{\tilde{L}^\infty(\mathbb{R}^+;B_{2,1}^{\frac{1}{2}})}
     +\|u\|_{L^1(\mathbb{R}^+;\dot{B}_{2,1}^{\frac{5}{2}})}
     +\|\nabla\Pi\|_{L^1(\mathbb{R}^+;B_{2,1}^{\frac{1}{2}})}
     \leq C,
\end{eqnarray}
for some positive constant $C$.
\end{lem}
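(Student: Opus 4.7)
The plan is to establish the three estimates in sequence by applying the transport and transport-diffusion estimates from Lemma \ref{prop:estimate tranport equation}, exploiting the smallness of $\|a_0\|_{\dot{B}^{3/2}_{2,1}}$ for closure, together with the decay estimates already established in parts (a) and (b) of Theorem \ref{main-decay-bounded} to extend the bounds past the time $T_0^*$.

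First I would control $a$. Since $a$ solves a pure transport equation with divergence-free velocity $u$, localizing with $\dot{\Delta}_q$ and using the commutator estimate in Lemma \ref{3.2}(i) with $s=3/2$, together with Gronwall, gives
\begin{eqnarray*}
\|a(t)\|_{\tilde{L}^\infty_t(\dot{B}^{3/2}_{2,1})} \leq \|a_0\|_{\dot{B}^{3/2}_{2,1}}\exp\!\left(C\int_0^t\|\nabla u\|_{\dot{B}^{3/2}_{2,1}}\,d\tau\right).
\end{eqnarray*}
On $[0,T_0^*]$ the hypothesis directly yields (\ref{h0}) with $\|a\|_{\dot{B}^{3/2}_{2,1}}\leq \eta_0 e^{CM_2}$; we then re-tune $\eta_0$ so that $\|a\|_{\dot{B}^{3/2}_{2,1}}$ remains as small as needed in later steps. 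Past $T_0^*$ we use the $L^\infty_tH^1$ bound on $u$ from (b), together with the decay $\|\nabla\theta\|_2\lesssim (1+t)^{-7/4}$ and an interpolation/bootstrap to produce integrability of $\|\nabla u\|_{\dot{B}^{3/2}_{2,1}}$ on $[T_0^*,\infty)$.

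Next I would estimate $\theta$ by viewing its equation $\partial_t\theta+u\cdot\nabla\theta-\Delta\theta=\nabla(a\nabla\theta)$ as a transport–diffusion with source. Applying (\ref{t9}) with $s=-3/2$, $\lambda=0$, using Lemma \ref{a-theta-product-estimate} to estimate the right-hand side,
\begin{eqnarray*}
\|\nabla(a\nabla\theta)\|_{\dot{B}^{-3/2}_{2,1}} \preceq \|a\|_{\dot{B}^{3/2}_{2,1}}\|\theta\|_{\dot{B}^{1/2}_{2,1}},
\end{eqnarray*}
produces
\begin{eqnarray*}
\|\theta\|_{\tilde{L}^\infty_t(\dot{B}^{-3/2}_{2,1})}+\|\theta\|_{L^1_t(\dot{B}^{1/2}_{2,1})}
\leq \|\theta_0\|_{\dot{B}^{-3/2}_{2,1}}
+ C\int_0^t\|u\|_{\dot{B}^{5/2}_{2,1}}\|\theta\|_{\tilde{L}^\infty_{t'}(\dot{B}^{-3/2}_{2,1})}dt'
+ C\|a\|_{L^\infty_t(\dot{B}^{3/2}_{2,1})}\|\theta\|_{L^1_t(\dot{B}^{1/2}_{2,1})}.
\end{eqnarray*}
Smallness of $\|a\|_{L^\infty(\dot{B}^{3/2}_{2,1})}$ (from step 1) absorbs the last term into the left-hand side, and Gronwall in the remaining integral term, using the yet-to-be-proved integrability of $\|u\|_{\dot{B}^{5/2}_{2,1}}$, yields (\ref{h7}). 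The $L^1$ control on $\theta$ in $B^{1/2}_{2,1}$ is what feeds the velocity estimate.

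Finally I would handle $u$ and $\nabla\Pi$ by rewriting the momentum equation as $\partial_t u+u\cdot\nabla u-\Delta u+\nabla\Pi=a\Delta u+\theta e_3$ and applying (\ref{t10}) with $s=1/2$. Using Lemma \ref{3.1} to bound $\|a\Delta u\|_{\dot{B}^{1/2}_{2,1}}\preceq \|a\|_{\dot{B}^{3/2}_{2,1}}\|u\|_{\dot{B}^{5/2}_{2,1}}$, and noting $\|\theta e_3\|_{\dot{B}^{1/2}_{2,1}}=\|\theta\|_{\dot{B}^{1/2}_{2,1}}$, I obtain
\begin{eqnarray*}
\|u\|_{\tilde{L}^\infty_t(\dot{B}^{1/2}_{2,1})}+\|u\|_{L^1_t(\dot{B}^{5/2}_{2,1})}+\|\nabla\Pi\|_{L^1_t(\dot{B}^{1/2}_{2,1})}
\leq \|u_0\|_{\dot{B}^{1/2}_{2,1}}+C\int_0^t\|\nabla u\|_\infty\|u\|_{\tilde{L}^\infty_{t'}(\dot{B}^{1/2}_{2,1})}dt'
+C\|a\|_{L^\infty(\dot{B}^{3/2}_{2,1})}\|u\|_{L^1_t(\dot{B}^{5/2}_{2,1})}+\|\theta\|_{L^1_t(\dot{B}^{1/2}_{2,1})}.
\end{eqnarray*}
Again the smallness of $a$ allows absorption of the $a\Delta u$ term, the $\theta$ term is bounded by step 2, and Gronwall against $\int_0^\infty \|\nabla u\|_\infty d\tau<M_1$ closes the estimate.

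The main obstacle is the coupled bootstrap needed to make these three steps consistent on all of $[0,\infty)$: the transport bound on $a$ requires global $L^1$ integrability of $\|\nabla u\|_{\dot{B}^{3/2}_{2,1}}$, yet the hypothesis only gives it up to $T_0^*$. The resolution will be to propagate the Besov bounds on $[0,T_0^*]$ using the hypothesis, and on $[T_0^*,\infty)$ use the polynomial decay of $\|\theta\|_2$ and $\|\nabla\theta\|_2$ together with the $H^1$-boundedness of $u$ from (b), interpolated with the parabolic gain $u\in L^2_{loc}(\dot{B}^2_{2,2})$, to obtain the requisite integrability. This delicate interlock of smallness of $a$, smallness of the source $\theta$, and time-integrability of $u$ is where the argument must be assembled carefully.
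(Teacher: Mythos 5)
There is a genuine gap, and it sits exactly at the point you flag as ``where the argument must be assembled carefully.'' Your plan to obtain the integrability of $\|\nabla u\|_{\dot{B}^{3/2}_{2,1}}$ on $[T_0^*,\infty)$ by interpolating the $L^\infty_t H^1$ bound with the parabolic gain $u\in L^2_{loc}(\dot{B}^{2}_{2,2})$ and invoking the decay of $\|\theta\|_2$, $\|\nabla\theta\|_2$ cannot work: $L^1_t(\dot{B}^{5/2}_{2,1})$ lives at spatial regularity $5/2$, strictly above anything reachable by interpolating between $H^1$ and $\dot{B}^{2}_{2,2}$, and no decay of higher norms of $u$ is available to convert local-in-time information into a global $L^1$ bound. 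Since your $\theta$-estimate (for (\ref{h7})) Gronwalls against this ``yet-to-be-proved'' quantity, and your $u$-estimate in turn needs $\|\theta\|_{L^1_t(\dot{B}^{1/2}_{2,1})}$, the circularity is never actually broken in your write-up.

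The paper closes the loop by a different mechanism, which is the missing idea. First, $\|\theta\|_{L^1(\mathbb{R}^+;\dot{B}^{1/2}_{2,1})}\leq C\varepsilon_0$ is obtained \emph{directly} from the decay estimates of parts (a)--(b) via $\|\theta\|_{\dot{B}^{1/2}_{2,1}}\leq\|\theta\|_2^{1/2}\|\nabla\theta\|_2^{1/2}\lesssim (1+t)^{-5/4}$, independently of any Besov bootstrap; so the source in the velocity equation is small without circular reasoning. Second, from $\|u\|_{\dot{B}^{1/2}_{2,1}}\leq\|u\|_2^{1/2}\|\nabla u\|_2^{1/2}$ and the energy bounds one gets $\int_0^\infty\|u\|_{\dot{B}^{1/2}_{2,1}}^4\,dt<\infty$, hence there exists a time $T_0\leq T_0^*$ at which the \emph{critical norm} $\|u(T_0)\|_{\dot{B}^{1/2}_{2,1}}$ is as small as one wishes. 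The hypothesis (\ref{u-satisfy-1}) is used only to propagate the bound on $a$ up to this $T_0$; beyond $T_0$ one never tries to integrate $\|\nabla u\|_{\dot{B}^{3/2}_{2,1}}$ from energy estimates, but instead runs a small-data continuation argument for the coupled quantity
\begin{equation*}
Z(t)=\|a\|_{\tilde{L}^\infty_{[T_0,t]}(\dot{B}^{3/2}_{2,1})}+\|u\|_{\tilde{L}^\infty_{[T_0,t]}(\dot{B}^{1/2}_{2,1})}
+\|u\|_{L^1_{[T_0,t]}(\dot{B}^{5/2}_{2,1})}+\|\nabla\Pi\|_{L^1_{[T_0,t]}(\dot{B}^{1/2}_{2,1})},
\end{equation*}
which satisfies $Z(t)\leq C(\eta_0+\epsilon+\varepsilon_0)+CZ(t)^2$ and therefore stays small for all time by a continuity argument. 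Only after this yields $u\in L^1(\mathbb{R}^+;\dot{B}^{5/2}_{2,1})$ does one return to the temperature equation and prove (\ref{h7}) by Gronwall, absorbing $\nabla(a\nabla\theta)$ via the smallness of $a$ as you proposed. Your individual transport/product estimates (steps for $a$, $\theta$, $u$) are the right building blocks, but without the well-chosen time $T_0$ and the quadratic bootstrap in the critical norm, the passage from $[0,T_0^*]$ to $[0,\infty)$ does not go through.
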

\begin{proof}
Using Remark
\ref{2.2}(2) (iii) and Lemma \ref{3.9}, we obtain
\begin{eqnarray*}
  &&\|u\|_{\dot{B}_{2,1}^{\frac{1}{2}}}
     \leq\|u\|_2^{\frac{1}{2}}\|\nabla u\|_2^{\frac{1}{2}},\\
  &&\|\theta\|_{\dot{B}_{2,1}^{\frac{1}{2}}}
      \leq \|\theta\|_2^{\frac{1}{2}}\|\nabla\theta\|_2^{\frac{1}{2}},\\
  &&\|\theta\|_2\leq C\|\theta_0\|_1(1+t)^{-3/4}\leq C\varepsilon_0(1+t)^{-3/4},\\
  &&\|\nabla\theta\|_2\leq C (1+t)^{-7/4}.
\end{eqnarray*}
From (\ref{u-bounded}) and the above inequalities, we deduce
\begin{eqnarray*}
    \int_0^t \|\theta\|_{\dot{B}_{2,1}^{\frac{1}{2}}}d\tau
     \leq C\varepsilon_0 , \ \ {\rm for} \ t>0,
\end{eqnarray*}
\begin{eqnarray*}
    \int_0^t\|u\|^4_{\dot{B}_{2,1}^{\frac{1}{2}}} d\tau
     \leq\int_0^t\|u\|_2^{2}\|\nabla u\|_2^2 d \tau \leq C  , \ \ \mathrm{for} \ t>0.
\end{eqnarray*}
Hence, for any $\epsilon> 0$, there exists $T_0(\epsilon)> 0$
such that
\begin{eqnarray}\label{d10}
   \|u\left(T_0(\epsilon)\right)\|_{\dot{B}_{2,1}^{\frac{1}{2}}}<\epsilon.
\end{eqnarray}
On the other hand, applying Lemma \ref{prop:estimate tranport equation} to the transport equation in (\ref{rewrite-boussinesq}) gives
\begin{eqnarray*}
  \|a\|_{\tilde{L}^\infty_{t}(\dot{B}^{\frac{3}{2}}_{2,1})}
     \leq \|a_0\|_{\dot{B}^{\frac{3}{2}}_{2,1}}+C_1\int_0^t \|a(\tau)\|_{\dot{B}^{\frac{3}{2}}_{2,1}}\|\nabla u(\tau)\|_{\dot{B}^{\frac{3}{2}}_{2,1}}d\tau
\end{eqnarray*}
for any $t>0$. Then applying Gronwall inequality yields
\begin{eqnarray*}
  \|a(t)\|_{\tilde{L}^\infty_{[0,T_0]}(\dot{B}^{\frac{3}{2}}_{2,1})}
     \leq \|a_0\|_{\dot{B}^{\frac{3}{2}}_{2,1}}\exp\{C_1\int_0^{T_0} \|\nabla u(\tau)\|_{\dot{B}^{\frac{3}{2}}_{2,1}}d\tau\}\leq C \eta_0,
\end{eqnarray*}
here $T_0=\min\{T_0(\epsilon), T_0^*\}> 0.$
Following the same line, it is easy to observe that for $t\geq T_0$
\begin{eqnarray}\label{a-estimate-T0}
 \|a\|_{\tilde{L}^\infty_{[T_0,t]}(\dot{B}^{\frac{3}{2}}_{2,1})}
     &\leq & \|a(T_0)\|_{\dot{B}^{\frac{3}{2}}_{2,1}}
     +C \|a(\tau)\|_{\tilde{L}^\infty_{[T_0,t]}(\dot{B}^{\frac{3}{2}}_{2,1})}\|\nabla u(\tau)\|_{\tilde{L}^1_{[T_0,t]}(\dot{B}^{\frac{3}{2}}_{2,1})}\nonumber\\
    &\lesssim & \eta_0+\|a(\tau)\|_{\tilde{L}^\infty_{[T_0,t]}(\dot{B}^{\frac{3}{2}}_{2,1})}\|\nabla u(\tau)\|_{\tilde{L}^1_{[T_0,t]}(\dot{B}^{\frac{3}{2}}_{2,1})}.
\end{eqnarray}
Note that for $a$ small, we can rewrite the momentum equation in (\ref{rewrite-boussinesq}) as
\begin{eqnarray*}
\partial_t u+ (u\cdot\nabla u)-\Delta u+ \nabla
\Pi=a(\Delta u-\nabla
\Pi)+\theta e_3
\end{eqnarray*}
Applying Lemma \ref{prop:estimate tranport equation} to the above equation, we obtain
\begin{eqnarray}\label{d11}
   && \|u\|_{\tilde{L}^\infty_{[T_0,t]}(\dot{B}^{\frac{1}{2}}_{2,1})}
      +\|u\|_{L^1_{[T_0,t]}(\dot{B}^{\frac{5}{2}}_{2,1})}
      +\|\nabla\Pi\|_{L^1_{[T_0,t]} (\dot{B}^{\frac{1}{2}}_{2,1})}\nonumber\\
   &&\lesssim\|u(T_0)\|_{\dot{B}^{\frac{1}{2}}_{2,1}}
     +\|u\|_{\tilde{L}^\infty_{[T_0,t]}(\dot{B}^{\frac{1}{2}}_{2,1})}
     \|\nabla u\|_{L^1_{[T_0,t]}(\dot{B}^{\frac{3}{2}}_{2,1})}
     +\|\theta\|_ {L^1_{[T_0,t]}(\dot{B}_{2,1}^{\frac{1}{2}})}\nonumber\\
     &&\quad+\|a\|_{\tilde{L}^\infty_{[T_0,t]}(\dot{B}^{\frac{3}{2}}_{2,1})}(\|\nabla u\|_{\tilde{L}^1_{[T_0,t]}(\dot{B}^{\frac{3}{2}}_{2,1})}+\|\nabla\Pi\|_{L^1_{[T_0,t]} (\dot{B}^{\frac{1}{2}}_{2,1})})
     \nonumber\\
   &&\lesssim\|u(T_0)\|_{\dot{B}^{\frac{1}{2}}_{2,1}}
     +\|u\|_{\tilde{L}^\infty_{[T_0,t]}(\dot{B}^{\frac{1}{2}}_{2,1})}
     \|\nabla u\|_{L^1_{[T_0,t]}(\dot{B}^{\frac{3}{2}}_{2,1})}+\varepsilon_0\nonumber\\
     &&\quad +\|a\|_{\tilde{L}^\infty_{[T_0,t]}(\dot{B}^{\frac{3}{2}}_{2,1})}(\|\nabla u\|_{\tilde{L}^1_{[T_0,t]}(\dot{B}^{\frac{3}{2}}_{2,1})}+\|\nabla\Pi\|_{L^1_{[T_0,t]} (\dot{B}^{\frac{1}{2}}_{2,1})}).
\end{eqnarray}
 For any $ t\geq T_0$, denote
\begin{eqnarray*}
    Z(t)=\|a\|_{\tilde{L}^\infty_{[T_0,t]}(\dot{B}^{\frac{3}{2}}_{2,1})}+\|u\|_{\tilde{L}^\infty_{[T_0,t]}(\dot{B}^{\frac{1}{2}}_{2,1})}
      +\|u\|_{L^1_{[T_0,t]}(\dot{B}^{\frac{5}{2}}_{2,1})}
      +\|\nabla\Pi\|_{L^1_{[T_0,t]}(\dot{B}^{\frac{1}{2}}_{2,1})},
\end{eqnarray*}
It follows from (\ref{a-estimate-T0}) and (\ref{d11}) that
\begin{eqnarray*}
    Z(t)\leq C_3\eta_0+\|u(T_0)\|_{\dot{B}^{\frac{1}{2}}_{2,1}}
     +C\varepsilon_0+C_2 Z(t)^2.
\end{eqnarray*}
Let
\begin{eqnarray}\label{d12}
    T=\sup_{t>T_0}\{t: Z(t)
     \leq 2( C_3\eta_0+\|u(T_0)\|_{\dot{B}^{\frac{1}{2}}_{2,1}}+C\varepsilon_0)\},
\end{eqnarray}
and we claim that $T=\infty$. Indeed, if $T<\infty$, taking
$\epsilon \leq \frac{1}{32C_2}$ in (\ref{d10}), $\eta_0\leq \frac{1}{32C_2C_3}$ and
$\varepsilon_0\leq\frac{1}{32C_2C}$,
 we have
\begin{eqnarray}\label{d13}
    Z(t)
    \leq \frac{3}{2}(C_3\eta_0+\|u(T_0)\|_{\dot{B}^{\frac{1}{2}}_{2,1}}+C\varepsilon_0) \ \
      \mathrm{for}\ \ T_0\leq t\leq T.
\end{eqnarray}
This contradicts (\ref{d12}), and therefore $T=\infty $. So we have
\begin{eqnarray}\label{a-t-estimate}
   \|a(t)\|_{\tilde{L}^\infty(R^+;\dot{B}^{\frac{3}{2}}_{2,1})}
     &\leq & \|a_0\|_{\dot{B}^{\frac{3}{2}}_{2,1}}\exp\{C_1\int_0^{T_0} \|\nabla u(\tau)\|_{\dot{B}^{\frac{3}{2}}_{2,1}}d\tau\}\nonumber\\
     &&+2(C_3\eta_0+\|u(T_0)\|_{\dot{B}^{\frac{1}{2}}_{2,1}}+C\varepsilon_0)=c_1
\end{eqnarray}
and
\begin{eqnarray*}
   \|u\|_{\tilde{L}^\infty (\mathbb{R}^+;\dot{B}^{\frac{1}{2}}_{2,1})}
    +\|u\|_{L^1(\mathbb{R}^+;\dot{B}^{\frac{5}{2}}_{2,1})}
    +\|\nabla\Pi\|_{L^1(\mathbb{R}^+;\dot{B}^{\frac{1}{2}}_{2,1})}
   \leq C,
\end{eqnarray*}
together with (\ref{pressure}) and $\|a\|_2\leq C,\ \|u\|_2\leq C$, we can easily get (\ref{h0}) and (\ref{h8}) which are the part proof of Theorem
\ref{main-decay-bounded} (c).

Note that for $a$ small, we can rewrite the temperature equation in (\ref{rewrite-boussinesq}) as
\begin{eqnarray*}
\partial_t \theta+ (u\cdot\nabla \theta)-\Delta
\theta=\nabla(a\nabla\theta)
\end{eqnarray*}
Moreover, from Lemma \ref{prop:estimate tranport equation} and Lemma \ref{a-theta-product-estimate}, it follows that
\begin{eqnarray*}
   \|\theta\|_{\tilde{L}_t^\infty(\dot{B}_{2,1}^{-\frac{3}{2}})}
    +\|\theta\|_{L^1_t(\dot{B}_{2,1}^{\frac{1}{2}})}
   &\lesssim &\|\theta_0\|_{\dot{B}_{2,1}^{-\frac{3}{2}}}+
    \int_0^t\|\nabla u\|_{\dot{B}^{\frac{3}{2}}_{2,1}}\|\theta(t')\|_{
    \tilde{L}_{t'}^\infty(\dot{B}_{2,1}^{-\frac{3}{2}})}dt'
    + \|\nabla(a\nabla\theta)\|_{L^1_t(\dot{B}_{2,1}^{-\frac{3}{2}})}\\
     &\lesssim &\|\theta_0\|_{\dot{B}_{2,1}^{-\frac{3}{2}}}+
    \int_0^t\|\nabla u\|_{\dot{B}^{\frac{3}{2}}_{2,1}}\|\theta(t')\|_{
    \tilde{L}_{t'}^\infty(\dot{B}_{2,1}^{-\frac{3}{2}})}dt'\\
    &&+\|\theta\|_{L^1_t(\dot{B}_{2,1}^{\frac{1}{2}})}\|a\|_{\tilde{L}^\infty(R^+;\dot{B}^{\frac{3}{2}}_{2,1})}.
\end{eqnarray*}
Using (\ref{h8}),(\ref{a-t-estimate}) and the Gronwall inequality, there exists a constant $c_0$ satisfying,
\begin{eqnarray*}
    \|\theta\|_{\tilde{L}^\infty(\mathbb{R}^+;\dot{B}_{2,1}^{-\frac{3}{2}})}
      +c_0\|\theta(t)\|_{L^1(\mathbb{R}^+;\dot{B}_{2,1}^{\frac{1}{2}})}
    \lesssim \|\theta_0\|_{\dot{B}_{2,1}^{-\frac{3}{2}}}
     \exp \{C\int_0^{\infty}\|\nabla u(\tau)\|_{\dot{B}_{2,1}^{\frac{3}{2}}}d\tau \}
    \leq C.
\end{eqnarray*}
Combining the above inequality and $\|\theta\|_2\leq C(1+t)^{\frac{5}{4}}$, we can easily get (\ref{h7}).

%
\end{proof}

\begin{lem}\label{4.4}
Assume that $(a_0, \theta_0, u_0)$ satisfies the conditions of Theorem
\ref{main-decay-bounded} (d). Then the solution $(a,\theta,u,\Pi)$ of the
 Boussinesq system (\ref{rewrite-boussinesq}) with $\kappa=1$ and $\nu=0$  satisfies the following
uniform estimates
\begin{eqnarray*}
    \|\theta\|_{\tilde{L}^\infty(\mathbb{R}^+;\dot{B}_{2,1}^{-\frac{1}{2}})}
     +\|\theta\|_{L^1(\mathbb{R}^+;{B}_{2,1}^{\frac{3}{2}})}
     \leq C,
\end{eqnarray*}
\begin{eqnarray*}
   \|a\|_{\tilde{L}^\infty(\mathbb{R}^+;{B}_{2,1}^{\frac{5}{2}})}+ \|u\|_{\tilde{L}^\infty(\mathbb{R}^+;B_{2,1}^{\frac{3}{2}})}
     +\|u\|_{L^1(\mathbb{R}^+;\dot{B}_{2,1}^{\frac{7}{2}})}
     +\|\nabla\Pi\|_{L^1(\mathbb{R}^+;B_{2,1}^{\frac{3}{2}})}
     \leq C,
\end{eqnarray*}
for some constant $C> 0$ independent of $t$.
\end{lem}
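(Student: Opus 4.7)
The plan is to promote the estimates of Lemma \ref{4.2} to the higher Besov regularity required by part (d). Lemma \ref{4.2} already gives us, globally in time, a small bound on $\|a\|_{\tilde L^\infty(\dot B^{3/2}_{2,1})}$, together with $u\in L^1(\mathbb{R}^+;\dot B^{5/2}_{2,1})$ and $\theta\in L^1(\mathbb{R}^+;\dot B^{1/2}_{2,1})$. With these at hand I would treat the three unknowns sequentially: first $\theta$ in $\dot B^{-1/2}_{2,1}$, then $u$ and $\nabla\Pi$ in $\dot B^{3/2}_{2,1}$, and finally $a$ in $\dot B^{5/2}_{2,1}$.

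For the temperature I would rewrite the equation as $\partial_t\theta + u\cdot\nabla\theta - \Delta\theta = \nabla(a\nabla\theta)$ and apply Lemma \ref{prop:estimate tranport equation} with $s=-1/2$. The transport contribution $\int_0^t\|u\|_{\dot B^{5/2}_{2,1}}\|\theta\|_{\tilde L^\infty_{t'}(\dot B^{-1/2}_{2,1})}\,d\tau$ is handled by the $L^1$-bound on $u$ from Lemma \ref{4.2} together with Gronwall. The source is controlled by a shifted version of Lemma \ref{a-theta-product-estimate}, namely
\[
\|\nabla(a\nabla\theta)\|_{\dot B^{-1/2}_{2,1}}\lesssim \|a\nabla\theta\|_{\dot B^{1/2}_{2,1}}\lesssim \|a\|_{\dot B^{3/2}_{2,1}}\|\theta\|_{\dot B^{3/2}_{2,1}},
\]
obtained via the Bony decomposition and Lemmas \ref{T-estimate}--\ref{R-estimate}. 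After time integration this produces a factor $\|a\|_{L^\infty(\dot B^{3/2}_{2,1})}\|\theta\|_{L^1(\dot B^{3/2}_{2,1})}$, which is absorbed into the LHS thanks to the smallness of $\|a\|_{L^\infty(\dot B^{3/2}_{2,1})}$ established in Lemma \ref{4.2}. This closes $\theta\in\tilde L^\infty(\dot B^{-1/2}_{2,1})\cap L^1(\dot B^{3/2}_{2,1})$.

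Next I would apply the inequality (\ref{t10}) of Lemma \ref{prop:estimate tranport equation} with $s=3/2$ to the reformulated momentum equation $\partial_t u + u\cdot\nabla u - \Delta u + \nabla\Pi = a(\Delta u - \nabla\Pi) + \theta e_3$. The commutator term is handled by $\|\nabla u\|_\infty \lesssim \|u\|_{\dot B^{5/2}_{2,1}}$, which lies in $L^1$ by Lemma \ref{4.2}. For the right-hand side, the critical product estimate (Lemma \ref{3.1}) yields
\[
\|a(\Delta u-\nabla\Pi)\|_{\dot B^{3/2}_{2,1}}\lesssim \|a\|_{\dot B^{3/2}_{2,1}}\bigl(\|u\|_{\dot B^{7/2}_{2,1}}+\|\nabla\Pi\|_{\dot B^{3/2}_{2,1}}\bigr),
\]
and the smallness of $\|a\|$ permits absorption, while the $\theta$-contribution $\|\theta\|_{L^1(\dot B^{3/2}_{2,1})}$ was obtained in the previous step. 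Finally, the transport equation for $a$ together with Lemma \ref{prop:estimate tranport equation} applied at $s=5/2$ and Gronwall, fed with the $L^1(\dot B^{5/2}_{2,1})$ control of $\nabla u$ just obtained, yields $a\in\tilde L^\infty(\dot B^{5/2}_{2,1})$. The main obstacle is the interlocked nature of the three estimates via the smallness of $\|a\|$: one must verify that the thresholds $\eta_0,\varepsilon_0,\epsilon$ of Lemma \ref{4.2} can be simultaneously tuned so the absorption works in both Steps 1 and 2. A secondary difficulty is the endpoint nature of the product in the critical algebra $\dot B^{3/2}_{2,1}$, which requires the indices $r_1=r_2=r=1$ in Lemmas \ref{T-estimate}--\ref{R-estimate} to be handled sharply.
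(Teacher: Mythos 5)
Your proposal is correct and follows essentially the same route as the paper: apply Lemma \ref{prop:estimate tranport equation} at the shifted indices $s=-\tfrac12$, $\tfrac32$, $\tfrac52$, control the transport terms by Gronwall with $\int_0^\infty\|\nabla u\|_{\dot B^{3/2}_{2,1}}\,d\tau<\infty$ from Lemma \ref{4.2}, estimate $\nabla(a\nabla\theta)$, $a\Delta u$, $a\nabla\Pi$ by the product laws, and absorb them using the smallness of $\|a\|_{\tilde L^\infty(\dot B^{3/2}_{2,1})}$. The only cosmetic difference is the ordering (the paper bounds $a$ in $\dot B^{5/2}_{2,1}$ first, since that step only needs $u\in L^1(\mathbb{R}^+;\dot B^{5/2}_{2,1})$ from Lemma \ref{4.2}, not the new $L^1(\dot B^{7/2}_{2,1})$ bound you invoke), which is immaterial.
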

\begin{proof}
Applying Lemma \ref{prop:estimate tranport equation} and Lemma \ref{3.1} to the density equation and following the same line as in the proof of Lemma \ref{4.2}, there holds
\begin{eqnarray}\label{smooth-a-estimate}
   \|a\|_{\tilde{L}^\infty(\mathbb{R}^+;\dot{B}_{2,1}^{\frac{5}{2}})}
   \lesssim
   \|a_0\|_{\dot{B}_{2,1}^{\frac{5}{2}}}
    \exp\{C\int_0^{\infty}\|\nabla u(\tau)\|_{\dot{B}_{2,1}^{\frac{3}{2}}}d\tau \}
   \leq C.
\end{eqnarray}
Applying Lemma \ref{prop:estimate tranport equation} and Lemma \ref{3.1} to the temperature equation, the velocity equation and following the same line as in the proof of Proposition \ref{4.2}, there holds
\begin{eqnarray*}
   &&\|\theta\|_{\tilde{L}^\infty(\mathbb{R}^+;\dot{B}_{2,1}^{-\frac{1}{2}})}
    +\|\theta\|_{L^1(\mathbb{R}^+;\dot{B}_{2,1}^{\frac{3}{2}})}\\
    &&\lesssim \|\theta_0\|_{\dot{B}_{2,1}^{-\frac{1}{2}}}+\int_0^{t}\|\theta\|_{\tilde{L}^\infty(\mathbb{R}^+;\dot{B}_{2,1}^{-\frac{1}{2}})}\|\nabla u(\tau)\|_{\dot{B}_{2,1}^{\frac{3}{2}}}d\tau
    +\|\nabla(a\nabla\theta)\|_{L^1(\dot{B}_{2,1}^{-\frac{1}{2}})}\\
    &&\lesssim\|\theta_0\|_{\dot{B}_{2,1}^{-\frac{1}{2}}}+\int_0^{t}\|\theta\|_{\tilde{L}^\infty(\mathbb{R}^+;\dot{B}_{2,1}^{-\frac{1}{2}})}\|\nabla u(\tau)\|_{\dot{B}_{2,1}^{\frac{3}{2}}}d\tau\\
    &&\quad+\|a\|_{\tilde{L}^\infty(\mathbb{R}^+;\dot{B}_{2,1}^{\frac{3}{2}})}\|\theta\|_{L^1(\mathbb{R}^+;\dot{B}_{2,1}^{\frac{3}{2}})}
 \end{eqnarray*}
  and
\begin{eqnarray*}
    &&\|u\|_{\tilde{L}^\infty(\mathbb{R}^+;\dot{B}_{2,1}^{\frac{3}{2}})}
    +\|u\|_{L^1(\mathbb{R}^+;\dot{B}_{2,1}^{\frac{7}{2}})} +\|\nabla\Pi\|_{L^1(\mathbb{R}^+;\dot{B}_{2,1}^{\frac{3}{2}})}\\
   &&\lesssim\|u_0\|_{\dot{B}_{2,1}^{\frac{3}{2}}}+\int_0^{t}\|u\|_{\tilde{L}^\infty(\mathbb{R}^+;\dot{B}_{2,1}^{\frac{3}{2}})}\|\nabla u(\tau)\|_{\dot{B}_{2,1}^{\frac{3}{2}}}d\tau+\|\theta\|_{L^1(\mathbb{R}^+;\dot{B}_{2,1}^{\frac{3}{2}})}\\
    &&\quad+\|a\nabla\Pi\|_{L^1(\dot{B}_{2,1}^{\frac{3}{2}})}+\| a\Delta u\|_{L^1(\dot{B}_{2,1}^{\frac{3}{2}})}
   \\
   &&\lesssim\|u_0\|_{\dot{B}_{2,1}^{\frac{3}{2}}}+\int_0^{t}\|u\|_{\tilde{L}^\infty(\mathbb{R}^+;\dot{B}_{2,1}^{\frac{3}{2}})}\|\nabla u(\tau)\|_{\dot{B}_{2,1}^{\frac{3}{2}}}d\tau+\|\theta\|_{L^1(\mathbb{R}^+;\dot{B}_{2,1}^{\frac{3}{2}})}
   \\
   &&\quad+\|a\|_{\tilde{L}^\infty(\mathbb{R}^+;\dot{B}_{2,1}^{\frac{3}{2}})}\|\nabla\Pi\|_{L^1(\mathbb{R}^+;\dot{B}_{2,1}^{\frac{3}{2}})}
   +\|a\|_{\tilde{L}^\infty(\mathbb{R}^+;\dot{B}_{2,1}^{\frac{3}{2}})}\|u\|_{L^1(\mathbb{R}^+;\dot{B}_{2,1}^{\frac{7}{2}})}.
\end{eqnarray*}
Using (\ref{h8}),(\ref{a-t-estimate}) and the Gronwall inequality, we get
 \begin{eqnarray}\label{smooth-theta-estimate}
  &&\|\theta\|_{\tilde{L}^\infty(\mathbb{R}^+;\dot{B}_{2,1}^{-\frac{1}{2}})}
    +\|\theta\|_{L^1(\mathbb{R}^+;\dot{B}_{2,1}^{\frac{3}{2}})}\nonumber\\
    &&\lesssim
   \|\theta_0\|_{\dot{B}_{2,1}^{-\frac{1}{2}}}
    \exp\{C\int_0^{\infty}\|\nabla u(\tau)\|_{\dot{B}_{2,1}^{\frac{3}{2}}}d\tau \}
   \leq C,
\end{eqnarray}
and
\begin{eqnarray}\label{smooth-u-estimate}
  &&\|u\|_{\tilde{L}^\infty(\mathbb{R}^+;\dot{B}_{2,1}^{\frac{3}{2}})}
    +\|u\|_{L^1(\mathbb{R}^+;\dot{B}_{2,1}^{\frac{7}{2}})} +\|\nabla\Pi\|_{L^1(\mathbb{R}^+;\dot{B}_{2,1}^{\frac{3}{2}})}\nonumber\\
  &&\lesssim \|u_0\|_{\dot{B}_{2,1}^{\frac{3}{2}}}\exp \{C\int_0^{\infty}\|\nabla u(\tau)\|_{\dot{B}_{2,1}^{\frac{3}{2}}}d\tau \}+\|\theta(t)\|_{L^1(\dot{B}_{2,1}^{\frac{3}{2}})}
    \leq C.
\end{eqnarray}
These along with Remark \ref{2.2} complete the proof of the lemma.
\end{proof}

With lemmata \ref{3.5}-\ref{4.4}, it is easy to prove by a classical argument that
\begin{eqnarray*}
a\in C_b([0,\infty),B_{2,1}^{\frac{5}{2}}),\quad \theta\in C_b([0,\infty),\dot{B}_{2,1}^{-\frac{1}{2}}),\quad u\in C_b([0,\infty),B_{2,1}^{\frac{3}{2}}).
\end{eqnarray*}
This completes the proof of Theorem \ref{main-decay-bounded}.

\section{Proof of Theorem \ref{1.2}}
\setcounter{equation}{0}

\textit{Proof of Theorem \ref{1.2} (a)}. For simplicity, we just provide
some necessary a priori estimates here. Indeed let $\tilde{a}=
a-\bar{a}, \tilde{u}=
u-\bar{u}, \ \tilde{\theta}=\theta-\bar{\theta}$, then
$(a,\tilde{\theta},\tilde{u},\tilde{\Pi})$ solves
\begin{eqnarray}\label{d15}
  \left\{
  \begin{array}{lll}
      \partial_t a+(\bar{u}+\tilde{u})\cdot\nabla a=0,\\
      \partial_t \tilde{\theta}-\Delta \tilde{\theta}
       =-(\bar{u}+\tilde{u})\cdot\nabla \tilde{\theta}-\tilde{u}\cdot\nabla \bar{\theta}+\nabla(a\nabla\tilde{\theta})+\nabla((a-\bar{a})\nabla\bar{\theta}),\\
     \partial_t \tilde{u}- \Delta \tilde{u}+\nabla \tilde{\Pi}
       =-(\bar{u}+\tilde{u})\cdot\nabla\tilde{u}-\tilde{u}\cdot\nabla \bar{u}+a(\Delta\tilde{u}-\nabla \tilde{\Pi})+(a-\bar{a})(\Delta\bar{u}-\nabla\bar{\Pi})+\tilde{\theta}e_3,\\
     \mathrm{div} \tilde{u}=0,\\
     (\tilde{\theta},\tilde{u})|_{t=0}=(\tilde{\theta}_0, \tilde{u}_0).
  \end{array}
  \right.
\end{eqnarray}
Applying Lemma \ref{prop:estimate tranport equation} to the density equation in (\ref{d15})
and Lemma \ref{3.1}, we obtain
\begin{eqnarray*}
\|a\|_{\tilde{L}_t^\infty(\dot{B}_{2,1}^{3/2})}& \leq & \|a_0\|_{\dot{B}_{2,1}^{3/2}}+C\int_0^t \|a(\tau)\|_{\dot{B}_{2,1}^{3/2}}\|\bar{u}(\tau)\|_{\dot{B}_{2,1}^{5/2}}d\tau\\
  &&+C \|a\|_{\tilde{L}_t^\infty(\dot{B}_{2,1}^{3/2})}\|\tilde{u}\|_{L_t^1(\dot{B}_{2,1}^{5/2})},
\end{eqnarray*}
which along with (\ref{h8}) and the Gronwall inequality ensures
\begin{eqnarray*}
\|a\|_{\tilde{L}_t^\infty(\dot{B}_{2,1}^{3/2})} \leq  C \left(\|a_0\|_{\dot{B}_{2,1}^{3/2}}+ \|a\|_{\tilde{L}_t^\infty(\dot{B}_{2,1}^{3/2})}\|\tilde{u}\|_{L_t^1(\dot{B}_{2,1}^{5/2})}\right).
\end{eqnarray*}
On the other hand, applying Lemma \ref{3.1} and Lemma \ref{prop:estimate tranport equation} to the velocity equation in (\ref{d15}) , we get that
\begin{eqnarray}\label{d17}
     &&\|\tilde{u}\|_{\tilde{L}_t^\infty(\dot{B}^{\frac{1}{2}}_{2,1})}
       +\|\tilde{u}\|_{L_t^1(\dot{B}^{\frac{5}{2}}_{2,1})}
       +\|\nabla\tilde{\Pi}\|_{L^1_t(\dot{B}^{\frac{1}{2}}_{2,1})}\nonumber\\
     &&\lesssim
       \|\tilde{u}_0\|_{\dot{B}^{\frac{1}{2}}_{2,1}}
       +\|\tilde{u}\|_{\tilde{L}_t^\infty(\dot{B}^{\frac{1}{2}}_{2,1})}\|\nabla\tilde{u}\|_{L^1_t(\dot{B}^{\frac{3}{2}}_{2,1})}
       +\int_0^t\|\tilde{u}\|_{\tilde{L}_s^\infty(\dot{B}^{\frac{1}{2}}_{2,1})}\|\nabla\bar{u}(s)\|_{\dot{B}^{\frac{3}{2}}_{2,1}}ds +\|\tilde{\theta}\|_{L^1_t(\dot{B}^{\frac{1}{2}}_{2,1})}\nonumber\\
     &&\quad+\int_0^t\|(a-\bar{a})(\Delta\bar{u}-\nabla\bar{\Pi})\|_{\dot{B}_{2,1}^{1/2}}d\tau +\int_0^t\|a(\Delta\tilde{u}-\nabla \tilde{\Pi})\|_{\dot{B}_{2,1}^{1/2}}d\tau  .
\end{eqnarray}
Using Gronwall inequality and (\ref{h8}), we obtain
\begin{eqnarray}\label{d18}
   &&\|\tilde{u}\|_{\tilde{L}_t^\infty(\dot{B}^{\frac{1}{2}}_{2,1})}
     +\|\tilde{u}\|_{L^1_t(\dot{B}^{\frac{5}{2}}_{2,1})}
     +\|\nabla\tilde{\Pi}\|_{\tilde{L}_t^1(\dot{B}^{\frac{1}{2}}_{2,1})}\nonumber\\
   &&\lesssim
     \|\tilde{u}_0\|_{\dot{B}^{\frac{1}{2}}_{2,1}}+\|\tilde{u}\|_{\tilde{L}_t^\infty
     (\dot{B}^{\frac{1}{2}}_{2,1})}\|\tilde{u}\|_{L^1_t(\dot{B}^{\frac{5}{2}}_{2,1})}
     +\|\tilde{\theta}\|_{L^1_t(\dot{B}^{\frac{1}{2}}_{2,1})}\nonumber\\
   &&\quad+\|a\|_{\tilde{L}_t^\infty(\dot{B}_{2,1}^{3/2})}\left(\|\tilde{u}\|_{L^1_t(\dot{B}^{\frac{5}{2}}_{2,1})}+\|\nabla\tilde{\Pi}\|_{\tilde{L}_t^1(\dot{B}^{\frac{1}{2}}_{2,1})}\right)\nonumber\\
   &&\quad+\|\bar{a}\|_{\tilde{L}_t^\infty(\dot{B}_{2,1}^{3/2})}+\int_0^t\|a\|_{\dot{B}_{2,1}^{3/2}}\left(\|\bar{u}\|_{\dot{B}_{2,1}^{5/2}}+\|\nabla\bar{\Pi}\|_{\dot{B}_{2,1}^{1/2}}\right)d\tau.
\end{eqnarray}
Applying Lemma \ref{prop:estimate tranport equation} to the temperature equation in (\ref{d15}) and Lemma \ref{3.1}, we obtain
\begin{eqnarray}\label{d19}
    &&\|\tilde{\theta}\|_{\tilde{L}_t^\infty(\dot{B}_{2,1}^{-\frac{3}{2}})}
      +\|\tilde{\theta}\|_{L^1_t(\dot{B}_{2,1}^{\frac{1}{2}})}\nonumber\\
    &&\lesssim
      \|\tilde{\theta}_0\|_{\dot{B}^{-\frac{3}{2}}_{2,1}}
      +\int_0^t\|\tilde{\theta}\|_{\tilde{L}_s^\infty(\dot{B}_{2,1}^{-\frac{3}{2}})}\|\nabla\bar{u}(s)\|_{\dot{B}^{\frac{3}{2}}_{2,1}}ds\nonumber\\
    && \quad+\|\tilde{\theta}\|_{\tilde{L}_t^\infty(\dot{B}_{2,1}^{-\frac{3}{2}})}
      \|\nabla\tilde{u}\|_{L^1_t(\dot{B}^{\frac{3}{2}}_{2,1})}+\|\tilde{u}\|_{\tilde{L}_t^\infty(\dot{B}^{\frac{1}{2}}_{2,1})}
      \|\bar{\theta}\|_{L^1_t(\dot{B}_{2,1}^{\frac{1}{2}})}
      \nonumber\\
    &&\quad+\int_0^t\|\nabla(a\nabla\tilde{\theta})\|_{\dot{B}_{2,1}^{-\frac{3}{2}}}d\tau+\int_0^t\|\nabla((a-\bar{a})\nabla\bar{\theta})\|_{\dot{B}_{2,1}^{-\frac{3}{2}}}d\tau
     \nonumber\\
    &&\lesssim
      \|\tilde{\theta}_0\|_{\dot{B}^{-\frac{3}{2}}_{2,1}}
      +\int_0^t\|\tilde{\theta}\|_{\tilde{L}_s^\infty(\dot{B}_{2,1}^{-\frac{3}{2}})}\|\nabla\bar{u}(s)\|_{\dot{B}^{\frac{3}{2}}_{2,1}}ds
      \nonumber\\
     && \quad +\|\tilde{\theta}\|_{\tilde{L}_t^\infty(\dot{B}_{2,1}^{-\frac{3}{2}})}
      \|\nabla\tilde{u}\|_{L^1_t(\dot{B}^{\frac{3}{2}}_{2,1})}+\|\tilde{u}\|_{\tilde{L}_t^\infty(\dot{B}^{\frac{1}{2}}_{2,1})}
      \|\bar{\theta}\|_{L^1_t(\dot{B}_{2,1}^{\frac{1}{2}})}\nonumber\\
    &&\quad+\|a\|_{\tilde{L}_t^\infty(\dot{B}_{2,1}^{3/2})}\|\tilde{\theta}\|_{L^1_t(\dot{B}_{2,1}^{\frac{1}{2}})}+\|\bar{a}\|_{\tilde{L}_t^\infty(\dot{B}_{2,1}^{3/2})}+\int_0^t\|a\|_{\tilde{L}_t^\infty(\dot{B}_{2,1}^{3/2})}\|\bar{\theta}\|_{\dot{B}_{2,1}^{\frac{1}{2}}}ds.
  \end{eqnarray}
Then the Gronwall inequality and (\ref{h8}) yield
\begin{eqnarray}\label{d20}
   &&\|\tilde{\theta}\|_{\tilde{L}_t^\infty(\dot{B}_{2,1}^{-\frac{3}{2}})}
     +\|\tilde{\theta}\|_{\tilde{L}_t^1(\dot{B}_{2,1}^{\frac{1}{2}})}\nonumber\\
   &&\lesssim
     \|\tilde{\theta}_0\|_{\dot{B}^{-\frac{3}{2}}_{2,1}}
     +\|\tilde{\theta}\|_{\tilde{L}_t^\infty(\dot{B}_{2,1}^{-\frac{3}{2}})}
     \|\tilde{u}\|_{L^1_t(\dot{B}^{\frac{5}{2}}_{2,1})}
     +\|\tilde{u}\|_{\tilde{L}_t^\infty(\dot{B}^{\frac{1}{2}}_{2,1})}
     \|\bar{\theta}\|_{\tilde{L}_t^1(\dot{B}_{2,1}^{\frac{1}{2}})}\nonumber\\
    &&\quad+\|a\|_{\tilde{L}_t^\infty(\dot{B}_{2,1}^{3/2})}\|\tilde{\theta}\|_{L^1_t(\dot{B}_{2,1}^{\frac{1}{2}})}
    +\|\bar{a}\|_{\tilde{L}_t^\infty(\dot{B}_{2,1}^{3/2})}+\int_0^t\|a\|_{\tilde{L}_t^\infty(\dot{B}_{2,1}^{3/2})}\|\bar{\theta}\|_{\dot{B}_{2,1}^{\frac{1}{2}}}ds.
\end{eqnarray}
 Let
\begin{eqnarray*}
    \tilde{Z}(t)
    =\|a\|_{\tilde{L}^\infty_t(\dot{B}^{\frac{3}{2}}_{2,1})}
    +\|\tilde{u}\|_{\tilde{L}^\infty_t(\dot{B}^{\frac{1}{2}}_{2,1})}
     +\|\tilde{u}\|_{\tilde{L}^1_t(\dot{B}^{\frac{5}{2}}_{2,1})}
     +\|\nabla\tilde{\Pi}\|_{\tilde{L}^1_t(\dot{B}^{\frac{1}{2}}_{2,1})}
     +\|\tilde{\theta}\|_{\tilde{L}^\infty_t(\dot{B}_{2,1}^{-\frac{3}{2}})}
     +\|\tilde{\theta}\|_{\tilde{L}^1_t(\dot{B}^{\frac{1}{2}}_{2,1})}.
\end{eqnarray*}
Combining (\ref{d18}) with (\ref{d20}), we obtain
\begin{eqnarray*}
    \tilde{Z}(t) \leq C_3(\|\tilde{a}_0\|_{\dot{B}^{\frac{3}{2}}_{2,1}}+\|\tilde{u}_0\|_{\dot{B}^{\frac{1}{2}}_{2,1}}
    +\|\tilde{\theta}_0\|_{\dot{B}^{-\frac{3}{2}}_{2,1}}+\|\bar{a}\|_{\tilde{L}_t^\infty(\dot{B}_{2,1}^{3/2})}
    +\tilde{Z}(t)^2),
\end{eqnarray*}
for some $C_3>0$. Then a similar derivation of (\ref{d13}) shows that if
\begin{eqnarray*}
    \|\tilde{a}_0\|_{\dot{B}^{\frac{3}{2}}_{2,1}}+\|\tilde{u}_0\|_{\dot{B}^{\frac{1}{2}}_{2,1}}+\|\tilde{\theta}_0\|_{\dot{B}^{-\frac{3}{2}}_{2,1}}+c_1
    \leq \frac{1}{4C_3^2},
\end{eqnarray*}
where $c_1$ gives in (\ref{a-t-estimate}), there holds
\begin{eqnarray}\label{d21}
    \tilde{Z}(t)
    \leq
    2C_3(\|\tilde{u}_0\|_{\dot{B}^{\frac{1}{2}}_{2,1}}
    +\|\tilde{\theta}_0\|_{\dot{B}^{-\frac{3}{2}}_{2,1}}), \quad t>0.
\end{eqnarray}
So we have
\begin{eqnarray}\label{q1}
   &&\|a\|_{\tilde{L}^\infty_t(\dot{B}^{\frac{3}{2}}_{2,1})}+\|\tilde{u}\|_{\tilde{L}^\infty(\mathbb{R}^+;\dot{B}^{\frac{1}{2}}_{2,1})}
   +\|\tilde{u}\|_{L^1(\mathbb{R}^+;\dot{B}^{\frac{5}{2}}_{2,1})}
+\|\nabla\tilde{\Pi}\|_{L^1(\mathbb{R}^+;\dot{B}_{2,1}^{\frac{1}{2}})}
   \nonumber\\
   &&+\|\tilde{\theta}\|_{\tilde{L}^\infty(\mathbb{R}^+;\dot{B}_{2,1}^{-\frac{3}{2}})}
   +\|\tilde{\theta}\|_{L^1(\mathbb{R}^+;\dot{B}^{\frac{1}{2}}_{2,1})}
   \lesssim \varepsilon_1.
\end{eqnarray}
With (\ref{q1}), we can prove the propagation of regularity for smoother initial data. Similar as the proof in (\ref{smooth-a-estimate}), (\ref{smooth-theta-estimate}) and (\ref{smooth-u-estimate}), combining (\ref{h7})-(\ref{h8}), (\ref{a-t-estimate}) and (\ref{q1}), we easily get the
\begin{eqnarray*}
a\in \tilde{L}^\infty(\dot{B}^{\frac{5}{2}}_{2,1}),\quad \theta\in \tilde{L}^\infty(\mathbb{R}^+;\dot{B}_{2,1}^{-\frac{1}{2}}),\quad u\in \tilde{L}^\infty(\mathbb{R}^+;\dot{B}^{\frac{3}{2}}_{2,1})
\end{eqnarray*}
because $\tilde{a}=
a-\bar{a}, \tilde{u}=
u-\bar{u}, \ \tilde{\theta}=\theta-\bar{\theta}$. Noticing that$(a, \theta, u)\in (L^2)^3$, this along with the above result shows that
$$
a\in \tilde{L}^\infty(\mathbb{R}^+;B_{2,1}^{\frac{5}{2}}),\quad\theta\in \tilde{L}^\infty(\mathbb{R}^+;B_{2,1}^{-\frac{1}{2}}),\quad u\in \tilde{L}^\infty(\mathbb{R}^+;B^{\frac{3}{2}}_{2,1}).
$$
Then a standard interpolation between the results Lemma \ref{4.4} and (\ref{q1}) implies (\ref{more}). This completes the proof
of Theorem \ref{1.2} (a).

We can follow the standard energy method \cite{L-L} to get the proof of Theorem \ref{1.2} (b).
$\hfill\Box$\\


\end{document}